\theoremstyle{plain}
\newtheorem{theorem}{\bf Theorem}[section]
\newtheorem{lemma}[theorem]{\bf Lemma}
\newtheorem{corollary}[theorem]{\bf Corollary}
\theoremstyle{definition}
\newtheorem{definition}[theorem]{\bf Definition}
\newtheorem{remark}[theorem]{\bf Remark}
\newcommand{\eqa}[1]{
\begin{align*}
#1
\end{align*}}
\newcommand{\nai}[2]{\langle #1,#2\rangle}
\title{Ultraproducts, QWEP von Neumann Algebras,
and the 
Effros-Mar\'echal Topology}
\author{Hiroshi ANDO \and Uffe HAAGERUP \and Carl WINSL\O W}
\begin{document}
\maketitle
\begin{abstract}
Based on the analysis on the Ocneanu/Groh-Raynaud ultraproducts and the Effros-Mar\'echal topology on the space vN$(H)$ of von Neumann algebras acting on a separable Hilbert space $H$, we show that for a von Neumann algebra $M\in \text{vN}(H)$, the following conditions are equivalent: (1) $M$ has the Kirhcberg's quotient weak expectation property (QWEP). (2) $M$ is in the closure of the set $\mathcal{F}_{\rm{inj}}$ of injective factors on $H$ with respect to the Effros-Mar\'echal topology. (3) $M$ admits an embedding $i$ into the Ocneanu ultrapower $R_{\infty}^{\omega}$ of the injective III$_1$ factor $R_{\infty}$ with a normal faithful conditional expectation $\varepsilon\colon R_{\infty}^{\omega}\to i(M)$. (4) For every $\varepsilon>0, n\in \mathbb{N}$ and $\xi_1,\cdots,\xi_n\in \mathcal{P}_M^{\natural}$, there is $k\in \mathbb{N}$ and $a_1,\cdots,a_n\in M_k(\mathbb{C})_+$, such that $|\nai{\xi_i}{\xi_j}-\text{tr}_k(a_ia_j)|<\varepsilon\ (1\le i,j\le n)$ holds, where tr$_k$ is the tracial state on $M_k(\mathbb{C})$, and $\mathcal{P}_M^{\natural}$ is the natural cone in the standard form of $M$.
\end{abstract}

\noindent

\medskip

\noindent

\medskip

\section{Introduction}
In the seminal paper \cite{Kirchberg}, Kirchberg revealed the unexpected connection among tensor products of C$^*$-algebras, the weak expectation property (WEP) of Lance \cite{Lance}, and the Connes' embedding conjecture for type II$_1$ factors. A C$^*$-algebra $A$ is said to have WEP, if for any faithful representation $A\subset \mathbb{B}(H)$, there is a unital completely positive (ucp) map $\Phi\colon \mathbb{B}(H)\to A^{**}$ such that $\Phi|_A=\text{id}_A$. A C$^*$-algebra $A$ is said to have the quotient weak expectation property (QWEP) if there is a surjective *-homomorphism from a C$^*$-algebra $B$ with WEP onto $A$. Among other interesting results, Kirchberg proved that the following conditions are equivalent:
\begin{itemize}
\item[(1)] $C^*(\mathbb{F}_{\infty})\otimes_{\text{min}}C^*(\mathbb{F}_{\infty})=C^*(\mathbb{F}_{\infty})\otimes_{\text{max}}C^*(\mathbb{F}_{\infty})$.
\item[(2)] Every C$^*$-algebra has QWEP.
\item[(3)] Every II$_1$ factor $N$ with separable predual embeds into the tracial ultrapower $R^{\omega}$ (for some fixed free ultrafilter $\omega$ on $\mathbb{N})$ of the hyperfinite II$_1$ factor $R$.  
\end{itemize}
(2) is now called Kirchberg's QWEP conjecture, and (3) is called the Connes' embedding conjecture, which was mentioned for the first time in \cite{Connes76}. See also the excellent survey \cite{Ozawa} on the QWEP conjecture. Nowadays many interesting equivalent conditions of the above conjectures are known.  
On the other hand, the QWEP conjecture also has another connection to the topological properties of the space of von Neumann algebras. The Effros-Mar\'echal topology on the space vN$(H)$ of von Neumann algebras acting on a fixed Hilbert space $H$ is the weakest topology on vN$(H)$ for which the map 
\[N\mapsto \|\varphi|_N\|\]
is continuous for every $\varphi\in \mathbb{B}(H)_*$. Based on the work of Effros \cite{Effros1,Effros2} on the Borel structure of vN$(H)$, this topology was defined by Mar\'echal in \cite{Marechal} so that it generates the Effros Borel structure. Later it was intensively studied in \cite{HW1,HW2}, where the second and third-named authors showed that this topology was indeed well-matched with Tomita-Takesaki theory and could be used as a tool for the study of global properties of von Neumann algebras. Among other things, it was proved \cite[Theorem 5.8]{HW2} that when $H$ is separable, a II$_1$ factor $N\in {\rm{vN}}(H)$ is in the closure of the set $\mathcal{F}_{\rm{inj}}$ of injective factors on $H$, if and only if $N$ embeds into $R^{\omega}$. Consequently, Connes' embedding conjecture (hence all conditions (1)-(3) above) is equivalent to
\begin{itemize}
\item[(4)] $\mathcal{F}_{\rm{inj}}$ is dense in $\text{vN}(H)$.
\end{itemize}
In this paper, we establish further connection among ultraproducts, the approximation by injective factors, and QWEP von Neumann algbebras. To this we make use of the recent work of the first and the second-named authors on the ultraproducts of general von Neumann algebras \cite{AndoHaagerup}. We also carry on the analysis of the natural cone in the standard form \cite{Haagerup1}. The main result of the paper is as follows. Let $R_{\infty}$ (resp. $R_{\lambda}$) denote the injective factor of type III$_1$ (resp. type III$_{\lambda}$\ $(0<\lambda<1)$). We assume that $H$ is separable and infinite-dimensional.
\begin{theorem}
Let $0<\lambda<1$ and let $M\in {\rm{vN}}(H)$, and let $\mathcal{P}_M^{\natural}$ be the natural cone in the standard form of $M$. The following conditions are equivalent.
\begin{itemize}
\item[{\rm{(1)}}] $M$ has {\rm{QWEP}}.
\item[{\rm{(2)}}] $M\in \overline{\mathcal{F}_{\rm{inj}}}$. 
\item[{\rm{(3)}}] There is an embedding $i\colon M\to R_{\infty}^{\omega}$ and a normal faithful conditional expectation $\varepsilon\colon R_{\infty}^{\omega}\to i(M)$.
\item[{\rm{(4)}}] There is an embedding $i\colon M\to R_{\lambda}^{\omega}$ and a normal faithful conditional expectation $\varepsilon\colon R_{\lambda}^{\omega}\to i(M)$.
\item[{\rm{(5)}}] There is $\{k_n\}_{n=1}^{\infty}\subset \mathbb{N}$ and a normal faithful state $\varphi_n$ on $M_{k_n}(\mathbb{C})\ (n\in \mathbb{N})$, an embedding $i\colon M\to (M_{k_n}(\mathbb{C}),\varphi_n)^{\omega}$ and a normal faithful conditional expectation $\varepsilon\colon  (M_{k_n}(\mathbb{C}),\varphi_n)^{\omega}\to i(M)$.
\item[{\rm{(6)}}] For every $\varepsilon>0, n\in \mathbb{N}$ and $\xi_1,\cdots,\xi_n\in \mathcal{P}_M^{\natural}$, there exist $k\in \mathbb{N}$ and $a_1,\cdots,a_n\in M_k(\mathbb{C})_+$ such that 
\[|\nai{\xi_i}{\xi_j}-{\rm{tr}}_k(a_ia_j)|<\varepsilon\ \ \ \ \ (1\le i,j\le n).\]
\end{itemize}
\end{theorem}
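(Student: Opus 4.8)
The plan is to prove the six conditions equivalent through a web of implications organised around $(1)$ and $(5)$, handling first the ``soft'' half in which every other condition implies $(1)$. The only ingredient there is the permanence of QWEP: $M_k(\mathbb{C})$, $R_\infty$ and $R_\lambda$ are injective, hence QWEP; by the Ando-Haagerup theory of ultraproducts \cite{AndoHaagerup} the Ocneanu ultrapower (and, more generally, the ultraproduct $(M_{k_n}(\mathbb{C}),\varphi_n)^\omega$) of a QWEP von Neumann algebra is again QWEP; and QWEP passes to any von Neumann subalgebra that is the range of a normal faithful conditional expectation. This gives $(3),(4),(5)\Rightarrow(1)$ at once. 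It also gives $(2)\Rightarrow(1)$, once one invokes the extension to arbitrary von Neumann algebras --- available through \cite{AndoHaagerup} --- of the dictionary from \cite{HW1,HW2}: if $M\in\overline{\mathcal{F}_{\rm inj}}$ then, along a suitable sequence of injective factors and a suitable ultrafilter, $M$ embeds into their Groh-Raynaud ultraproduct with a normal faithful conditional expectation, and that ultraproduct is QWEP.

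For the reverse half I would begin with $(1)\Rightarrow(5)$, which is essentially Kirchberg's structure theorem for QWEP \cite{Kirchberg} transcribed into the Ando-Haagerup framework: a QWEP von Neumann algebra $M$ with separable predual and faithful normal state $\varphi$, being a quotient of a WEP C$^*$-algebra, admits a unital $*$-embedding $i$ into an ultraproduct $P=(M_{k_n}(\mathbb{C}),\varphi_n)^\omega$ together with a normal faithful conditional expectation $P\to i(M)$; the local liftability built into WEP produces the completely positive maps into and out of the matrix algebras, and the modular theory of \cite{AndoHaagerup} is what makes the resulting projection a \emph{normal} conditional expectation. Then $(5)\Rightarrow(2)$: $i(M)$ is a separable von Neumann subalgebra of $P$ that is the range of a normal faithful conditional expectation, so by the dictionary of \cite{HW1,HW2} it lies in the closure of $\{M_{k_n}(\mathbb{C})\}_n\subseteq\mathcal{F}_{\rm inj}$; since $\overline{\mathcal{F}_{\rm inj}}$ depends only on the isomorphism class of a separably acting von Neumann algebra on $H$ (e.g.\ it is stable under $M\leftrightarrow M\bar\otimes\mathbb{B}(H)$, cf.\ \cite{HW2}), the isomorphism $M\cong i(M)$ gives $M\in\overline{\mathcal{F}_{\rm inj}}$. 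Together with the soft half this closes $(1)\Leftrightarrow(2)\Leftrightarrow(5)$; alternatively $(1)\Rightarrow(2)$ can be reduced to the type II$_1$ case \cite[Theorem 5.8]{HW2} by passing to the continuous core and applying Takesaki duality, using the stability of $\overline{\mathcal{F}_{\rm inj}}$ under crossed products by amenable groups.

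To bring in the specific injective factors of $(3)$ and $(4)$, I would run the $(1)\Rightarrow(5)$ argument with a target tailored by Connes' classification of injective factors. By the Powers/Araki-Woods presentation $R_\lambda\cong\bigotimes_{n}(M_2(\mathbb{C}),\varphi_\lambda)$ together with the Ando-Haagerup description of the Ocneanu ultrapower of an ITPFI factor, $R_\lambda^\omega$ can be realised as an ultraproduct $(M_{k_n}(\mathbb{C}),\varphi_n)^\omega$, and the flexibility in the choice of matrix approximants in Kirchberg's construction lets one land $i(M)$ inside $R_\lambda^\omega$ with a normal faithful conditional expectation; this yields $(1)\Leftrightarrow(4)$. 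Then $(4)\Leftrightarrow(3)$ follows from $R_\infty\cong R_\lambda\bar\otimes R_\mu$ when $\log\lambda/\log\mu\notin\mathbb{Q}$, together with normal-expectation embeddings of $R_\mu$ into $R_\lambda^\omega$ and of $R_\lambda$ into $R_\infty^\omega$, which come from injectivity and the factoriality analysis of \cite{AndoHaagerup}. For $(5)\Rightarrow(6)$ I would use the analysis of the natural cone: the pair $(i,\varepsilon)$ induces an isometry $L^2(M)\hookrightarrow L^2(P)$ carrying $\mathcal{P}_M^{\natural}$ into $\mathcal{P}_P^{\natural}$; cone vectors of $P=(M_{k_n}(\mathbb{C}),\varphi_n)^\omega$ are represented by bounded sequences of cone vectors of the $(M_{k_n}(\mathbb{C}),\varphi_n)$; and in the symmetric standard form of $(M_k(\mathbb{C}),\varphi)$ the natural cone is literally $M_k(\mathbb{C})_+$, with $\langle a,b\rangle={\rm tr}_k(ab)$ up to normalisation. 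Pulling $\xi_1,\dots,\xi_n$ back along the isometry and truncating the ultraproduct at a large index then produces the required $a_i\in M_k(\mathbb{C})_+$.

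The delicate point, which I expect to be the main obstacle, is the remaining implication $(6)\Rightarrow(1)$. A cone-preserving isometry of $L^2(M)$ into the standard Hilbert space of a matricial ultraproduct exists by $(6)$, but it need not intertwine the modular operators --- for a tracial target it cannot, unless $M$ is finite --- so it does not by itself deliver a conditional expectation, and one must instead feed the purely finite-dimensional Gram-matrix data of $(6)$ into the natural-cone characterisation of QWEP developed here on the basis of \cite{Haagerup1}: from a countable dense subset of $\mathcal{P}_M^{\natural}$ together with the approximants of $(6)$ (letting $\varepsilon\to0$) one assembles the approximate matricial factorisations that exhibit $M$ as a quotient of a WEP algebra. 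The other obstacles are technical rather than conceptual: keeping normal the conditional expectations produced from the Ando-Haagerup ultraproduct in the genuinely non-tracial case, and establishing the ``expected separable subalgebra of an ultraproduct $\Longleftrightarrow$ Effros-Mar\'echal limit'' dictionary in the full generality of von Neumann algebras acting on $H$.
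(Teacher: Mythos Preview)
Your soft direction $(3),(4),(5)\Rightarrow(1)$ is fine, and the dictionary you invoke for $(5)\Rightarrow(2)$ is essentially Theorem~\ref{UW4.1.3} of this paper (you flag it as a technical obstacle, which is fair). The overall architecture is close to the paper's, but there are two genuine gaps.

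\textbf{The implication $(1)\Rightarrow(5)$.} You write that this ``is essentially Kirchberg's structure theorem for QWEP transcribed into the Ando--Haagerup framework.'' There is no such theorem in \cite{Kirchberg}: Kirchberg's embedding result (his Theorem~4.1) applies only to \emph{finite} von Neumann algebras, producing an embedding into $R^\omega$. For a general $M$ --- in particular for type III --- there is no known way to extract, directly from the WEP-quotient presentation, a normal faithful conditional expectation from a matricial Ocneanu ultraproduct onto $M$. Your alternative route, ``reduce to the II$_1$ case by passing to the continuous core,'' does not work as stated either: the core $M\rtimes_{\sigma^\varphi}\mathbb{R}$ is only semifinite (type II$_\infty$), so Kirchberg's finite-case theorem still does not apply. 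The paper's solution is to take instead the \emph{discrete} crossed product $N=M\rtimes_{\sigma^\varphi}\mathbb{Z}[\tfrac12]$ and invoke the Haagerup--Junge--Xu reduction \cite{HJX}: $N$ is an increasing union of genuinely \emph{finite} $\sigma^{\widehat\varphi}$-invariant subalgebras $N_n$ with normal faithful conditional expectations; each $N_n$ is QWEP, hence embeds in $R^\omega$ by Kirchberg, hence lies in $\overline{\mathcal{F}_{\rm inj}}$; then $N_n\to N$ in the Effros--Mar\'echal topology, and one descends back to $M$. This HJX step is the missing idea in your argument.

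\textbf{The implication $(6)\Rightarrow(1)$.} You correctly build a linear isometry $\rho\colon L^2(M)\to H_\omega$ with $\rho(\mathcal{P}_M^\natural)\subset\mathcal{P}_\omega^\natural$ and correctly note that it need not intertwine modular data, so no conditional expectation is forthcoming. But your proposed next step --- ``assemble approximate matricial factorisations exhibiting $M$ as a quotient of a WEP algebra'' --- is a restatement of the goal, not a mechanism. The paper's key observation (Lemma~\ref{lem: keylemma}) is that a cone-preserving isometry alone suffices: by Araki's order isomorphism $M_{\rm sa}\cap[-\alpha 1,\alpha 1]\cong\{\xi\in H_{\rm sa}:-\alpha\xi_\varphi\le\xi\le\alpha\xi_\varphi\}$, the maps $\rho$ and $\rho^*$ transport to \emph{unital positive} (not $*$-homomorphic) maps $i\colon N\to M$ and $\varepsilon\colon M\to N$ with $\varepsilon\circ i=\mathrm{id}_N$; then $\varepsilon$ is a contraction mapping $\overline{\mathrm{Ball}}(M)$ onto $\overline{\mathrm{Ball}}(N)$, and the Kirchberg--Ozawa criterion (Lemma~\ref{lem: Kirchberg reduction}) gives QWEP for $N$. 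This passage through Araki's order structure, rather than through modular theory, is the idea you are missing.
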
     
Here, $(N_n,\varphi_n)^{\omega}$ denotes the Ocneanu ultraproduct of $\{N_n\}_{n=1}^{\infty}$ with respect to the sequence of normal faithful states $\{\varphi_n\}_{n=1}^{\infty}$ (see $\S2$), and the embedding is understood to be a unital normal injective *-homomorphism.\\
The organization of the paper is as follows. In $\S2$ we recall necessary backgrounds about ultraproducts, Effros-Mar\'echal topology. In $\S$3, we prove the relation between embedding into the Ocneanu ultraproduct and the limit in the Effros-Mar\'echal topology. Using this, we prove the equivalence of (1)-(5) in the above theorem. In $\S$4, we prove the equivalence of (6) and (1) in the above theorem.  
\section{Preliminaries}
\subsection{Notation}
Throughout the paper, $\omega \in \beta \mathbb{N}\setminus \mathbb{N}$ denotes a fixed free ultrafilter on $\mathbb{N}$. $H$ denotes a separable Hilbert space and ${\rm{vN}}(H)$ the space of all von Neumann algebras acting on $H$ (we assumed all von Neumann algebras contain $1={\rm{id}}_H$). Let $M$ be a von Neumann algebra, $\varphi$ be a normal state on $M$. As usual, we define two (semi-)norms $\|\cdot \|_{\varphi},\ \|\cdot \|_{\varphi}^{\sharp}$ by
\[\|x\|_{\varphi}:=\varphi(x^*x)^{\frac{1}{2}},\ \|x\|_{\varphi}^{\sharp}:=\varphi \left (x^*x+xx^*\right )^{\frac{1}{2}},\ x\in M.\]
For a sequence $(M_n)_n$ of von Neumann algebras, $\ell^{\infty}(\mathbb{N},M_n)$ is the C$^*$-algebra of all norm-bounded seuquences $(x_n)_n\in \prod_{n\in \mathbb{N}}M_n$.
\subsection{Natural cone $\mathcal{P}_M^{\natural}$} 
 Recall the definition of the standard form.
\begin{definition}\label{def: standard form}
Let $(M,H,J,\mathcal{P}_M^{\natural})$ be a quadruple, where $M$ is a von Neumann algebra, $H$ is a Hilbert space on which $M$ acts, $J$ is an antilinear isometry on $H$ with $J^2=1$, and $\mathcal{P}_M^{\natural}\subset H$ is a closed convex cone which is self-dual, i.e., $\mathcal{P}_M^{\natural}=(\mathcal{P}_{M}^{\natural})^0$, where
\[(\mathcal{P}_M^{\natural})^0:=\{\xi \in H;\ \nai{\xi}{\eta}\ge 0,\ \eta\in \mathcal{P}_M^{\natural}\}.\]
Then $(M,H,J,\mathcal{P}_M^{\natural})$ is called a {\it standard form} if the following conditions are satisfied:
\begin{itemize}
\item[1.] $JMJ=M'$.
\item[2.] $J\xi=\xi,\ \xi\in \mathcal{P}_M^{\natural}$.
\item[3.] $xJxJ(\mathcal{P}_M^{\natural})\subset \mathcal{P}_M^{\natural},\ x\in M$.
\item[4.] $JxJ=x^*,\ x\in \mathcal{Z}(M)$.
\end{itemize}
\end{definition}
We remark that condition 4. automatically follows from the other three conditions \cite[Lemma 3.19]{AndoHaagerup}. The existence and the uniqueness of the standard form was established in \cite{Haagerup1}. $\mathcal{P}^{\natural}_M$ is called the {\it natural cone} of $M$. It was independently introduced by Connes \cite{Connes73} and Araki \cite{Araki}, and if $M$ is $\sigma$-finite with a normal faithful state $\varphi$, then on the GNS Hilbert space $H=L^2(M,\varphi)$, $\mathcal{P}_M^{\natural}$ is realized as
\[\mathcal{P}_M^{\natural}=\overline{\{aJ_{\varphi}aJ_{\varphi}\xi_{\varphi};a\in M\}}=\overline{\left \{\Delta_{\varphi}^{\frac{1}{4}}a\xi_{\varphi};a\in M_+\right \}}.\]
Here, $\Delta_{\varphi}\ ({\rm{resp.}}\ J_{\varphi})$ is the modular (resp. modular conjugation) operator of $\varphi$.  $\mathcal{P}_M^{\natural}$ induces a partial order on $H$ by 
\[\xi\le \eta\Leftrightarrow \xi-\eta\in \mathcal{P}_M^{\natural},\]
and for any vector $\xi\in H_{\rm{sa}}:=\{\eta\in H;\ J\eta=\eta\}$, there exist unique vectors $\xi_+,\xi_-\in \mathcal{P}_M^{\natural}$ such that $\xi=\xi_+-\xi_-$ and $\xi_+\perp \xi_-$. Moreover, $H$ is linearly spanned by $\mathcal{P}_M^{\natural}$.   
This order structure was intensively studied in \cite{Connes74,Araki}. Among others, it holds that \cite{Araki} every positive $\varphi \in M_*$ is represented as $\varphi=\omega_{\xi_{\varphi}}=\nai{\ \cdot\ \xi_{\varphi}}{\xi_{\varphi}}$ by a unique vector $\xi_{\varphi}\in \mathcal{P}_M^{\natural}$ and we have the Araki-Powers-St\o rmer inequality: 
\[\|\xi_{\varphi}-\xi_{\psi}\|^2\le \|\varphi-\psi\|\le \|\xi_{\varphi}-\xi_{\psi}\|\|\xi_{\varphi}+\xi_{\psi}\|.\]
The readers are referred to the above papers for more detailed information on the natural cone. 
\subsection{The Ocneanu and the Groh-Raynaud Ultraproduct}
Let $(M_n,\varphi_n)_n$ be a sequence of pairs of $\sigma$-finite von Neumann algebras equipped with normal faithful states. Let $\mathcal{I}_{\omega}:=\mathcal{L}_{\omega}\cap \mathcal{L}_{\omega}^*$, where
\[\mathcal{L}_{\omega}=\mathcal{L}_{\omega}(M_n,\varphi_n):=\{(x_n)_n\in \ell^{\infty}(\mathbb{N},M_n); \|x_n\|_{\varphi_n}\stackrel{n\to \omega}{\to}0\},\]
and let $\mathcal{M}^{\omega}$ be the multiplier algebra of $\mathcal{I}_{\omega}$. The {\it Ocneanu ultraproduct} $(M_n,\varphi_n)^{\omega}$ of $(M_n,\varphi_n)_n$ is the quotient algebra $\mathcal{M}^{\omega}/\mathcal{I}_{\omega}$. For each $n\in \mathbb{N}$, let $H_n=L^2(M_n,\varphi_n)$ be the GNS Hilbert space of $(M_n,\varphi_n)$, and let $H_{\omega}$ be the ultraproduct of $(H_n)_n$. Define a *-representation $\pi_{\omega}\colon \ell^{\infty}(\mathbb{N},M_n)\to \mathbb{B}(H_{\omega})$ by 
\[\pi_{\omega}((x_n)_n)(\xi_n)_{\omega}:=(x_n\xi_n)_{\omega},\ \ \ \ (x_n)_n\in \ell^{\infty}(\mathbb{N},M_n),\ (\xi_n)_{\omega}\in H_{\omega}.\]
The {\it Groh-Raynaud ultraproduct} $\prod^{\omega}M_n$ is the von Neumann algebra $\pi_{\omega}(\ell^{\infty}(\mathbb{N},M_n))''$ acting on $H_{\omega}$. Note that the definition of $\pi_{\omega}$ is slightly different from the one given in \cite[$\S$3]{AndoHaagerup}. 
Let $\xi_n\in H_n$ be the cyclic vector corresponding to $\varphi_n$, and let $\xi_{\omega}=(\xi_n)_{\omega}\in H_{\omega}$. Let $p\in \prod^{\omega}M_n$ be the support projection of the normal state $\nai{\ \cdot\ \xi_{\omega}}{\xi_{\omega}}$ on $\prod^{\omega}M_n$. Then by \cite[Proposition 3.15]{AndoHaagerup},   
$(M_n,\varphi_n)^{\omega}\cong p(\prod^{\omega}M_n)p$. For more details about ultraproducts, see \cite{AndoHaagerup}. 
\subsection{Effros-Mar\'echal Topology on ${\rm{vN}}(H)$}
As explained in the introduction, the {\it Effros-Mar\'echal topology} is the weakest topology on ${\rm{vN}}(H)$ which makes each functional ${\rm{vN}}(H)\ni N\mapsto \|\varphi|_N\|$ continuous $(\varphi \in \mathbb{B}(H)_*)$. As $H$ is separable, this makes ${\rm{vN}}(H)$ a Polish space, and the the topology can be described by the concepts of limsup and liminf of a sequence in ${\rm{vN}}(H)$ (these notions are refinements \cite[Remark 2.11]{HW1} of s-liminf and w-limsup given in \cite{Tsukada}). Since we only consider the case where $H$ is separable, we use the following simplified equivalent definitions (see \cite[$\S$2]{HW1} for the original definition).
\begin{definition}
Let $(M_n)_n\subset {\rm{vN}}(H)$.  
\begin{itemize}
\item[(1)] $\displaystyle \liminf_{n\to \infty}M_n$ is the set of all $x\in \mathbb{B}(H)$ for which there exists $(x_n)_n\in \ell^{\infty}(\mathbb{N},M_n)$ that converges to $x$ *-strongly.
\item[(2)] $\displaystyle \limsup_{n\to \infty}M_n$ is the von Neumann algebra generated by the set of all $x\in \mathbb{B}(H)$ for which there exists $(x_n)_n\in \ell^{\infty}(\mathbb{N},M_n)$ such that $x$ is a weak-limit point of $\{x_n\}_{n=1}^{\infty}$. 
\end{itemize}
\end{definition} 
It is proved in \cite[Theorems 2.8, 3.5]{HW1} that for a sequence $\{M_n\}_{n=1}^{\infty}\subset {\rm{vN}}(H)$ and $M\in {\rm{vN}}(H)$, 
\eqa{
\lim_{n\to \infty}M_n=M\text{\ in\ }{\rm{vN}}(H)&\Leftrightarrow \liminf_{n\to \infty}M_n=M=\limsup_{n\to \infty}M_n,\\
\left (\limsup_{n\to \infty}M_n\right )'&=\liminf_{n\to \infty}M_n'.
}
We will also make use of the ultralimit version of the above concepts.
\begin{definition}
Let $(M_n)_n\subset {\rm{vN}}(H)$. We define 
\begin{itemize}
\item[(1)] $\displaystyle \liminf_{n\to \omega}M_n$ is the set of all $x\in \mathbb{B}(H)$ for which there exists $(x_n)_n\in \ell^{\infty}(\mathbb{N},M_n)$ such that $\displaystyle x=\text{so}^*\text{- }\lim_{n\to \omega}x_n$.
\item[(2)] $\displaystyle \limsup_{n\to \omega}M_n$ is the von Neumann algebra generated by the set of all $x\in \mathbb{B}(H)$ for which there exists $(x_n)_n\in \ell^{\infty}(\mathbb{N},M_n)$ such that $\displaystyle x=\text{wo -}\lim_{n\to \omega}x_n$.  
\end{itemize}
\end{definition}
Again they are different from the original definitions. It also holds that \cite[Lemma 5.2]{HW2} 
\eqa{
M=\lim_{n\to \omega}M_n\text{\ in\ vN}(H)&\Leftrightarrow \liminf_{n\to \omega}M_n=M=\limsup_{n\to \omega}M_n,\\
\left (\limsup_{n\to \omega}M_n\right )'&=\liminf_{n\to \omega}M_n'.
}
Finally, we will consider the following subsets of vN$(H)$ (see the introduction of \cite{HW2} for more details):
\begin{itemize}
\item $\mathcal{F}$, the set of factors acting on $H$.
\item $\mathcal{F}_X$, the set of factors of type $X$ acting on $H$, where $X$ runs among the standard numberings of the types of factors, such as II$_1$ or III$_{\lambda}$.
\item $\mathcal{F}_{\rm{inj}}$, the set of injective factors acting on $H$.
\item $\mathcal{F}^{\rm{st}}$, the set of factors acting standardly on $H$.
\item ${\rm{SA}}(M)$ the set of von Neumann subalgebras of $M\in \text{vN}(H)$.  
\end{itemize}
For a fixed normal faithful state $\varphi$ on $M$, we denote ${\rm{SA}}_{\varphi}(M)$ to be the set of those $N\in {\rm{SA}}(M)$ satisfying $\sigma_t^{\varphi}(N)=N\ (t\in \mathbb{R})$ (see \cite[$\S$2]{HW1}).  
\section{Embedding into the Ocneanu Ultraproducts and Effros-Mar\'echal Topology}
In this section, we consider a separable infinite-dimensional Hilbert space $H$. An {\it embedding} is understood to be a unital normal injective *-homomorphism of one von Neumann algebra into another. The main technical part of this section deals with versions of the following statements about von Neumann algebras $M,N\in {\rm{vN}}(H)$:
\begin{itemize}
\item[1.] There is a sequence $(\psi_n)_n\subset S_{\text{nf}}(M)$, an embedding of $i\colon N\to (M,\psi_n)^{\omega}$ and a faithful normal conditional expectation $\varepsilon$ of $(M,\psi_n)^{\omega}$ onto $i(N)$;
\item[2.] There is a separable Hilbert space $K$, a sequence of isomorphic copies $M_n\in \text{vN}(K)$ of $M$ and an isomorphic copy $N_0\in \text{vN}(K)$ of $N$ such that $M_n\to N_0$ in the Effros-Mar\'echal topology;
\end{itemize}
We  prove that 1. is equivalent to 2. (Theorem \ref{UW4.1.1} and Theorem \ref{UW4.1.3}) As an application we give a new characterization of the closure of the set of injective factors in $\text{vN}(H)$ (see \cite[\S 4-5]{HW2} for more details). 
\subsection{Approximation Theorem and The Closure of Injective Factors.}
We begin by the following
\begin{theorem}\label{UW4.1.1} Let $N\in {\rm{vN}}(H)$. Assume we are given a sequence $(M_n)\subset {\rm{vN}}(H)$ such that $M_n\to N$ in {\rm{vN}}$(H)$. Fix $\chi \in S_{\rm{nf}}(\mathbb{B}(H))$, and let $\psi_n:=\chi|_{M_n}, \varphi:=\chi|_N$. Then there exists an embedding $i$ of $N$ into $(M_n,\psi_n)^{\omega}$ such that $\varphi=(\psi_n)^{\omega}\circ i$, and a normal faithful conditional expectation $\varepsilon$ of $(M_n,\psi_n)^{\omega}$ onto $i(N)$ such that $(\psi_n)^{\omega}\circ \varepsilon =(\psi_n)^{\omega}$. 
\end{theorem}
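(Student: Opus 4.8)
The plan is to build the embedding $i$ and the conditional expectation $\varepsilon$ directly from the hypothesis $M_n \to N$ in $\mathrm{vN}(H)$, using the characterization $\liminf_{n\to\infty} M_n = N = \limsup_{n\to\infty} M_n$ together with the commutant relation $(\limsup_n M_n)' = \liminf_n M_n'$. First I would construct the map $i$. Take the GNS data: let $H_n = L^2(M_n,\psi_n)$ with cyclic vector $\xi_n$ and let $H_\omega = \prod^\omega H_n$. There is a natural unitary-type comparison between the ultraproduct Hilbert space $H_\omega$ and the original $H$: since $\chi \in S_{\mathrm{nf}}(\mathbb B(H))$, one has $\psi_n = \chi|_{M_n}$ and $\varphi = \chi|_N$ are the restrictions of a single vector state (or at least a faithful normal state) on $\mathbb B(H)$, and the point is that $\chi$ witnesses the convergence at the level of the standard vectors. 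Concretely, because $M_n \to N$, for each $x \in N$ one can (using $N = \liminf_n M_n$) choose $x_n \in M_n$ with $x_n \to x$ $*$-strongly and $\sup_n\|x_n\| \le \|x\|$; one checks $(x_n)_n \in \mathcal M^\omega$ and sets $i(x) := (x_n)_n + \mathcal I_\omega$. The key verifications are: (a) $i(x)$ is independent of the choice of representing sequence modulo $\mathcal I_\omega$ (because two such sequences differ by a sequence $*$-strongly convergent to $0$, which lies in $\mathcal L_\omega \cap \mathcal L_\omega^*$ since $\psi_n = \chi|_{M_n}$ is implemented by a vector on $H$); (b) $i$ is a normal $*$-homomorphism, which follows since $*$-strong limits are multiplicative on bounded sets; (c) $i$ is isometric, hence injective, which will come from $\varphi = (\psi_n)^\omega \circ i$ and faithfulness of $\varphi$. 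The relation $\varphi = (\psi_n)^\omega \circ i$ itself is immediate: $(\psi_n)^\omega(i(x)) = \lim_{n\to\omega}\psi_n(x_n) = \lim_{n\to\omega}\chi(x_n) = \chi(x) = \varphi(x)$, using $\chi$-continuity and $x_n \to x$ in a topology on which $\chi$ is continuous (weak$^*$ suffices on bounded sets).

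Next I would produce the conditional expectation. The idea is to realize $(M_n,\psi_n)^\omega$ concretely on $H_\omega$ as $p(\prod^\omega M_n)p$ (by \cite[Proposition 3.15]{AndoHaagerup}), and then use the Groh--Raynaud picture. On $H_\omega$ we have the representation $\pi_\omega$ of $\ell^\infty(\mathbb N, M_n)$, and the image $i(N)$ corresponds (after the $p(\cdot)p$ identification) to $\pi_\omega$ of sequences $(x_n)_n$ with $x_n \to x \in N$ $*$-strongly. The conditional expectation will be obtained by first constructing a norm-one projection from $\prod^\omega M_n$ onto the copy of $N$ living inside it: here I expect to use a standard averaging/operator-convexity argument combined with the commutant relation. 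More precisely, the commutant of the copy of $N$ inside $B(H_\omega)$ contains a large algebra coming from $\liminf_n M_n'$, and one uses the fact that $N$ sits as a "diagonal" subalgebra of $\prod^\omega M_n$ whose relative commutant is rich enough to average onto it; alternatively, and more robustly, one uses the standard form: $N = \liminf_n M_n$ and $N' = \limsup_n M_n'$ (or the reverse), so on $H_\omega$ both $N$ and a complementary algebra act, and Tomita--Takesaki theory gives a $\varphi$-preserving (hence normal, faithful) conditional expectation onto $i(N)$ precisely when $i(N)$ is globally invariant under the modular automorphism group of $(\psi_n)^\omega$. This last invariance should follow because $\sigma_t^{(\psi_n)^\omega}$ is implemented by $(\sigma_t^{\psi_n})_n$ (a theorem on modular flow of Ocneanu ultraproducts, available from \cite{AndoHaagerup}), and $\sigma_t^{\psi_n}$ asymptotically preserves the approximating sequences because $\psi_n \to \varphi$ in the appropriate sense; so $\sigma_t^{(\psi_n)^\omega}(i(x)) \in i(N)$ for $x \in N$, since $\sigma_t^{\psi_n}(x_n) \to \sigma_t^\varphi(x)$ $*$-strongly. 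Granting $\sigma_t^{(\psi_n)^\omega}(i(N)) = i(N)$ and $\varphi = (\psi_n)^\omega \circ i$ with $\varphi$ faithful normal, Takesaki's theorem yields the desired $\varepsilon$ with $(\psi_n)^\omega \circ \varepsilon = (\psi_n)^\omega$.

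The main obstacle, and the step I would spend most care on, is the modular-invariance claim $\sigma_t^{(\psi_n)^\omega}(i(N)) = i(N)$ — or whatever is used in its place to invoke Takesaki's theorem. It is \emph{not} obvious that a $*$-strongly convergent bounded sequence $x_n \to x$ can be chosen so that $\sigma_t^{\psi_n}(x_n)$ is again $*$-strongly convergent for all $t$ simultaneously; naive choices need not be modular-compatible. One route around this is to first establish the result not for arbitrary $x \in N$ but for $x$ in the $\sigma^\varphi$-analytic (or Tomita) algebra, where the modular flow acts nicely and the approximation can be done via functional calculus; then extend by density and normality. Another subtlety is checking that the representing sequences actually land in the multiplier algebra $\mathcal M^\omega$ of $\mathcal I_\omega$ (not merely in $\ell^\infty$), which requires $(x_n)_n \mathcal I_\omega \subseteq \mathcal I_\omega$ and $\mathcal I_\omega (x_n)_n \subseteq \mathcal I_\omega$; the $*$-strong convergence $x_n \to x$ plus uniform boundedness should give exactly this, using that $\|\cdot\|_{\psi_n}$ and $\|\cdot\|_{\psi_n}^\sharp$ are controlled by the fixed vector state $\chi$ on $H$. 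Once these two technical points — membership in $\mathcal M^\omega$ and modular invariance — are nailed down, the rest (normality, faithfulness, the trace-like identities $\varphi = (\psi_n)^\omega \circ i$ and $(\psi_n)^\omega\circ\varepsilon = (\psi_n)^\omega$) is routine and follows the standard pattern for ultraproduct embeddings.
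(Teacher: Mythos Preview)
Your construction of the embedding $i$ matches the paper's exactly: pick $*$-strong approximants $x_n \to x$ from $N = \liminf_n M_n$, observe $(x_n)_n \in \mathcal{M}^\omega$ by joint $*$-strong continuity of the product on bounded sets, set $i(x) = (x_n)^\omega$, and read off $(\psi_n)^\omega \circ i = \varphi$ from $\psi_n = \chi|_{M_n}$.

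For the conditional expectation, however, you take a substantially harder route than the paper does, and you leave the key step as an unresolved obstacle. The paper constructs $\varepsilon$ \emph{directly}: given $(x_n)^\omega \in (M_n,\psi_n)^\omega$, put $x := \text{wo-}\lim_{n\to\omega} x_n$; this lies in $\limsup_n M_n = N$, and one defines $\varepsilon((x_n)^\omega) := i(x)$. That $\varepsilon \circ i = i$ is immediate (if $x_n \to x$ $*$-strongly then the wo-limit along $\omega$ is $x$), and $(\psi_n)^\omega \circ \varepsilon = (\psi_n)^\omega$ is a one-line computation using $\chi(x) = \lim_{n\to\omega}\chi(x_n)$. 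Normality and faithfulness of $\varepsilon$ then come for free from the faithfulness of $(\psi_n)^\omega$. No Groh--Raynaud picture, no modular theory, no Takesaki theorem is needed here.

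Your proposed route via $\sigma_t^{(\psi_n)^\omega}(i(N)) = i(N)$ and Takesaki's theorem is not wrong in principle --- modular invariance of $i(N)$ is indeed a \emph{consequence} of the theorem once proved --- but establishing it a priori requires the continuity statement $\sigma_t^{\psi_n}(x_n) \to \sigma_t^\varphi(x)$ $*$-strongly, which you correctly identify as the main obstacle and do not resolve. That is a genuinely delicate fact about the compatibility of modular flow with Effros--Mar\'echal convergence; it can be extracted from the analysis in \cite{HW1}, but invoking it here is much heavier than the paper's two-line use of $N = \limsup_n M_n$. The moral: the hypothesis $M_n \to N$ gives you \emph{both} $\liminf$ and $\limsup$ equal to $N$, and the $\limsup$ half hands you the expectation for free --- you only used the $\liminf$ half.
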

\begin{proof}
Let $x\in N$. By \cite[Theorem 2.8]{HW1}, $N=\liminf_{n\to \infty}M_n$ and we may choose $(x_n)_n\in \ell^{\infty}(\mathbb{N},M_n)$ satisfying  $x_n\stackrel{\text{so}*}{\longrightarrow }x$. It is clear that $(x_n)_n\in \mathcal{M}^{\omega}(M_n,\psi_n)$ because of the joint continuity of operator product in strong*-topology on bounded sets, and also that $(x_n)^{\omega}$ is independent of the choice of the sequence $(x_n)_n$ converging to $x$. Hence we may define $i\colon N\to (M_n,\psi_n)^{\omega}$ by $i(x)=(x_n)^{\omega}$, which is clearly an injective *-homomorphism, and as
\[(\psi_n)^{\omega}\circ i(x)=\lim_{n\to \omega}\psi_n(x_n)=\lim_{n\to \omega}\chi (x_n)=\varphi(x),\]
we have $(\psi_n)^{\omega}\circ i=\varphi$. In particular, $i$ is normal.
Next, for $y=(x_n)^{\omega}\in (M_n,\psi_n)^{\omega}$, let $x:={\rm{wo}-}\lim_{n\to \omega}x_n$. Then $x\in \limsup_{n\to \infty}M_n=N$ by \cite[Theorem 2.8]{HW1}. We then define $\varepsilon (y):=i(x)$. 
Let $x\in N$ and suppose $i(x)=(x_n)^{\omega}$ for $(x_n)_n\in \ell^{\infty}(\mathbb{N},M_n)$. Then as $x_n\stackrel{{\rm{so*}}}{\to}x$, we have $\varepsilon\circ i(x)=\varepsilon ((x_n)^{\omega})=i(x)$, whence $\varepsilon\circ i=i$. Therefore $\varepsilon$ is a faithful conditional expectation of $(M_n,\psi_n)^{\omega}$ onto $i(N)$
 Next, let $(x_n)^{\omega}\in (M_n,\psi_n)^{\omega}$ and let $x={\rm{wo}-}\lim_{n\to \omega}x_n$. Suppose $i(x)=(x_n')^{\omega}$, where $(x_n')_n\in \ell^{\infty}(\mathbb{N},M_n)$. Then 
\eqa{
(\psi_n)^{\omega}\circ \varepsilon ((x_n)^{\omega})&=(\psi_n)^{\omega}\circ i(x)=(\psi_n)^{\omega}((x_n')^{\omega})\\
&=\lim_{n\to \omega}\psi_n(x_n')=\lim_{n\to \omega}\chi(x_n')=\chi(x)\\
&=\lim_{n\to \omega}\chi(x_n)=(\psi_n)^{\omega}((x_n)^{\omega}),
}
whence $(\psi_n)^{\omega}\circ \varepsilon =(\psi_n)^{\omega}$. Since $(\psi_n)^{\omega}$ is normal and faithful, $\varepsilon$ is also normal. 
\end{proof}
\begin{corollary}\label{UW4.1.2}
Let $M, N\in {\rm{vN}}(H)$, and assume $(M_n)\subset {\rm{vN}}(H)$ has $M_n\cong M (n\in \mathbb{N})$ and $M_n\to N$. Then there exists a sequence $(\psi_n)_n\subset S_{\rm{nf}}(M)$ and an embedding $i$ of $N$ into $(M,\psi_n)^{\omega}$, and a normal faithful conditional expectation of $(M,\psi_n)^{\omega}$ onto $i(N)$.
\end{corollary}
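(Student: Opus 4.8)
The plan is to reduce the statement to Theorem~\ref{UW4.1.1} by transporting its conclusion along a sequence of $\ast$-isomorphisms, exploiting the functoriality of the Ocneanu ultraproduct with respect to state-preserving isomorphisms.

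Since $H$ is separable, $\mathbb{B}(H)$ admits a normal faithful state $\chi$ (for instance $\chi=\sum_{k\ge 1}2^{-k}\nai{\,\cdot\, e_k}{e_k}$ for an orthonormal basis $(e_k)_k$ of $H$). For each $n$ I fix a $\ast$-isomorphism $\theta_n\colon M\to M_n$ and set $\psi_n:=\chi|_{M_n}\in S_{\rm{nf}}(M_n)$ and $\widetilde{\psi}_n:=\psi_n\circ\theta_n\in S_{\rm{nf}}(M)$. The first step is to show that the sequence $(\theta_n)_n$ induces a canonical $\ast$-isomorphism $\Theta\colon (M,\widetilde{\psi}_n)^{\omega}\to (M_n,\psi_n)^{\omega}$. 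As each $\theta_n$ is an isometric $\ast$-isomorphism, $\Phi:=(\theta_n)_n$ is a $\ast$-isomorphism of $\ell^{\infty}(\mathbb{N},M)$ onto $\ell^{\infty}(\mathbb{N},M_n)$. Because $\psi_n\circ\theta_n=\widetilde{\psi}_n$, one has $\|\theta_n(x)\|_{\psi_n}=\|x\|_{\widetilde{\psi}_n}$ for every $x\in M$, so $\Phi$ carries $\mathcal{L}_{\omega}(M,\widetilde{\psi}_n)$ onto $\mathcal{L}_{\omega}(M_n,\psi_n)$; being a $\ast$-map it also carries $\mathcal{L}_{\omega}^{\ast}$ onto $\mathcal{L}_{\omega}^{\ast}$, hence $\mathcal{I}_{\omega}$ onto $\mathcal{I}_{\omega}$ and, consequently, the multiplier algebra $\mathcal{M}^{\omega}(M,\widetilde{\psi}_n)$ onto $\mathcal{M}^{\omega}(M_n,\psi_n)$. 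Passing to quotients yields the $\ast$-isomorphism $\Theta$, which is automatically normal since it is a $\ast$-isomorphism between von Neumann algebras. (Alternatively, this functoriality can be read off the identification $(M_n,\psi_n)^{\omega}\cong p(\prod^{\omega}M_n)p$ recalled in~\S2.3.)

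The second step is simply to apply Theorem~\ref{UW4.1.1} to the convergence $M_n\to N$, with the state $\chi$ fixed above: this produces an embedding $j\colon N\to (M_n,\psi_n)^{\omega}$ together with a normal faithful conditional expectation $\varepsilon_0$ of $(M_n,\psi_n)^{\omega}$ onto $j(N)$. I then set $i:=\Theta^{-1}\circ j\colon N\to (M,\widetilde{\psi}_n)^{\omega}$ and $\varepsilon:=\Theta^{-1}\circ\varepsilon_0\circ\Theta\colon (M,\widetilde{\psi}_n)^{\omega}\to i(N)$. Since $\Theta$ is a normal $\ast$-isomorphism, $i$ is again a unital normal injective $\ast$-homomorphism, and $\varepsilon$ is a normal faithful conditional expectation onto $i(N)=\Theta^{-1}(j(N))$; relabelling $\widetilde{\psi}_n$ as $\psi_n$ gives the assertion.

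I do not expect any genuine difficulty here. The only point requiring care is the functoriality claim of the first step, and this is a routine unwinding of the definitions of $\mathcal{L}_{\omega}$, $\mathcal{I}_{\omega}$ and $\mathcal{M}^{\omega}$ together with the fact that each $\theta_n$ is an isometric, state-preserving $\ast$-isomorphism.
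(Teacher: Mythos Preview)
Your proof is correct and follows essentially the same route as the paper: fix a normal faithful state $\chi$ on $\mathbb{B}(H)$, apply Theorem~\ref{UW4.1.1} to obtain an embedding and expectation into $(M_n,\chi|_{M_n})^{\omega}$, and then transport these along the $\ast$-isomorphism $(M,\psi_n\circ\theta_n)^{\omega}\cong (M_n,\psi_n)^{\omega}$ induced by the chosen isomorphisms $\theta_n\colon M\to M_n$. The only cosmetic difference is that the paper first picks $\varphi\in S_{\rm nf}(N)$ and invokes \cite[Lemma~5.6]{HW1} to extend it to $\chi$, whereas you simply write down a concrete $\chi$; for the corollary as stated this extra step is not needed.
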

\begin{proof}
Let $\varphi \in S_{\text{nf}}(N)$. Using \cite[Lemma 5.6]{HW1}, we find $\chi \in S_{\text{nf}}(\mathbb{B}(H))$ such that $\varphi=\chi|_N$. Let $\psi_n':=\chi|_{M_n}\in S_{\text{nf}}(M_n)$ for each $n\in \mathbb{N}$. For each $n\in \mathbb{N}$, take a *-isomorphism $\Phi_n\colon M\to M_n$ and let $\psi_n:=\psi_n'\circ \Phi_n$. Then $\Phi((x_n)^{\omega})=(\Phi_n(x_n))^{\omega}$ defines a *-isomorphism $\Phi\colon  (M,\psi_n)^{\omega}\to (M_n,\psi_n')^{\omega}$. By Theorem \ref{UW4.1.1}, there exists an embedding $i'\colon N\to (M_n,\psi_n')^{\omega}$ and a normal faithful conditional expectation $\varepsilon'\colon (M_n,\psi_n')^{\omega}\to i'(N)$. Then $i:=\Phi^{-1}\circ i'\colon N\to (M,\psi_n)^{\omega}$ and $\varepsilon:=\Phi^{-1}\circ \varepsilon'\circ \Phi\colon (M,\psi_n)^{\omega}\to i(N)$ are the required embedding and the normal faithful conditional expectation.  
\end{proof}
We now prove a partial converse to Theorem \ref{UW4.1.1}, which is at the same time a generalization of \cite[Lemma 5.5]{HW2} to the non-tracial case:
\begin{theorem}\label{UW4.1.3}
Let $\{M_n\}_{n=1}^{\infty}\subset {\rm{vN}}^{{\rm{st}}}(H)$ and $N\in {\rm{vN}}^{{\rm{st}}}(H)$. Let $\psi_n\in S_{\rm{nf}}(M_n)\ (n\in \mathbb{N})$. Assume that we are given an embedding $i\colon N\to (M_n,\psi_n)^{\omega}$ and a normal faithful conditional expectation $\varepsilon\colon (M_n,\psi_n)^{\omega}\to i(N)$. Then there exists $(u_n)_n\subset \mathcal{U}(H)$ and a strictly increasing sequence $\{n_k\}_{k=1}^{\infty}\subset \mathbb{N}$ such that $u_{n_k}M_{n_k}u_{n_k}^*\to N$ in ${\rm{vN}}(H)$.  
\end{theorem}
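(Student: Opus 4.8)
The plan is to construct, for a suitable sequence $(u_n)_n \subset \mathcal{U}(H)$, the \emph{ultrafilter} limit $\lim_{n\to\omega} u_n M_n u_n^* = N$ in ${\rm vN}(H)$; the assertion then follows since ${\rm vN}(H)$ is Polish, so $\lim_{n\to\omega} u_n M_n u_n^* = N$ forces a strictly increasing sequence $\{n_k\}_{k=1}^{\infty}$ with $u_{n_k} M_{n_k} u_{n_k}^* \to N$ in the ordinary sense. By the descriptions of $\lim_{n\to\omega}$ and of $\big(\limsup_{n\to\omega}(-)\big)'$ recalled in $\S2$, it suffices to prove the two inclusions
\[ N \subseteq \liminf_{n\to\omega} u_n M_n u_n^*, \qquad N' \subseteq \liminf_{n\to\omega} u_n M_n' u_n^* ; \]
indeed the second yields $\limsup_{n\to\omega} u_n M_n u_n^* = \big(\liminf_{n\to\omega} u_n M_n' u_n^*\big)' \subseteq N'' = N$, which with the first gives $\liminf_{n\to\omega} u_n M_n u_n^* = \limsup_{n\to\omega} u_n M_n u_n^* = N$.

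First I would fix the ambient picture. Set $\varphi := (\psi_n)^\omega \circ i \in S_{\rm nf}(N)$ and let $\eta \in \mathcal{P}_N^{\natural}$ implement $\varphi$, $\xi_n \in \mathcal{P}_{M_n}^{\natural} \subset H$ implement $\psi_n$ (all cyclic and separating). Since each $M_n$ acts standardly on $H$ we identify $H \cong L^2(M_n,\psi_n)$ with $\xi_n$ the canonical vector, so that $\prod^\omega M_n$ acts on $H^\omega := \prod^\omega H$ and, by \cite[Proposition 3.15]{AndoHaagerup} together with the analysis of standard forms there, $(M_n,\psi_n)^\omega \cong p\big(\prod^\omega M_n\big)p$ (with $p$ the support projection of $\omega_{\xi_\omega}$ and $\xi_\omega := (\xi_n)_\omega$), acting standardly on $\mathcal{H} := \overline{p(\prod^\omega M_n)p\,\xi_\omega} \subseteq H^\omega$ with modular conjugation the restriction of $J_\omega := (J_n)_\omega$ (here $J_n$ is the modular conjugation of $(M_n,\psi_n)$) and with commutant a reduction of $\prod^\omega M_n'$. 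Since $\omega_{\xi_\omega}|_{i(N)}$ is faithful, $\xi_\omega$ is cyclic and separating for $i(N)$ on $K := \overline{i(N)\xi_\omega} \subseteq \mathcal{H}$, so there is a unitary $U\colon H \to K$ with $U x\eta = i(x)\xi_\omega$ $(x\in N)$ and $U J_N U^* = J_{i(N)}$. At this point I would use that $\varepsilon$ may be assumed $(\psi_n)^\omega$-preserving, equivalently that $i(N)$ is globally $\sigma_t^{(\psi_n)^\omega}$-invariant: the given $\varepsilon$ does preserve the state $(\psi_n)^\omega \circ \varepsilon$, and one reduces to the stated situation via the results of \cite{AndoHaagerup}. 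Granting this, $K$ is $J_\omega$-invariant with $J_{i(N)} = J_\omega|_K$; hence for $a' \in N'$, writing $b := J_N a' J_N \in N$ and fixing a lift $(y_n^b)_n \in \mathcal{M}^\omega(M_n,\psi_n)$ of $i(b)$, one obtains
\[ U a' U^* = J_{i(N)}\, i(b)\, J_{i(N)}|_K = J_\omega\, \pi_\omega\big((y_n^b)_n\big)\, J_\omega|_K , \]
which is implemented by the bounded sequence $(J_n y_n^b J_n)_n$ with $J_n y_n^b J_n \in M_n'$.

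Next I would build the unitaries. Fix a countable unital $*$-subalgebra $N_0 \subset N$ that is $\sigma$-strongly$^*$ dense, and likewise $N_0' \subset N'$. For $a \in N_0$ choose lifts $(y_n^a)_n \in \mathcal{M}^\omega(M_n,\psi_n)$ of $i(a)$ with $\|y_n^a\| \le \|a\|$, and for $a' \in N_0'$ put $y_n^{a'} := J_n y_n^{J_N a' J_N} J_n \in M_n'$, a lift of $U a' U^*$ as above with $\|y_n^{a'}\| \le \|a'\|$. Since $U$ is a unitary and $U(a\eta) = (y_n^a\xi_n)_\omega$, $U(a'\eta) = (y_n^{a'}\xi_n)_\omega$, all inner products among the vectors $\{y_n^a\xi_n : a\in N_0\} \cup \{y_n^{a'}\xi_n : a'\in N_0'\}$ converge along $\omega$ to the corresponding inner products among $\{a\eta : a\in N_0\} \cup \{a'\eta : a'\in N_0'\}$ in $H$. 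Using that $H$ is separable and infinite-dimensional, a standard perturbation argument for vectors with nearly equal Gram matrices, combined with a diagonalization over $\omega$, produces $u_n \in \mathcal{U}(H)$ with $\lim_{n\to\omega}\|u_n(y_n^a\xi_n) - a\eta\| = 0$ and $\lim_{n\to\omega}\|u_n(y_n^{a'}\xi_n) - a'\eta\| = 0$ for all $a\in N_0$, $a'\in N_0'$; in particular $\lim_{n\to\omega}\|u_n\xi_n - \eta\| = 0$.

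Finally I would verify the two inclusions for these $u_n$. For $a\in N_0$, the element $u_n y_n^a u_n^* \in u_n M_n u_n^*$ has norm $\le \|a\|$, and for $b\in N_0$ one computes, using $\lim_{n\to\omega}\|u_n^*(b\eta) - y_n^b\xi_n\| = 0$, the fact that $\mathcal{I}_\omega$ is a two-sided ideal in $\mathcal{M}^\omega$ (so $(y_n^a y_n^b - y_n^{ab})_n \in \mathcal{I}_\omega$, whence $\|(y_n^a y_n^b - y_n^{ab})\xi_n\| \to_\omega 0$), and the choice of the $u_n$,
\[ u_n y_n^a u_n^*\,(b\eta) \;\approx_\omega\; u_n y_n^a y_n^b \xi_n \;\approx_\omega\; u_n y_n^{ab}\xi_n \;\xrightarrow[n\to\omega]{}\; (ab)\eta = a(b\eta) , \]
where $\approx_\omega$ denotes equality up to a term tending to $0$ in norm along $\omega$; a symmetric computation treats the adjoints, so $u_n y_n^a u_n^* \to a$ $\sigma$-strongly$^*$ along $\omega$ (the set $\{b\eta : b\in N_0\}$ being total in $H$). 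Hence $N_0 \subseteq \liminf_{n\to\omega} u_n M_n u_n^*$, and since the latter is a von Neumann algebra, $N \subseteq \liminf_{n\to\omega} u_n M_n u_n^*$. The inclusion $N' \subseteq \liminf_{n\to\omega} u_n M_n' u_n^*$ is obtained in exactly the same way from the $y_n^{a'} \in M_n'$, the only new point being that $(y_n^{a'} y_n^{b'} - y_n^{a'b'})_n \in \mathcal{I}_\omega$, which follows from the corresponding relation on the $M_n$-side by conjugating with the isometry $J_n$ (which fixes $\xi_n$ and carries $\mathcal{I}_\omega(M_n,\psi_n)$ into $\mathcal{I}_\omega(M_n',\psi_n')$). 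Combining the two inclusions as in the first paragraph gives $\lim_{n\to\omega} u_n M_n u_n^* = N$, and passing to a subsequence completes the proof. The step I expect to be the main obstacle is the reduction to a $(\psi_n)^\omega$-preserving conditional expectation --- equivalently, the extraction from $\varepsilon$ of a description of $N'$ inside an Ocneanu ultraproduct of the commutants $M_n'$ that is compatible with the Groh--Raynaud/sequential picture above; the remaining ingredients (the structure of the ultraproducts and their standard forms, and the Gram-matrix perturbation of vectors) are routine.
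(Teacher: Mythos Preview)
Your strategy is sound and runs parallel to the paper's, but the step you flag as the main obstacle deserves a precise formulation. You cannot literally ``assume $\varepsilon$ is $(\psi_n)^\omega$-preserving''; rather, the paper invokes \cite[Corollary 3.29]{AndoHaagerup} to replace the sequence $(\psi_n)$ by a new sequence $(\varphi_n)$ of normal faithful states on $M_n$ such that $(M_n,\varphi_n)^\omega=(M_n,\psi_n)^\omega$ as von Neumann algebras and $(\varphi_n)^\omega=(\psi_n)^\omega\circ\varepsilon$. With the states so changed, $\varepsilon$ is $(\varphi_n)^\omega$-preserving, Takesaki's theorem yields $J_{i(N)}=J_{\widetilde M}|_{L^2(i(N))}$, and via \cite[Theorems 3.7 and 3.19]{AndoHaagerup} this identifies with the restriction of $J_\omega=(J_n)_\omega$ built from the \emph{new} cyclic vectors $\xi_n$ implementing $\varphi_n$. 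Your identity $Ua'U^*=J_\omega\pi_\omega((y_n^b)_n)J_\omega|_K$ and the convergence of the cross Gram entries $\langle y_n^a\xi_n,\,y_n^{a'}\xi_n\rangle\to\langle a\eta,a'\eta\rangle$ are only justified after this change of states; once it is made, your argument goes through.

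After that reduction your execution is dual to the paper's. You prove the two \emph{liminf} inclusions $N\subset\liminf_{n\to\omega}u_nM_nu_n^*$ and $N'\subset\liminf_{n\to\omega}u_nM_n'u_n^*$ by exhibiting explicit so$^*$-convergent lifts and tracking products modulo $\mathcal{I}_\omega$; the paper instead proves the two \emph{limsup} inclusions by showing that the compressions $P_L\pi_\omega((x_n)_n)P_L$ of \emph{arbitrary} bounded sequences land in $\pi_L(N)$ (using the decomposition of \cite[Corollary 3.16]{AndoHaagerup}), and symmetrically for the commutants. Either pair of inclusions suffices. Your Gram-matrix perturbation is essentially \cite[Lemma 5.1]{HW2} read backwards: that lemma produces unitaries $v_n\in\mathcal{U}(L,H)$ with $\xi=(v_n\xi)_\omega$ for all $\xi$ in a separable $L\subset H_\omega$, and the paper then works with the weak-operator limits of $v_n^*x_nv_n$, which sidesteps the bookkeeping of products of lifts. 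Your route is more hands-on but equally valid; the paper's compression argument has the advantage that it treats all bounded sequences at once rather than a countable dense subalgebra.
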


\begin{proof}
For simplicity, we use notations $\widetilde{M}=(M_n,\psi_n)^{\omega}$ and $\tilde{\psi}=(\psi_n)^{\omega}$. 
Let $\tilde{\varphi}:=\tilde{\psi}\circ \varepsilon\in S_{\text{nf}}(\tilde{M})$. Then by \cite[Corollary 3.29]{AndoHaagerup}, there exists $\varphi_n\in S_{\rm{nf}}(M_n)\ (n\in \mathbb{N})$ such that $(M_n,\varphi_n)^{\omega}=\widetilde{M}$ and $\tilde{\varphi}=(\varphi_n)^{\omega}$. Put $\varphi:=\tilde{\varphi}|_{i(N)}$. Since each $M_n$ acts standardly on $H$, there exists a cyclic and separating vector $\xi_n\in H$ for $M_n$ such that $\varphi_n(x)=\nai{x\xi_n}{\xi_n}\ (x\in M_n)$ for $n\in \mathbb{N}$. Let $\tilde{\xi}:=(\xi_n)_{\omega}\in H_{\omega}$. Define as in \cite[Theorem 3.7]{AndoHaagerup} an isometry $w\colon L^2(\widetilde{M},\tilde{\psi})\to H_{\omega}$ given as the unique extension of $(x_n)^{\omega}\xi_{\tilde{\varphi}}\mapsto (x_n\xi_n)_{\omega}\ ((x_n)^{\omega}\in \widetilde{M})$. 
Put $K:=w(L^2(\widetilde{M},\tilde{\varphi}))$. Let $J_n\colon H\to H$ be the modular conjugation associated with the standard representation of $M_n$ on $H$. Let $J_{\omega}:=(J_n)_{\omega}\colon H_{\omega}\to H_{\omega}$. Then by \cite[Theorems 3.7 and 3.19]{AndoHaagerup}, $J_{\omega}$ is the modular conjugation associated with the standard representation of $\prod^{\omega}M_n$ on $H_{\omega}$, and $J_{\omega}|_K=wJ_{\widetilde{M}}w^*$. Here, $J_{\widetilde{M}}$ is the modular conjugation associated with the standard representation of $\widetilde{M}$ on $L^2(\widetilde{M},\tilde{\varphi})$. Since $\varepsilon (\widetilde{M})=i(N)$ and $\varphi=\tilde{\varphi}|_{i(N)}$, $\tilde{\varphi}=\varphi \circ \varepsilon$ holds. Thus we may regard $L^2(i(N),\varphi)$ as a closed subspace of $L^2(\widetilde{M},\tilde{\varphi})$ (thus $\xi_{\varphi}=\xi_{\tilde{\varphi}}$). In this case by the argument in \cite{Takesaki2}  $J_{i(N)}=J_{\widetilde{M}}|_{L^2(i(N),\varphi)}$ holds, where $J_{i(N)}$ is the modular conjugation associated with the natural representatin $\pi_{\varphi}$ of $N$ on $L^2(i(N),\varphi)$ given by $\pi_{\varphi}(x)i(y)\xi_{\varphi}=i(xy)\xi_{\varphi}\ (x,y\in N)$. Put $L:=w(L^2(i(N),\varphi))\subset K\subset H_{\omega}$ and $w_0:=w|_{L^2(i(N),\varphi)}$. Let $P_L$ (resp. $P_K$) be the projection of $H_{\omega}$ onto $L$ (resp. $K$), and let $e_L$ be the projection of $K$ onto $L$. Let $e$ be the projection of $L^2(\widetilde{M},\tilde{\varphi})$ onto $L^2(i(N),\varphi)$.   

\[
\xymatrix{
L^2(\widetilde{M},\tilde{\varphi})\ar[d]^e_{\cup }\ar[r]^{\ \ w} & K\ar[d]^{e_L}_{\cup} & \ar[l]_{P_K}^{\subset}\ar[dl]^{P_L}H_{\omega}\\
L^2(i(N),\varphi)\ar[r]^{\ \ \ \ w_0} & L & 
}
\]
Since $N_*$ is separable, $L$ is a separable subspace of $H_{\omega}$. 
So by \cite[Lemma 5.1]{HW2}, there exist unitaries $v_n\in \mathcal{U}(L,H)\ (n\in \mathbb{N})$ such that $\xi=(v_n\xi)_{\omega}$ holds for every $\xi \in L$. 
Recall that to construct the Groh-Raynaud ultraproduct $\prod^{\omega}M_n$, we used $\pi_{\omega}\colon \ell^{\infty}(\mathbb{N},M_n)\to \mathbb{B}(H_{\omega})$ given by $\pi_{\omega}((x_n)_n)(\xi_n)_{\omega}=(x_n\xi_n)_{\omega}$ for $(x_n)_n\in \ell^{\infty}(\mathbb{N},M_n)$ and $(\xi_n)_{\omega}\in H_{\omega}$. 
Define an analogous map $\hat{\pi}_{\omega}\colon \ell^{\infty}(\mathbb{N},M_n')\to \mathbb{B}(H_{\omega})$. 
\\ \\
\textbf{Claim 1.} Let $(x_n)_n\in \ell^{\infty}(\mathbb{N},M_n)$ and $(x_n')_n\in \ell^{\infty}(\mathbb{N},M_n')$. Then as elements of $\mathbb{B}(L)$, the following holds. 
\begin{align}
\text{wo -}\lim_{n\to \omega}v_n^*x_nv_n=P_L\pi_{\omega}((x_n)_n)P_L,\label{eq: P_LB(H)P_L part 1}\\
\text{wo -}\lim_{n\to \omega}v_n^*x_n'v_n=P_L\hat{\pi}_{\omega}((x_n')_n)P_L.\label{eq: P_LB(H)P_L part 2}
\end{align}


Let $\displaystyle y:=\text{wo -}\lim_{n\to \omega}v_n^*x_nv_n$. Clearly $v_n^*x_nv_n\in P_L\mathbb{B}(H_{\omega})P_L$, and for $\xi,\eta\in L$, we have 
\eqa{
\nai{y\xi}{\eta}&=\lim_{n\to \omega}\nai{x_nv_n\xi}{v_n\eta}\\
&=\nai{\pi_{\omega}((x_n)_n)(v_n\xi)_{\omega}}{(v_n\eta)_{\omega}}\\
&=\nai{\pi_{\omega}((x_n)_n)\xi}{\eta},
}
whence (\ref{eq: P_LB(H)P_L part 1}) holds. (\ref{eq: P_LB(H)P_L part 2}) can be proved similarly.  
\\ \\
\textbf{Claim 2.} The following holds (as elements in $\mathbb{B}(L)$):
\begin{align}
P_L\pi_{\omega}(\ell^{\infty}(\mathbb{N},M_n))P_L&\subset \pi_L(N),\label{eq: P_LB(H)P_L is in pi_L}\\
P_L\hat{\pi}_{\omega}(\ell^{\infty}(\mathbb{N},M_n'))P_L&\subset \pi_L(N)',\label{eq: P_LB(H)'P_L is in pi_L'}
\end{align}
where $\pi_L$ is the natural standard action of $N$ on $L$ unitarily equivalent to $\pi_{\varphi}$, i.e., $\pi_L(x)=w_0\pi_{\varphi}(x)w_0^*\ (x\in N)$.
By \cite[Theorem 3.7, Corollary 3.28]{AndoHaagerup}, $P_K=ww^*, w^*(\prod^{\omega}M_n)w=\widetilde{M}$ and $\rho\colon \widetilde{M}\ni \pi_{\tilde{\varphi}}(x)\mapsto wxw^*\in P_K(\prod^{\omega}M_n)P_K$ is a *-isomorphism. Note that by the existence of $\varepsilon\colon \widetilde{M}\to i(N)$, $e$ satisfies $e\pi_{\tilde{\varphi}}((x_n)^{\omega})e=\pi_{\varphi}(\varepsilon((x_n)^{\omega}))e=e\pi_{\varphi}(\varepsilon((x_n)^{\omega}))$ for $(x_n)^{\omega}\in \widetilde{M}$. Now let $(x_n)_n\in \ell^{\infty}(\mathbb{N},M_n)$. Then by \cite[Corollary 3.16]{AndoHaagerup}, there exists $(a_n)_n\in \mathcal{M}^{\omega}(M_n,\varphi_n)$, $(b_n)_n\in \mathcal{L}_{\omega}(M_n,\varphi_n)$ and $(c_n)_n\in \mathcal{L}_{\omega}(M_n,\varphi_n)^*$ such that $x_n=a_n+b_n+c_n\ (n\in \mathbb{N})$, and moreover by (the proof of) \cite[Theorem 3.7, Corollary 3.28]{AndoHaagerup}, the following equalities hold:
\[P_K\pi_{\omega}((x_n)_n)P_K=P_K(\pi_{\omega}((a_n)_n)P_K=w\pi_{\tilde{\varphi}}((a_n)^{\omega})w^*.\]
Then as $P_L=e_LP_K$, it holds that for $\xi \in L$, 
\eqa{
P_L\pi_{\omega}((x_n)_n)P_L\xi &=e_LP_K\pi_{\omega}((x_n)_n)P_Ke_LP_K\xi \\
&=e_Lw\pi_{\tilde{\varphi}}((a_n)^{\omega})w^*e_L\xi\\
&=w_0e\pi_{\tilde{\varphi}}((a_n)^{\omega})ew_0^*\xi\\
&=w_0\pi_{\varphi}(\varepsilon ((a_n)^{\omega}))w_0^*\xi\\
&=\pi_L(i^{-1}\circ \varepsilon ((a_n)^{\omega}))\xi,
}
so $P_L\pi_{\omega}((x_n)_n)P_L=\pi_L(i^{-1}\circ \varepsilon ((a_n)^{\omega}))\in \pi_L(N)$ and (\ref{eq: P_LB(H)P_L is in pi_L}) is proved. For (\ref{eq: P_LB(H)'P_L is in pi_L'}), let $(x_n')_n\in \ell^{\infty}(\mathbb{N},M_n)$. Then $x_n'=J_nx_nJ_n$, $x_n:=J_nx_n'J_n\in M_n\ (n\in \mathbb{N})$, and $\hat{\pi}_{\omega}((x_n')_n)=J_{\omega}\pi_{\omega}((x_n)_n)J_{\omega}$. Since $P_K=pJ_{\omega}pJ_{\omega}$ \cite[Corollary 3.28 (i)]{AndoHaagerup}, where $p=\text{supp}((\varphi_n)_{\omega})\in \prod^{\omega}M_n$, $P_KJ_{\omega}=J_{\omega}P_K$ holds. Choose $(a_n)_n\in \mathcal{M}^{\omega}(M_n,\varphi_n)$ for $(x_n)_n$ as above so that $w\pi_{\tilde{\varphi}}((a_n)^{\omega})w^*=P_K\pi_{\omega}((x_n)_n)P_K$ holds. Thus, it holds that (as elements in $\mathbb{B}(L)$)
\eqa{
P_L\hat{\pi}_{\omega}((x_n'))P_L&=e_LP_KJ_{\omega}\pi_{\omega}((x_n)_n)J_{\omega}P_Ke_L\\
&=e_LJ_{\omega}P_K\pi_{\omega}((x_n)_n)P_KJ_{\omega}e_L\\
&=J_Le_Lw\pi_{\tilde{\varphi}}((a_n)^{\omega})w^*e_LJ_L\\
&=J_L\pi_L(i^{-1}\circ \varepsilon ((a_n)^{\omega}))J_L\in \pi_L(N)',
}
and (\ref{eq: P_LB(H)'P_L is in pi_L'}) is proved.
 \\ \\
\textbf{Claim 3.} $\displaystyle \lim_{n\to \omega}v_n^*M_nv_n=\pi_L(N)$.\\
To show the claim, by \cite[Lemma 5.2]{HW2}, it suffices to show the following:
\begin{align}
\limsup_{n\to \omega}v_n^*M_nv_n&\subset \pi_L(N),\label{eq: limsup v_n^*M_nv_n}\\
\limsup_{n\to \omega}v_n^*M_n'v_n&\subset \pi_L(N)'.\label{eq: limsup v_n^*M_n'v_n} 
\end{align}
To prove (\ref{eq: limsup v_n^*M_nv_n}) and (\ref{eq: limsup v_n^*M_n'v_n}), it is enough to show that for every $(x_n)_n\in \ell^{\infty}(\mathbb{N},M_n)$ and $(x_n')_n\in \ell^{\infty}(\mathbb{N},M_n')$,  
\begin{align}
\text{wo -}\lim_{n\to \omega}v_n^*x_nv_n&\in \pi_L(N),\label{eq: v_n^*x_nv_n is in pi_L(N)}\\
\text{wo -}\lim_{n\to \omega}v_n^*x_n'v_n&\in \pi_L(N)'\label{eq: v_n^*x_n'v_n is in pi_L(N)'}.
\end{align}
But they are the consequences of Claim 1 and Claim 2.

Finally, as the representation $\pi_L$ is standard and $N\in {\rm{vN}}^{\rm{st}}(H)$, there exists $u\in \mathcal{U}(L,H)$ such that $u\pi_L(x)u^*=x\ (x\in N)$. Then by Claim 3, we have $u_nMu_n^*\to N\ (n\to \omega)$ in vN$(H)$, where $u_n=uv_n^*\in \mathcal{U}(H)$. From this and the fact that ${\rm{vN}}(H)$ is a separable metrizable space, we may find a subsequence $\{n_k\}_{k=1}^{\infty}$ so that $u_{n_k}Mu_{n_k}^*\to N\ (k\to \infty)$ in vN$(H)$.  
\end{proof}

In \cite[Theorem 5.8]{HW2}, the second and the third-named authors proved that a type II$_1$ factor is in $\overline{\mathcal{F}_{\text{inj}}}$ (the closure of injective factors) if and only if it embeds into $R^{\omega}$, where $R$ is the injective II$_1$ factor. Let $R_{\infty}$ (resp. $R_{\lambda}$) denote the injective factor of type III$_1$ (resp. type III$_{\lambda}$\ $(0<\lambda<1)$). As an application of the theorems above, we get the following result for general von Neumann algebras:
\begin{theorem}\label{UW4.1.4} Let $N\in {\rm{vN}}(H)$ and let $0<\lambda<1$. Then the following conditions are equivalent:
\begin{list}{}{}
\item[{\rm{(i)}}] $N\in \overline{\mathcal{F}_{\rm{inj}}}$.
\item[{\rm{(ii)}}] There is an embedding $i\colon N\to R_{\infty}^{\omega}$ and a normal faithful conditional expectation $\varepsilon\colon R_{\infty}^{\omega}\to i(N)$.
\item[{\rm{(iii)}}] There is an embedding $i\colon N\to R_{\lambda}^{\omega}$ and a normal faithful conditional expectation $\varepsilon\colon R_{\lambda}^{\omega}\to i(N)$.
\item[{\rm{(iv)}}] There is $\{k_n\}_{n=1}^{\infty}\subset \mathbb{N}$, a sequence $\varphi_n\in S_{\rm{nf}}(M_{k_n}(\mathbb{C}))$ such that $N$ admits an embedding $i\colon N\to (M_{k_n}(\mathbb{C}),\varphi_n)^{\omega}$ and a normal faithful conditional expectation $\varepsilon\colon (M_{k_n}(\mathbb{C}),\varphi_n)^{\omega}\to i(N)$. 
\end{list}
\end{theorem}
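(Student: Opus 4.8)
I would establish the cyclic chain $(\mathrm{i})\Rightarrow(\mathrm{ii})\Rightarrow(\mathrm{iv})\Rightarrow(\mathrm{iii})\Rightarrow(\mathrm{i})$, using Theorem \ref{UW4.1.1} to turn Effros--Mar\'echal convergence into embeddings with conditional expectations and Theorem \ref{UW4.1.3} to go back. Before starting I would collect the routine reductions and the structural inputs. Conditions (ii)--(iv) depend only on the $*$-isomorphism class of $N$, so, via the identifications of faithful normal representations of $N$ on $H$ from \cite[\S 5]{HW2}, I may assume $N\in\mathrm{vN}^{\mathrm{st}}(H)$. I would use freely: the constant embedding $M\hookrightarrow(M,\varphi)^{\omega}$ carries a $\varphi^{\omega}$-preserving normal faithful conditional expectation; a von Neumann subalgebra with a state-preserving normal faithful conditional expectation passes, together with its expectation, to the Ocneanu ultraproduct \cite{AndoHaagerup}; every injective factor $M$ with separable predual satisfies $M\bar\otimes R_{\infty}\cong R_{\infty}$ and every matrix algebra satisfies $M_{k}(\mathbb C)\bar\otimes R_{\lambda}\cong R_{\lambda}$; $R_{\infty}$ and $R_{\lambda}$ are ITPFI factors; and, by \cite{AndoHaagerup}, the Ocneanu ultraproduct of $R_{\infty}$ (resp. $R_{\lambda}$) along an \emph{arbitrary} sequence of normal faithful states is canonically isomorphic to $R_{\infty}^{\omega}$ (resp. $R_{\lambda}^{\omega}$). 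For $R_{\infty}$ this last point is elementary: by Connes--St\o rmer transitivity any two normal faithful states on a type III$_1$ factor are approximately unitarily conjugate, and assembling the approximating unitaries coordinatewise gives a state-preserving isomorphism of the two ultraproducts.

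The two ``going back'' implications are immediate from Theorem \ref{UW4.1.3}. For $(\mathrm{ii})\Rightarrow(\mathrm{i})$ realize $R_{\infty}$ standardly on $H$, take the constant sequence $M_{n}=R_{\infty}$ with $\psi_{n}$ a fixed normal faithful state (so $(M_{n},\psi_{n})^{\omega}=R_{\infty}^{\omega}$), and apply Theorem \ref{UW4.1.3} to the given embedding $N\hookrightarrow R_{\infty}^{\omega}$ with its normal faithful conditional expectation: one obtains unitaries $u_{n_{k}}\in\mathcal U(H)$ with $u_{n_{k}}R_{\infty}u_{n_{k}}^{*}\to N$ in $\mathrm{vN}(H)$, whence $N\in\overline{\mathcal F_{\mathrm{inj}}}$ since each $u_{n_{k}}R_{\infty}u_{n_{k}}^{*}$ is an injective factor. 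The argument for $(\mathrm{iii})\Rightarrow(\mathrm{i})$ is identical with $R_{\lambda}$ in place of $R_{\infty}$.

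For $(\mathrm{i})\Rightarrow(\mathrm{ii})$: choose injective factors $M_{n}\to N$ in $\mathrm{vN}(H)$ and $\chi\in S_{\mathrm{nf}}(\mathbb B(H))$; Theorem \ref{UW4.1.1} with $\psi_{n}=\chi|_{M_{n}}$ gives an embedding $N\hookrightarrow(M_{n},\psi_{n})^{\omega}$ with a normal faithful conditional expectation. Using $M_{n}\bar\otimes R_{\infty}\cong R_{\infty}$, embed $M_{n}$ as $M_{n}\bar\otimes1$ into $R_{\infty}$ and use the slice map $\mathrm{id}\otimes\rho$ ($\rho$ a normal faithful state on $R_{\infty}$) as a conditional expectation onto $M_{n}$; transporting the state $\psi_{n}\otimes\rho$ and passing to ultraproducts embeds $(M_{n},\psi_{n})^{\omega}$ into $(R_{\infty},\chi_{n})^{\omega}$ with a normal faithful conditional expectation, and $(R_{\infty},\chi_{n})^{\omega}\cong R_{\infty}^{\omega}$ by the Connes--St\o rmer remark. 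Composing the embeddings and expectations yields (ii). For $(\mathrm{ii})\Rightarrow(\mathrm{iv})$: since $R_{\infty}=\overline{\bigcup_{n}A_{n}}''$ with $A_{n}\cong M_{2^{n}}(\mathbb C)$ an increasing sequence of finite-dimensional subfactors, $A_{n}\to R_{\infty}$ in $\mathrm{vN}(H)$, so Theorem \ref{UW4.1.1} embeds $R_{\infty}$ into $(A_{n},\chi|_{A_{n}})^{\omega}$ with a state-compatible conditional expectation; applying this coordinatewise over a second index embeds $R_{\infty}^{\omega}$, with a normal faithful conditional expectation, into the iterated Ocneanu ultraproduct $\big((A_{n},\chi|_{A_{n}})^{\omega}\big)^{\omega}$, which a diagonal argument identifies (compatibly with the conditional expectations) with a single Ocneanu ultraproduct $(M_{k_{m}}(\mathbb C),\varphi_{m})^{\omega}$ of matrix algebras; composing with $N\hookrightarrow R_{\infty}^{\omega}$ gives (iv). Finally, for $(\mathrm{iv})\Rightarrow(\mathrm{iii})$: exactly as in $(\mathrm{i})\Rightarrow(\mathrm{ii})$, using $M_{k_{n}}(\mathbb C)\bar\otimes R_{\lambda}\cong R_{\lambda}$ and the state-independence of $R_{\lambda}^{\omega}$, embed $(M_{k_{n}}(\mathbb C),\psi_{n})^{\omega}$ into $R_{\lambda}^{\omega}$ with a normal faithful conditional expectation and compose with the data of (iv). This closes the cycle.

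The implications relying on Theorems \ref{UW4.1.1} and \ref{UW4.1.3} are essentially bookkeeping once those are in hand, and the tensorial absorption facts are classical consequences of the Connes--Haagerup classification. The real obstacles are the two inputs drawn from \cite{AndoHaagerup}: the state-independence of the Ocneanu ultraproduct of $R_{\lambda}$ along an arbitrary sequence of normal faithful states --- here, unlike the type III$_1$ case, Connes--St\o rmer transitivity is not available, so this genuinely rests on the ultraproduct machinery --- and the diagonal identification of an iterated Ocneanu ultraproduct of matrix algebras with a single one, together with the coherence of the associated conditional expectations. I would also expect the passage to $N\in\mathrm{vN}^{\mathrm{st}}(H)$ to need some care, since a priori membership in $\overline{\mathcal F_{\mathrm{inj}}}$ refers to the chosen action of $N$ on $H$.
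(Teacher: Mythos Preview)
Your overall strategy --- Theorem \ref{UW4.1.1} for convergence-to-embedding and Theorem \ref{UW4.1.3} for the converse, plus tensorial absorption into $R_{\infty}$ or $R_{\lambda}$ --- is exactly the paper's, and your arguments for $(\mathrm{i})\Rightarrow(\mathrm{ii})$, $(\mathrm{iv})\Rightarrow(\mathrm{iii})$ and the two ``going back'' implications are essentially correct.

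The genuine gap is $(\mathrm{ii})\Rightarrow(\mathrm{iv})$. Your route embeds $R_{\infty}^{\omega}$ into the iterated Ocneanu ultraproduct $\big((A_{n},\chi|_{A_{n}})^{\omega}\big)^{\omega}$ and then invokes a ``diagonal argument'' to identify this with a single matrix ultraproduct. That identification is \emph{not} in \cite{AndoHaagerup}, and it is not routine: unlike the tracial case, the multiplier-algebra constraint in the Ocneanu construction makes a naive reindexing fail, and you would have to build the coherence of the conditional expectations by hand. You correctly flag this as a ``real obstacle'', but it is entirely avoidable. The paper never proves $(\mathrm{ii})\Rightarrow(\mathrm{iv})$ directly. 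Instead it organizes the equivalences as a hub around (i): for $(\mathrm{i})\Rightarrow(\mathrm{iv})$ it uses that finite type~I factors are dense in $\mathcal{F}_{\mathrm{inj}}$ (\cite[Corollary 2.11]{HW2}), so some $N_{n}\cong M_{k_{n}}(\mathbb{C})$ converge to $N$ and Theorem \ref{UW4.1.1} gives the embedding with expectation in one line; and $(\mathrm{iv})\Rightarrow(\mathrm{ii})$ is your own $(\mathrm{iv})\Rightarrow(\mathrm{iii})$ argument with $R_{\infty}$ in place of $R_{\lambda}$. This eliminates the iterated ultraproduct entirely.

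A smaller point: the paper does not reduce to $N\in\mathrm{vN}^{\mathrm{st}}(H)$ at the outset. For $(\mathrm{ii})\Rightarrow(\mathrm{i})$ it instead passes to $\tilde N=N\bar\otimes\mathbb{B}(K_{1})\bar\otimes\mathbb{C}1_{K_{2}}$ acting standardly on $H\otimes K$, applies Theorem \ref{UW4.1.3} there (together with $R_{\infty}^{\omega}\bar\otimes\mathbb{B}(K_{1})\bar\otimes\mathbb{C}1_{K_{2}}\cong Q^{\omega}$ for $Q\cong R_{\infty}$), and then descends to $N\in\overline{\mathcal{F}_{\mathrm{inj}}}$ via \cite[Lemma 2.4]{HW2}. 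Your up-front reduction is morally the same, but needs exactly this lemma to justify that membership in $\overline{\mathcal{F}_{\mathrm{inj}}}$ depends only on the isomorphism class of $N$.
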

\begin{proof}
(i)$\Rightarrow$(ii): By \cite[Theorem 2.10 (ii)]{HW2}, $\mathcal{F}_{{\rm{III}}_1}\cap \mathcal{F}_{\rm{inj}}$ is dense in $\mathcal{F}_{\rm{inj}}$. Therefore condition (i) implies that there exists a sequence $(M_n)_n$ of injective type III$_1$ factors on $H$ such that $M_n\to N$. Then as $M_n\cong R_{\infty}\ (n\in \mathbb{N})$, by Corollary \ref{UW4.1.2}, there exists a sequence $(\psi_n)_n\subset S_{\rm{nf}}(R_{\infty})$ such that there exists an embedding $i\colon N\to (R_{\infty},\psi_n)^{\omega}$ and a normal faithful conditional expectation $\varepsilon\colon (R_{\infty},\psi_n)^{\omega}\to i(N)$. By \cite[Theorem 6.11]{AndoHaagerup}, we have $(R_{\infty},\psi_n)^{\omega}\cong R_{\infty}^{\omega}$. Therefore (ii) holds. 
(ii)$\Rightarrow$(i): Let $K_1,K_2$ be separable infinite-dimensional Hilbert spaces such that $\widetilde{N}:=N\overline{\otimes}\mathbb{B}(K_1)\overline{\otimes}\mathbb{C}1_{K_2}$ acts standardly on $H\otimes K$, $K:=K_1\otimes K_2$. Then we obtain an embedding $i'=i\otimes \text{id}\otimes \text{id}\colon \widetilde{N}\to Q^{\omega}$, $Q:=R_{\infty}\overline{\otimes}\mathbb{B}(K_1)\overline{\otimes}\mathbb{C}1_{K_2}$ and a normal faithful conditional expectation $\varepsilon'=\varepsilon\otimes \text{id}\otimes \text{id}\colon Q^{\omega}\to i'(\widetilde{N})$. Here we used the fact that $Q^{\omega}\cong R_{\infty}^{\omega}\overline{\otimes}\mathbb{B}(K_1)\overline{\otimes}\mathbb{C}1_{K_2}$ (cf. \cite[Lemma 2.8]{MaTo}). Since $Q,\widetilde{N}\in \text{vN}^{\text{st}}(H\otimes K)$ (note that $Q$ is a type III factor), by Theorem \ref{UW4.1.3}, there exist $(w_n)_n\subset \mathcal{U}(H\otimes K)$ such that $w_n^*Qw_n\to \widetilde{N}$ in vN$(H\otimes K)$. In particular, as $Q\cong R_{\infty}$, $\widetilde{N}\in \overline{\mathcal{F}_{\rm{inj}}}$ holds.
Choose $v_0\in \mathcal{U}(H\otimes K,H)$. Then by \cite[Lemma 2.4]{HW2}, there exist $(u_n)_n\subset \mathcal{U}(H\otimes K)$ such that $v_0u_n^*\widetilde{N}u_nv_0^*\to N$ in vN$(H)$. As $v_0u_n^*\widetilde{N}u_nv_0^*\in \overline{\mathcal{F}_{\rm{inj}}}$, this shows that $N\in \overline{\mathcal{F}_{\rm{inj}}}$ holds.\\
(i)$\Leftrightarrow$(iii) holds similarly, again using \cite[Corollary 2.11]{HW2} that $\mathcal{F}_{{\rm{III}}_\lambda}\cap \mathcal{F}_{\rm{inj}}$ is dense in $\mathcal{F}_{\rm{inj}}$.\\
(i)$\Rightarrow$(iv): Assume $N\in \overline{\mathcal{F}_{\rm{inj}}}$. Again by (the proof of) \cite[Corollary 2.11]{HW2}, the set $\mathcal{F}_{{\rm{I}}_{{\rm{fin}}}}$ of finite type I factors is dense in $\mathcal{F}_{\rm{inj}}$. 
Therefore there is $\{k_n\}_{n=1}^{\infty}\subset \mathbb{N}$ and $N_n\in {\rm{vN}}(H)$ with $N_n\cong M_{k_n}(\mathbb{C})\ (n\in \mathbb{N})$ such that $N_n\to N$ in vN$(H)$. Then by Theorem \ref{UW4.1.1}, there is $\varphi_n\in S_{{\rm{nf}}}(M_{k_n}(\mathbb{C}))\ (n\in \mathbb{N})$, an embedding $i\colon N\to (M_{k_n}(\mathbb{C}),\varphi_n)^{\omega}$, and a normal faithful conditional expectation $\varepsilon\colon (M_{k_n}(\mathbb{C}),\varphi_n)^{\omega}\to i(N)$. (iv)$\Rightarrow$(ii): Assume (iv).  Choose $\psi\in S_{\rm{nf}}(R_{\infty})$. Then for each $n\in \mathbb{N}$, we haven an embedding $i_n\colon M_{k_n}(\mathbb{C})\to R_{\infty}\otimes M_{k_n}(\mathbb{C})$ given by $x\mapsto 1\otimes x$ and a normal faithful conditional expectation $\varepsilon_n\colon R_{\infty}\otimes M_{k_n}(\mathbb{C})\to M_{k_n}(\mathbb{C})$ given by the left slice-map of $\psi$ ($x\otimes y\mapsto \psi(x)y$). Since $R_{\infty}\otimes M_{k_n}(\mathbb{C})\cong R_{\infty}$ we may regard $i_n\colon M_{k_n}(\mathbb{C})\to R_{\infty}$ and $\varepsilon_n\colon R_{\infty}\to M_{k_n}(\mathbb{C})$. Let $\tilde{\varphi}_n:=\varphi_n\circ \varepsilon_n$. 
Then by taking the ultraproducts, we have an embedding $i^{\omega}:=(i_n)^{\omega}\colon (M_{k_n}(\mathbb{C}),\varphi_n)^{\omega}\to (R_{\infty},\tilde{\varphi}_n)^{\omega}\cong R_{\infty}^{\omega}$ (cf. \cite[Theorem 6.11]{AndoHaagerup}) and a normal faithful conditional expectation $\varepsilon^{\omega}\colon R_{\infty}^{\omega}\to i^{\omega}((M_{k_n}(\mathbb{C}),\varphi_n)^{\omega})$. Therefore $i^{\omega}\circ i\colon N\to R_{\infty}^{\omega}$ and $\varepsilon\circ \varepsilon^{\omega}\colon R_{\infty}^{\omega}\to i^{\omega}\circ i(N)$ works.
\end{proof}
\begin{remark}
Nou \cite{Nou} has shown that $q$-deformed Araki-Woods algebras (in the sense of Hiai \cite{Hiai}, see also \cite{Shlyakhtenko}) have QWEP, whence by the above theorem they are embeddable into $R_{\infty}^{\omega}$ within the range of normal faithful conditional expectations. 
\end{remark}
Fraha-Hart-Sherman \cite{FHS} proved, using model theory, that there exists a II$_1$ factor $M$ with separable predual such that every $N\in \mathcal{F}_{{\rm{II}}_1}$ is embeddable into $M^{\omega}$. 
We show the analogous result for a type III$_1$ factor using the Effros-Mar\'echal topology instead.
\begin{corollary}\label{cor: type III poorman's solution}
There exists a type {\rm{III}}$_1$ factor $M$ with separable predual such that for every $N\in {\rm{vN}}(H)$, there is an embedding $i\colon N\to M^{\omega}$ and a normal faithful conditional expectation $\varepsilon\colon M^{\omega}\to i(N)$.
\end{corollary}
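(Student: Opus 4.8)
The plan is to produce a single type III$_1$ factor $\mathcal{M}$ with separable predual whose isomorphic copies are dense in $\mathrm{vN}(H)$, and then to read off the conclusion from Corollary \ref{UW4.1.2}. Throughout, $\mathrm{Iso}(\mathcal{M})$ denotes the set of $P\in\mathrm{vN}(H)$ with $P\cong\mathcal{M}$.

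First I would fix a countable dense subset $\{N_k\}_{k=1}^{\infty}$ of the Polish space $\mathrm{vN}(H)$ and, for each $k$, a state $\varphi_k\in S_{\rm{nf}}(N_k)$. Set $P:=\overline{\bigotimes}_{k=1}^{\infty}(N_k,\varphi_k)$, the von Neumann tensor product with respect to the product state $\bigotimes_k\varphi_k$, so that each $N_k$ sits in $P$ as a tensor leg with a normal faithful conditional expectation $P\to N_k$ obtained by slicing the remaining legs with the $\varphi_j$. To force factoriality and type III$_1$ I would put $\mathcal{M}:=R_{\infty}\,\overline{\otimes}\,\big(P*L(\mathbb{F}_2)\big)$, where $P*L(\mathbb{F}_2)$ is the reduced free product taken with respect to $\bigotimes_k\varphi_k$ and the trace. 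By Ueda's work on free products this free product is a factor with separable predual; tensoring with $R_{\infty}$ makes $\mathcal{M}$ a type III$_1$ factor; and composing the canonical free-product expectation onto $P$, the slice expectation $P\to N_k$, and a state of $R_{\infty}$ yields a normal faithful conditional expectation $F_k\colon\mathcal{M}\to N_k$ for every $k$.

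The heart of the argument is to show that each member $N_k$ of the dense family lies in the closure $\overline{\mathrm{Iso}(\mathcal{M})}$. I would first embed $N_k$ into $\mathcal{M}^{\omega}$ with a normal faithful conditional expectation purely algebraically: applying Corollary \ref{UW4.1.2} to the constant sequence $N_k\to N_k$ produces states $\psi_n\in S_{\rm{nf}}(N_k)$, an embedding $i\colon N_k\to(N_k,\psi_n)^{\omega}$, and a normal faithful conditional expectation $\varepsilon$ onto $i(N_k)$. Setting $\widetilde\psi_n:=\psi_n\circ F_k\in S_{\rm{nf}}(\mathcal{M})$, the state-preserving inclusion $N_k\subset\mathcal{M}$ together with $F_k$ induces, by the functoriality of the Ocneanu ultraproduct \cite{AndoHaagerup}, an inclusion $(N_k,\psi_n)^{\omega}\hookrightarrow(\mathcal{M},\widetilde\psi_n)^{\omega}$ with a normal faithful conditional expectation $(F_k)^{\omega}$ onto it. Since $\mathcal{M}$ is of type III$_1$, the Ocneanu ultraproduct is independent of the chosen states, so $(\mathcal{M},\widetilde\psi_n)^{\omega}\cong\mathcal{M}^{\omega}$ (the general III$_1$ analogue of \cite[Theorem 6.11]{AndoHaagerup}). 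Composing, $N_k$ embeds into $\mathcal{M}^{\omega}$ with the normal faithful conditional expectation $\varepsilon\circ(F_k)^{\omega}$. Then Theorem \ref{UW4.1.3}, applied after the standardization trick from the proof of Theorem \ref{UW4.1.4}(ii)$\Rightarrow$(i) — replace $N_k,\mathcal{M}$ by $N_k\,\overline{\otimes}\,\mathbb{B}(K_1)$ and $\mathcal{M}\,\overline{\otimes}\,\mathbb{B}(K_1)\cong\mathcal{M}$, then descend via \cite[Lemma 2.4]{HW2} — yields isomorphic copies of $\mathcal{M}$ converging to $N_k$ in $\mathrm{vN}(H)$; that is, $N_k\in\overline{\mathrm{Iso}(\mathcal{M})}$.

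Finally I would close up: the set $\overline{\mathrm{Iso}(\mathcal{M})}$ is closed and contains the dense family $\{N_k\}$, hence equals $\mathrm{vN}(H)$. Thus for an arbitrary $N\in\mathrm{vN}(H)$ there are copies $\mathcal{M}_n\cong\mathcal{M}$ with $\mathcal{M}_n\to N$, and Corollary \ref{UW4.1.2} (again using $(\mathcal{M},\psi_n)^{\omega}\cong\mathcal{M}^{\omega}$ for the III$_1$ factor $\mathcal{M}$) delivers an embedding $i\colon N\to\mathcal{M}^{\omega}$ with a normal faithful conditional expectation $\varepsilon\colon\mathcal{M}^{\omega}\to i(N)$; taking $M:=\mathcal{M}$ this is exactly the assertion. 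I expect the main obstacles to be the two structural inputs: verifying that $\mathcal{M}$ can be simultaneously a type III$_1$ factor, have separable predual, and admit normal faithful conditional expectations onto every $N_k$ (which is why the free-product device is used), and making precise both the state-independence $(\mathcal{M},\widetilde\psi_n)^{\omega}\cong\mathcal{M}^{\omega}$ and the normality of the ultraproduct expectation $(F_k)^{\omega}$, which rest on the machinery of \cite{AndoHaagerup}.
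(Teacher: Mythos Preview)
Your argument is correct, but it is considerably more elaborate than the paper's, and the extra machinery can be avoided. The paper exploits \cite[Theorem 2.10 (i)]{HW2}: type III$_1$ factors are already dense in ${\rm vN}(H)$. One therefore takes the dense family $\{M_n\}$ itself to consist of type III$_1$ factors and sets $M=\bigotimes_n(M_n,\varphi_n)$, which is then automatically a type III$_1$ factor with separable predual; no free-product device or tensoring with $R_\infty$ is needed. Moreover the paper never passes through ``${\rm Iso}(M)$ is dense'': for an arbitrary $N$ one picks a subsequence $M_{n_k}\to N$, applies Theorem~\ref{UW4.1.1} to embed $N$ into $(M_{n_k},\psi_{n_k})^{\omega}$ with a normal faithful conditional expectation, uses Connes--St\o rmer transitivity to replace $\psi_{n_k}$ by $\varphi_{n_k}$, and then embeds $(M_{n_k},\varphi_{n_k})^{\omega}$ into $M^{\omega}$ via the tensor legs. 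Your route --- embed $N_k\hookrightarrow\mathcal{M}^{\omega}$, invoke Theorem~\ref{UW4.1.3} plus the standardization trick to get $N_k\in\overline{{\rm Iso}(\mathcal{M})}$, close up, and only then apply Corollary~\ref{UW4.1.2} --- makes an unnecessary round trip between the ultraproduct and Effros--Mar\'echal pictures. It also imports Ueda's free-product factoriality theorem, which lies outside the paper's toolkit. What your approach does buy is a little extra generality (you never use that the dense $N_k$ can be chosen III$_1$), and you obtain the side statement that ${\rm Iso}(\mathcal{M})$ is dense in ${\rm vN}(H)$; but for the corollary as stated the paper's proof is shorter and self-contained.
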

\begin{proof}
Let $\{M_n\}_{n=1}^{\infty}$ be a sequence of type III$_1$ factors on $H$ which is dense in ${\rm{vN}}(H)$(see \cite[Theorem 2.10 (i)]{HW2}). Choose $\varphi_n\in S_{{\rm{nf}}}(M_n)$ for each $n\in \mathbb{N}$. Define $M:=\bigotimes_{n\in \mathbb{N}}(M_n,\varphi_n)$. Then for every $N\in {\rm{vN}}(H)$, there exists a sequence $\{n_k\}_{k=1}^{\infty}$ such that $M_{n_k}\to N\ (k\to \infty)$. Then by Theorem \ref{UW4.1.1}, there is $\psi_n\in S_{\rm{nf}}(M_n)$, an embedding $i'\colon N\to (M_{n_k},\psi_{n_k})^{\omega}$ and a normal faithful conditional expectation $\varepsilon'\colon (M_{n_k},\psi_{n_k})^{\omega}\to i'(N)$.  
But since each $M_n$ is a type III$_1$ factor, $(M_{n_k},\psi_{n_k})^{\omega}\cong (M_{n_k},\varphi_{n_k})^{\omega}$ holds by Connes-St\o rmer transitivity \cite{CS}. There exist an embedding $j_k\colon M_{n_k}\to M$ (into $n_k$-th tensor component of $M$) such that $\varphi\circ j_k=\varphi_{n_k}$, where $\varphi=\bigotimes_{n\in \mathbb{N}}\varphi_n$ and a normal faithful conditional expectation $\varepsilon_k\colon M\to M_{n_k}$. Therefore there is an embedding $j\colon (M_{n_k},\varphi_{n_k})^{\omega}\to (M,\varphi)^{\omega}\cong M^{\omega}$ and a normal faithful conditional expectation $\varepsilon\colon M^{\omega}\to j((M_{n_k},\varphi_{n_k})^{\omega})$ given by the ultraproduct of $\{j_k\}_{k=1}^{\infty}$ and $\{\varepsilon_k\}_{k=1}^{\infty}$. Then $i=j\circ i'\colon N\to M^{\omega}$ and $\varepsilon'\circ \varepsilon\colon M^{\omega}\to i(N)$ works.
\end{proof}

Raynaud showed \cite[Proposition 1.14]{Raynaud} that $\prod^{\mathcal{U}}\mathbb{B}(H)$ is not semifinite where $H$ is infinite-dimensional and $\mathcal{U}$ is a free ultrafilter on the set $I$ of all pairs $(E,\varepsilon)$ where $E$ is a finite-dimensional subspace of $\mathbb{B}(H)_*$ and $\varepsilon>0$ partially ordered by $(E_1,\varepsilon_1)\le (E_2,\varepsilon_2)\Leftrightarrow \varepsilon_1\ge \varepsilon_2$ and $E_1\subset E_2$. The proof is based on \cite[Lemma 1.12, Lemma 1.13]{Raynaud} that $\mathbb{B}(H)^{**}$ is not semifinite, and (by local reflexivity) there is an embedding  $i\colon \mathbb{B}(H)^{**}\to \prod^{\mathcal{U}}\mathbb{B}(H)$ and a normal faithful conditional expectation $\varepsilon\colon \prod^{\mathcal{U}}\mathbb{B}(H)\to i(\mathbb{B}(H)^{**})$. Therefore the large index set was required in his argument. 
We show that the same conclusion holds when $I$ is replaced by $\mathbb{N}$ using Effros-Mar\'echal topology.
\begin{corollary}\label{cor: Raynaud for index N}
$\prod^{\omega}\mathbb{B}(H)$ is not semifinite.
\end{corollary}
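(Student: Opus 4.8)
The plan is to rerun Raynaud's argument over the index set $\mathbb{N}$: we place a non-semifinite von Neumann algebra with separable predual---a copy of the injective type ${\rm III}_{1}$ factor $R_{\infty}$---inside a corner of $\prod^{\omega}\mathbb{B}(H)$ as the range of a normal faithful conditional expectation. This is exactly the point at which the Effros--Mar\'echal machinery of this section takes over the role played in Raynaud's argument by local reflexivity together with a large index set.

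Since $R_{\infty}$ is injective, a copy of it on $H$ belongs to $\mathcal{F}_{\rm inj}\subseteq\overline{\mathcal{F}_{\rm inj}}$, so Theorem \ref{UW4.1.4} (the implication (i)$\Rightarrow$(iv)) supplies $\{k_{n}\}_{n=1}^{\infty}\subset\mathbb{N}$, normal faithful states $\varphi_{n}$ on $M_{k_{n}}(\mathbb{C})$, an embedding $i\colon R_{\infty}\to(M_{k_{n}}(\mathbb{C}),\varphi_{n})^{\omega}$, and a normal faithful conditional expectation $\varepsilon\colon(M_{k_{n}}(\mathbb{C}),\varphi_{n})^{\omega}\to i(R_{\infty})$. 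Next I would fatten the matrix blocks to copies of $\mathbb{B}(H)$, just as in the proof of the implication (iv)$\Rightarrow$(ii) of Theorem \ref{UW4.1.4}, but with $\mathbb{B}(H)$ in place of $R_{\infty}$: fix $\rho\in S_{\rm nf}(\mathbb{B}(H))$ and, for each $n$, use the identification $\mathbb{B}(H)\cong M_{k_{n}}(\mathbb{C})\overline{\otimes}\mathbb{B}(H)$; then $j_{n}\colon x\mapsto x\otimes 1$ is a unital embedding of $M_{k_{n}}(\mathbb{C})$ into $\mathbb{B}(H)$, the slice map $\varepsilon_{n}:={\rm id}\otimes\rho$ is a normal faithful conditional expectation of $\mathbb{B}(H)$ onto $j_{n}(M_{k_{n}}(\mathbb{C}))$, and with $\hat{\varphi}_{n}:=\varphi_{n}\circ\varepsilon_{n}\in S_{\rm nf}(\mathbb{B}(H))$ one has $\hat{\varphi}_{n}\circ j_{n}=\varphi_{n}$ and $\hat{\varphi}_{n}\circ\varepsilon_{n}=\hat{\varphi}_{n}$. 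Passing to Ocneanu ultraproducts (using the same functoriality under state-preserving embeddings and conditional expectations that is used for $R_{\infty}$ in the proof of Theorem \ref{UW4.1.4}) yields an embedding $(j_{n})^{\omega}\colon(M_{k_{n}}(\mathbb{C}),\varphi_{n})^{\omega}\to(\mathbb{B}(H),\hat{\varphi}_{n})^{\omega}$ and a normal faithful conditional expectation $(\varepsilon_{n})^{\omega}$ in the reverse direction; composing, $(j_{n})^{\omega}\circ i$ embeds $R_{\infty}$ into $(\mathbb{B}(H),\hat{\varphi}_{n})^{\omega}$ and $\varepsilon\circ(\varepsilon_{n})^{\omega}$ is a normal faithful conditional expectation of $(\mathbb{B}(H),\hat{\varphi}_{n})^{\omega}$ onto its image.

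To conclude, I would invoke \cite[Proposition 3.15]{AndoHaagerup}, by which $(\mathbb{B}(H),\hat{\varphi}_{n})^{\omega}\cong p\bigl(\prod^{\omega}\mathbb{B}(H)\bigr)p$ for a projection $p\in\prod^{\omega}\mathbb{B}(H)$, i.e.\ $(\mathbb{B}(H),\hat{\varphi}_{n})^{\omega}$ is a corner of $\prod^{\omega}\mathbb{B}(H)$. Suppose, for contradiction, that $\prod^{\omega}\mathbb{B}(H)$ is semifinite. Then its corner $(\mathbb{B}(H),\hat{\varphi}_{n})^{\omega}$ is semifinite (restrict a faithful normal semifinite trace to $p\bigl(\prod^{\omega}\mathbb{B}(H)\bigr)p$), and hence so is the range of the conditional expectation produced above---this last implication being precisely the fact on which Raynaud's argument rests. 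But that range is a copy of $R_{\infty}$, which, being of type ${\rm III}_{1}$, carries no faithful normal semifinite trace; contradiction. Hence $\prod^{\omega}\mathbb{B}(H)$ is not semifinite.

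I expect the main obstacle to be the first step---placing a non-semifinite von Neumann algebra with separable predual inside $\prod^{\omega}\mathbb{B}(H)$ as the range of a normal faithful conditional expectation---which is supplied by the approximation results of this section (Theorem \ref{UW4.1.4}, and behind it Theorems \ref{UW4.1.1} and \ref{UW4.1.3}) and takes over from Raynaud's use of a large index set. The remaining steps, namely the tensor fattening and the bookkeeping with semifiniteness of corners and of ranges of conditional expectations, are routine.
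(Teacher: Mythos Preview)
Your argument is correct and follows the same overall strategy as the paper: embed a type III factor into an Ocneanu ultraproduct $(\mathbb{B}(H),\psi_n)^{\omega}$ as the range of a normal faithful conditional expectation, identify this with a corner of $\prod^{\omega}\mathbb{B}(H)$ via \cite[Proposition 3.15]{AndoHaagerup}, and then invoke Tomiyama's theorem \cite{Tomiyama} that semifiniteness passes to the range of a normal conditional expectation.

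The only difference is in how you reach $(\mathbb{B}(H),\psi_n)^{\omega}$. You first pass through matrices via Theorem~\ref{UW4.1.4}(iv), then ``fatten'' $M_{k_n}(\mathbb{C})$ to $M_{k_n}(\mathbb{C})\overline{\otimes}\mathbb{B}(H)\cong\mathbb{B}(H)$ by tensoring, exactly mimicking the (iv)$\Rightarrow$(ii) step of Theorem~\ref{UW4.1.4}. The paper is more direct: it uses that $\mathcal{F}_{{\rm I}_{\infty}}$ itself is dense in $\mathcal{F}_{\rm inj}$ (from the proof of \cite[Corollary~2.11]{HW2}), so an injective type III factor is approximated in vN$(H)$ directly by copies of $\mathbb{B}(H)$, and a single application of Theorem~\ref{UW4.1.1} already yields the embedding into $(\mathbb{B}(H),\varphi_n)^{\omega}$ with conditional expectation. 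Your detour through finite matrices is unnecessary but harmless; the paper's route simply collapses your two approximation steps into one.
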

\begin{proof} Let $M\in \text{vN}(H)$ be an injective type III factor. Then by the proof of \cite[Corollary 2.11]{HW2}, it is in the closure of the set $\mathcal{F}_{{\rm{I}}_{\infty}}$ of type I$_{\infty}$ factors on $H$. Then by Theorem \ref{UW4.1.1}, there is a sequence $\{\varphi_n\}_{n=1}^{\infty}\subset S_{\rm{nf}}(\mathbb{B}(H))$, an embedding $i\colon M\to (\mathbb{B}(H),\varphi_n)^{\omega}$, and a normal faithful conditional expectation $\varepsilon\colon (\mathbb{B}(H),\varphi_n)^{\omega}\to i(M)$. In particular, $(\mathbb{B}(H),\varphi_n)^{\omega}$ is not semifinite by \cite[Theorem 3]{Tomiyama}. Then by \cite[Proposition 3.15]{AndoHaagerup}, $\prod^{\omega}\mathbb{B}(H)$ has a non-semifinite corner $p(\prod^{\omega}\mathbb{B}(H))p\cong (\mathbb{B}(H),\varphi_n)^{\omega}$, $p={\rm{supp}}((\varphi_n)_{\omega})$.
\end{proof}    
\section{Characterizations of QWEP von Neumann Algebras}
In this last section, we establish two characterizations of von Neumann algebras with QWEP. Recall first the definition of QWEP.
\begin{definition}\label{def: WEP, QWEP}
Let $A$ be a C$^*$-algebra.
\begin{list}{}{}
\item[{\rm{(1)}}]
$A$ is said to have the {\it weak expectation property} (WEP for short), if for any faithful representation $A\subset \mathbb{B}(H)$, there exists a unital completely positive (u.c.p. for short) map $\Phi\colon \mathbb{B}(H)\to A^{**}$ such that $\Phi(a)=a$ for all $a\in A$. 
\item[{\rm{(2)}}]
$A$ is said to have the {\it quotient weak expectation property} (QWEP for short), if there exists a surjective *-homomorphism from a C$^*$-algebra $B$ with WEP onto $A$.
\end{list}
\end{definition}
Next theorem characterizes QWEP von Neumann algebras in terms of Effros-Mar\'echal topology.
\begin{theorem}\label{thm: QWEP iff closure of injective factors}
Let $H$ be a separable Hilbert space, and let $M\in {\rm{vN}}(H)$. The following conditions are equivalent. 
\begin{list}{}{}
\item[{\rm{(1)}}] $M\in \overline{\mathcal{F}_{\rm{inj}}}$.
\item[{\rm{(2)}}] $M$ has {\rm{QWEP}}.
\end{list}
\end{theorem}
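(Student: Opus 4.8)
The plan is to prove both implications using the characterizations already assembled in Theorem~\ref{UW4.1.4}, so that the work reduces to connecting QWEP with the Ocneanu-ultraproduct conditions (ii)--(iv) there. For the implication $(1)\Rightarrow(2)$, I would start from condition (ii) of Theorem~\ref{UW4.1.4}: there is an embedding $i\colon M\to R_\infty^\omega$ together with a normal faithful conditional expectation $\varepsilon\colon R_\infty^\omega\to i(M)$. The key facts to invoke are that $R_\infty$ is injective (hence has WEP, and in particular QWEP), that the property QWEP passes to Ocneanu ultrapowers — so $R_\infty^\omega$ is QWEP — and that QWEP is inherited by von Neumann subalgebras that are the range of a normal (faithful) conditional expectation. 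Chaining these, $i(M)\cong M$ is QWEP. The passage ``$R_\infty$ QWEP $\Rightarrow R_\infty^\omega$ QWEP'' is the point that needs the ultraproduct machinery of \cite{AndoHaagerup}: one realizes $R_\infty^\omega$ as a corner $p(\prod^\omega R_\infty)p$ and uses that a corner of a QWEP von Neumann algebra is QWEP, together with the fact (essentially Kirchberg's, see \cite{Ozawa}) that an $\ell^\infty$-direct product / Groh--Raynaud ultraproduct of QWEP algebras is QWEP.

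For the converse $(2)\Rightarrow(1)$, assume $M$ has QWEP. The natural route is again through Theorem~\ref{UW4.1.4}: it suffices to produce an embedding of $M$ into some $R_\lambda^\omega$ (or into $(M_{k_n}(\mathbb{C}),\varphi_n)^\omega$) in the range of a normal faithful conditional expectation. Here I would use the structure theory of QWEP von Neumann algebras. First reduce to the $\sigma$-finite case by a standard central-decomposition / matrix-amplification argument (if $M$ has QWEP so does a $\sigma$-finite algebra into which $M$ sits with a normal faithful conditional expectation; conversely one can tensor with $\mathbb{B}(K)$ as in the proof of $(ii)\Rightarrow(i)$ in Theorem~\ref{UW4.1.4}). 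For a $\sigma$-finite QWEP algebra $M$ with normal faithful state $\varphi$, the point is that $M$ embeds with expectation into an ultraproduct of finite-dimensional algebras: this is where one needs that QWEP for von Neumann algebras is equivalent to a ``matricial approximation in the sense of $2$-norm'' statement, which is precisely what condition (6)/(iv) is designed to encode. Concretely, QWEP of $M$ gives a surjection $B\to M$ with $B$ WEP; using that WEP C$^*$-algebras are exactly those that are QWEP and locally finite-dimensionally embeddable (Kirchberg), one extracts, for each finite set of vectors in $\mathcal{P}_M^\natural$ and each $\varepsilon$, a finite-dimensional matricial model matching the Gram data up to $\varepsilon$; assembling these along $\omega$ produces the desired embedding of $M$ into $(M_{k_n}(\mathbb{C}),\varphi_n)^\omega$ with a normal faithful conditional expectation, and then Theorem~\ref{UW4.1.4}(iv)$\Rightarrow$(i) finishes.

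I expect the main obstacle to be the converse direction, specifically the step that upgrades the abstract QWEP hypothesis on the \emph{von Neumann algebra} $M$ to a concrete matricial (or $R_\lambda^\omega$-) embedding \emph{in the range of a normal faithful conditional expectation}. The subtlety is twofold: one must control the modular data (the states $\varphi_n$ on the matrix blocks) so that the Ocneanu — not merely the Groh--Raynaud — ultraproduct receives the embedding, and one must produce the conditional expectation, not just the embedding. The technology for this is the analysis of the natural cone $\mathcal{P}_M^\natural$ together with the Araki--Powers--St\o rmer inequality recalled in \S2.2, which lets one translate closeness of states into closeness of cone vectors and back; in fact this is exactly why condition (6) of the main Theorem is phrased in terms of $\mathcal{P}_M^\natural$ and $\mathrm{tr}_k(a_ia_j)$, and I would anticipate that the present theorem is really proved \emph{jointly} with the equivalence $(6)\Leftrightarrow(1)$ established in \S4, with Theorem~\ref{UW4.1.4} serving as the bridge to the topological condition $(1)$. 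A cleaner alternative, if available, is to cite a known equivalence ``$M$ has QWEP $\iff$ $M$ embeds with expectation into $R_\infty^\omega$'' for $\sigma$-finite $M$ and then remove $\sigma$-finiteness by the amplification trick; but absent that, the hands-on cone argument sketched above is the route I would take.
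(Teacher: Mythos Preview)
Your argument for $(1)\Rightarrow(2)$ is correct and matches the paper's: Theorem~\ref{UW4.1.4}(ii) gives $i\colon M\hookrightarrow R_\infty^\omega$ with a normal faithful conditional expectation, and $R_\infty^\omega$ is QWEP because $\ell^\infty(\mathbb{N},R_\infty)$ is, its quotient $(R_\infty)_\omega$ is, the bidual is, and $\prod^\omega R_\infty$ is a corner of that bidual with $R_\infty^\omega$ a further corner (this is exactly Lemma~\ref{lem: Ocneanu UP has QWEP}).

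For $(2)\Rightarrow(1)$ there is a genuine gap. Your proposed route through the natural-cone / matricial approximation is circular in the paper's logic: condition~(6) of the main theorem (Theorem~\ref{thm: QWEP via approximation of standard form}) is \emph{derived from} the present theorem, not the other way around---its direction ``QWEP $\Rightarrow$ (6)'' begins by invoking Theorem~\ref{thm: QWEP iff closure of injective factors} to get $M\in\overline{\mathcal{F}_{\rm inj}}$. Independently of circularity, your sketch ``QWEP gives a surjection from a WEP algebra $B$; WEP algebras are locally finite-dimensionally embeddable, so extract matricial models matching the Gram data of cone vectors'' does not produce an embedding into an \emph{Ocneanu} ultraproduct with a conditional expectation: nothing in that outline controls the modular data of the approximants or yields the expectation. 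Your ``cleaner alternative'' of citing a known equivalence ``QWEP $\iff$ embeds with expectation into $R_\infty^\omega$'' is precisely what this theorem together with Theorem~\ref{UW4.1.4} is establishing, so it is not available as input.

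The paper's key idea, which your proposal misses, is a reduction to the \emph{finite} von Neumann algebra case via the Haagerup--Junge--Xu method \cite{HJX}: form $N=M\rtimes_{\sigma^\varphi}\mathbb{Z}[\tfrac12]$, which has QWEP by Takesaki duality and amenability of $\widehat{\mathbb{Z}[\tfrac12]}$, and exhaust $N$ by an increasing sequence of \emph{finite} $\sigma^{\widehat\varphi}$-invariant subalgebras $N_n$ each with a normal faithful $\widehat\varphi$-preserving expectation $\varepsilon_n\colon N\to N_n$. Each $N_n$ then has QWEP, and for finite algebras Kirchberg's classical theorem gives an embedding into $R^\omega$; composing with $R^\omega\hookrightarrow R_\infty^\omega$ (with expectation) and applying Theorem~\ref{UW4.1.4} yields $N_n\in\overline{\mathcal{F}_{\rm inj}}$. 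Since $N_n\to N$ in the Effros--Mar\'echal topology (by the argument before \cite[Corollary~2.12]{HW1}), one gets $N\in\overline{\mathcal{F}_{\rm inj}}$, and finally $M\in\overline{\mathcal{F}_{\rm inj}}$ using the expectation $N\to\pi(M)$ and Theorem~\ref{UW4.1.4} once more. The crossed-product / HJX step is the missing ingredient in your proposal.
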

Although it is not strightforward to prove that the Ocneanu ultraproduct of QWEP von Neumann algebras has QWEP by the definition, it is easy to do so by passing to the Groh-Raynaud ultraproduct. 
\begin{lemma}\label{lem: Ocneanu UP has QWEP}
Let $\{(M_n,\varphi_n)\}_{n=1}^{\infty}$ be a sequence of $\sigma$-finite von Neumann algebras with faithful normal states. Assume that $M_n$ has {\rm{QWEP}} for each $n\in \mathbb{N}$. Then $(M_n,\varphi_n)^{\omega}$ has {\rm{QWEP}}.
\end{lemma}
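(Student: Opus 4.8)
The plan is to sidestep the defining presentation $(M_n,\varphi_n)^{\omega}=\mathcal{M}^{\omega}/\mathcal{I}_{\omega}$ altogether. That presentation only exhibits the Ocneanu ultraproduct as a quotient of the C$^*$-subalgebra $\mathcal{M}^{\omega}\subset \ell^{\infty}(\mathbb{N},M_n)$, and since QWEP does not pass to arbitrary C$^*$-subalgebras it is of no direct use. Instead I will use that the Groh-Raynaud ultraproduct $\prod^{\omega}M_n$ is built from $\ell^{\infty}(\mathbb{N},M_n)$ only by operations under which QWEP is stable, and that $(M_n,\varphi_n)^{\omega}$ sits inside it as a corner. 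So there are two steps: (a) prove that $\prod^{\omega}M_n$ has QWEP; (b) deduce the claim from the identification $(M_n,\varphi_n)^{\omega}\cong p(\prod^{\omega}M_n)p$ of \cite[Proposition 3.15]{AndoHaagerup}.

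For (a), put $A:=\ell^{\infty}(\mathbb{N},M_n)$. Each $M_n$ has QWEP, and QWEP is preserved under $\ell^{\infty}$-direct products, so $A$ has QWEP; moreover QWEP passes to the bidual, so the von Neumann algebra $A^{**}$ has QWEP (both are classical; see \cite{Ozawa}). The $*$-representation $\pi_{\omega}\colon A\to \mathbb{B}(H_{\omega})$ recalled in \S2 extends to a normal $*$-homomorphism $\bar{\pi}_{\omega}\colon A^{**}\to \mathbb{B}(H_{\omega})$ whose range is the $\sigma$-weak closure of $\pi_{\omega}(A)$; that is,
\[\bar{\pi}_{\omega}(A^{**})=\pi_{\omega}(A)''=\prod^{\omega}M_n.\]
Hence $\prod^{\omega}M_n$ is the image of the QWEP von Neumann algebra $A^{**}$ under a normal $*$-homomorphism, i.e.\ a quotient of $A^{**}$, and therefore has QWEP.

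For (b), \cite[Proposition 3.15]{AndoHaagerup} gives $(M_n,\varphi_n)^{\omega}\cong p(\prod^{\omega}M_n)p$, where $p$ is the support projection of the normal state $\nai{\ \cdot\ \xi_{\omega}}{\xi_{\omega}}$ on $\prod^{\omega}M_n$. Now $pNp$ is the range of the conditional expectation $N\ni x\mapsto pxp$, and QWEP is inherited by such corners of a QWEP von Neumann algebra $N$ (a standard stability property; see \cite{Ozawa}); applying this with $N=\prod^{\omega}M_n$ shows that $(M_n,\varphi_n)^{\omega}$ has QWEP.

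I do not expect a genuine technical obstacle here: the argument rests only on three standard stability properties of QWEP (under $\ell^{\infty}$-products, under the bidual, and under corners), and the substance of the lemma is the structural point — already stressed in the text — that the Groh-Raynaud ultraproduct, in contrast to the Ocneanu ultraproduct, is transparently a normal quotient of $\ell^{\infty}(\mathbb{N},M_n)^{**}$. The one place where a little care is advisable is the corner step; if one prefers not to quote it, one first cuts by the central support of $p$ to reduce to the case that $p$ has full central support in $\prod^{\omega}M_n$, and then uses that QWEP is invariant under the resulting stable isomorphism.
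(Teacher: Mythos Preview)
Your proof is correct and follows essentially the same approach as the paper: both reduce to showing that the Groh--Raynaud ultraproduct $\prod^{\omega}M_n$ has QWEP and then invoke \cite[Proposition~3.15]{AndoHaagerup} to pass to the corner $(M_n,\varphi_n)^{\omega}\cong p(\prod^{\omega}M_n)p$. The only cosmetic difference is the order of operations in step (a): the paper first passes from $\ell^{\infty}(\mathbb{N},M_n)$ to its C$^*$-quotient $(M_n)_{\omega}$, then takes the bidual and realizes $\prod^{\omega}M_n$ as a direct summand of $((M_n)_{\omega})^{**}$, whereas you take the bidual of $\ell^{\infty}(\mathbb{N},M_n)$ first and realize $\prod^{\omega}M_n$ directly as its normal image; both routes use the same stability properties of QWEP and are interchangeable.
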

\begin{proof}
Since $(M_n,\varphi_n)^{\omega}\cong p(\prod^{\omega}M_n)p, p=\text{supp}((\varphi_n)_{\omega})$, it suffices to show that $\prod^{\omega}M_n$ has QWEP. Since each $M_n$ has QWEP, by \cite[Corollary 3.3 (i)]{Kirchberg}, $\ell^{\infty}(\mathbb{N},M_n)$ has QWEP, so does its quotient $(M_n)_{\omega}$. Therefore $((M_n)_{\omega})^{**}$ has QWEP too \cite[Corollary 3.3 (v)]{Kirchberg}. Since $\prod^{\omega}M_n=\pi_{\omega}(\ell^{\infty}(\mathbb{N},M_n))''$ is a corner of $((M_n)_{\omega})^{**}$, it has QWEP. 
\end{proof}

\begin{proof}[Proof of Theorem \ref{thm: QWEP iff closure of injective factors}]
(1)$\Rightarrow$(2): Assume that $M\in \overline{\mathcal{F_{\rm{inj}}}}$. By Theorem \ref{UW4.1.4}, there is an embedding $i\colon M\to R_{\infty}^{\omega}$ with a normal faithful conditional expectation $\varepsilon\colon R_{\infty}^{\omega}\to i(M)$. By Lemma \ref{lem: Ocneanu UP has QWEP}, $R_{\infty}^{\omega}$ has QWEP, whence $M$ has QWEP. 
(2)$\Rightarrow $(1): Let $M\in {\rm{vN}}(H)$ with QWEP. We reduce the problem to the case where von Neumann algebras involved are of finite type using the reduction method from \cite{HJX}. Consider $G=\mathbb{Z}[\frac{1}{2}]=\{\frac{m}{2^n}; m\in \mathbb{Z},n\in \mathbb{N}\}$ as a countable discrete subgroup of $\mathbb{R}$, and fix $\varphi\in S_{\rm{nf}}(M)$. Conside the dual state $\widehat{\varphi}$ on $N\colon=M\rtimes_{\sigma^{\varphi}}G$. There is a canonical embedding $\pi\colon M\to N$, and as $G$ is countable discrete, there is a normal faithful conditional expectation $\varepsilon\colon N\to \pi(M)$. By \cite[Theorem 2.1]{HJX}, there exists an increasing sequence $\{N_n\}_{n=1}^{\infty}$ of finite von Neumann subalgebras of $N$ with $\sigma$so*-dense union, together with a unique normal faithful conditional expectation $\varepsilon_n\colon N\to N_n\ (n\in \mathbb{N})$ such that 
\[\widehat{\varphi}\circ \varepsilon_n=\widehat{\varphi},\ \ \ \sigma_t^{\widehat{\varphi}}\circ \varepsilon_n=\varepsilon_n\circ \sigma_t^{\widehat{\varphi}},\ \ \ (t\in \mathbb{R}).\]
Moreover, it holds that (by \cite[Lemma 2.7]{HJX})
\begin{equation}
\sigma \text{so*-}\lim_{n\to \infty}\varepsilon_n(x)=x,\ \ \ \ x\in N.\label{eq: expectation converges to id}
\end{equation}
Let $\theta$ be the dual action of $\sigma^{\varphi}|_G$ on $N$. Then by Takesaki duality, $N$ is isomorphic to the fixed point subalgebra of the QWEP algebra $N\rtimes_{\theta}\widehat{G}\cong M\overline{\otimes}\mathbb{B}(\ell^2(G))$ by the dual action of $\theta$. Since $\widehat{\widehat{G}}=G$ is amenable, there is a conditional expectation of $N\rtimes_{\theta}\widehat{G}$ onto $N$, so $N$ has QWEP as well. Again by the existence of $\varepsilon_n$, each $N_n$ has QWEP too. Since $N_n$ is of finite type, by \cite[Theorem 4.1]{Kirchberg}, there is an embedding $i_n\colon N_n\to R^{\omega}$ (and a normal faithful conditional expectation $\varepsilon_n^0\colon R^{\omega}\to i_n(N_n)$, which automatically exists).
Note also that there is an embedding $i$ of $R$ into $R\overline{\otimes}R_{\infty}\cong R_{\infty}$ and there is a normal faithful conditional expectation $\varepsilon\colon R\overline{\otimes}R_{\infty}\to R$ given by a right-slice map by some $\varphi\in S_{\rm{nf}}(R_{\infty})$. Let $\tau$ be the unique tracial state on $R$, and let $\psi:=\tau\circ \varepsilon$. Then for any $(x_n)_n\in \ell^{\infty}(\mathbb{N},R)$, $(i(x_n))_n$ is in $\mathcal{M}^{\omega}(R_{\infty})$ (because $i(x_n)\in (R_{\infty})_{\psi})$, whence we have an embedding $i^{\omega}\colon R^{\omega}\to R_{\infty}^{\omega}, (x_n)^{\omega}\mapsto (i(x_n))^{\omega}$, and a normal faithful conditional expectation $\varepsilon^{\omega}\colon R_{\infty}^{\omega}\to R_{\infty}$ given by $(x_n)^{\omega}\mapsto (\varepsilon (x_n))^{\omega}$. Therefore there is an embedding $i_n'=i^{\omega}\circ i_n\colon N_n\to R_{\infty}^{\omega}$ and a normal faithful conditional expectation $\varepsilon_n'=i^{\omega}\circ \varepsilon_n^0\circ (i^{\omega})^{-1}\circ \varepsilon^{\omega}\colon R_{\infty}^{\omega}\to i_n'(N_n)$. Then by Theorem \ref{UW4.1.4}, $N_n\in \overline{\mathcal{F}_{\rm{inj}}}$ holds. Now, as each $N_n$ is in ${\rm{SA}}_{\widehat{\varphi}}(N)$ and $\varepsilon_n$ is the $\sigma^{\widehat{\varphi}}$-preserving conditional expectation, the argument before \cite[Corollary 2.12]{HW1} gives $N_n\to N$ in vN$(H)$. This shows that $N\in \overline{\mathcal{F}_{\rm{inj}}}$. Then by Theorem \ref{UW4.1.4}, there is an embedding $j\colon N\to R_{\infty}^{\omega}$ with a normal faithful conditional expectation $\varepsilon'\colon R_{\infty}^{\omega}\to j(N)$. Therefore we obtain an embedding $i\colon M\stackrel{\pi}{\to}N\stackrel{j}{\to}R_{\infty}^{\omega}$ together with a normal faithful conditional expectation $R_{\infty}^{\omega}\stackrel{\varepsilon'}{\to}j(N)\stackrel{j\circ \varepsilon \circ j^{-1}}{\to}i(M)$. Again by Theorem \ref{UW4.1.4}, $M\in \overline{\mathcal{F}_{\rm{inj}}}$ holds. 
\end{proof}
\begin{remark}
Although QWEP conjecture has not been settled, we remark that if there is at least one $M\in {\rm{vN}}(H)$ without QWEP, then the set $\text{vN}(H)^{\neg \text{QWEP}}$ of those $M\in \text{vN}(H)$ without QWEP, is an open and dense subset of $\text{vN}(H)$. Indeed, by Theorem \ref{thm: QWEP iff closure of injective factors}, $\text{vN}(H)^{\neg \text{QWEP}}$ is an open set. Suppose there is one $M\in \text{vN}(H)$ without QWEP, and let $N\in \text{vN}(H)$. Then by choosing $v_0\in \mathcal{U}(H\otimes H,H)$, \cite[Lemma 2.4]{HW2}, there is $(u_n)_n\in \mathcal{U}(H\otimes H)$ such that $v_0u_n^*(N\overline{\otimes}M)u_nv_0^*\to N$ in vN$(H)$. Since $v_0u_n^*(N\overline{\otimes}M)u_nv_0^*$ fails QWEP for each $n\in \mathbb{N}$, $N$ is in the closure of $\text{vN}(H)^{\neg \text{QWEP}}$. 
\end{remark}
\begin{theorem}\label{thm: QWEP via approximation of standard form}
Let $M$ be a von Neumann algebra with separable predual, and let $\mathcal{P}_M^{\natural}$ be the natural cone in the standard form of $M$. The following conditions are equivalent.
\begin{itemize}
\item[{\rm{(1)}}] $M$ has {\rm{QWEP}}.
\item[{\rm{(2)}}] For any $n\in \mathbb{N}$, $\xi_1,\cdots,\xi_n\in \mathcal{P}_M^{\natural}$, and $\varepsilon>0$, there exists $k\in \mathbb{N}$ and $a_1,\cdots,a_n\in M_k(\mathbb{C})_+$ such that 
\[|\nai{\xi_i}{\xi_j}-{\rm{tr}}_k(a_ia_j)|<\varepsilon\ \ \ \ \ \ (i,j=1,\cdots,n).\]
Here, ${\rm{tr}}_k$ is the normalized trace on $M_k(\mathbb{C})$.
\end{itemize}
\end{theorem}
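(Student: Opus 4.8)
The plan is to deduce the equivalence from Theorem \ref{thm: QWEP iff closure of injective factors} (equivalently, from Theorem \ref{UW4.1.4}), which already characterizes QWEP as membership in $\overline{\mathcal{F}_{\rm{inj}}}$, together with the finite type I approximation furnished by condition (iv) there. The key observation is that condition (2) is exactly the statement that the Gram matrix $(\nai{\xi_i}{\xi_j})_{i,j}$ of finitely many vectors in $\mathcal{P}_M^{\natural}$ can be approximated, in the entrywise sense, by Gram-type matrices $({\rm{tr}}_k(a_ia_j))_{i,j}$ coming from positive matrices. So the whole argument is about transporting vectors in the natural cone of $M$ to vectors in the natural cone of a matrix algebra and back.

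\medskip\noindent\emph{(1)$\Rightarrow$(2).} Assume $M$ has QWEP. By Theorem \ref{UW4.1.4}, there is a sequence $\{k_n\}\subset\mathbb{N}$, states $\varphi_n\in S_{\rm{nf}}(M_{k_n}(\mathbb{C}))$, an embedding $i\colon M\to (M_{k_n}(\mathbb{C}),\varphi_n)^{\omega}=:\widetilde{M}$ and a normal faithful conditional expectation $\varepsilon\colon\widetilde M\to i(M)$. Put $\tilde\varphi:=(\varphi_n)^{\omega}$ and $\varphi_0:=\tilde\varphi\circ\varepsilon\in S_{\rm{nf}}(\widetilde M)$; restricting, $\varphi:=\varphi_0|_{i(M)}$ is a normal faithful state on $M$ (via $i$). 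Realize $M$ in standard form on $H=L^2(M,\varphi)$ with natural cone $\mathcal{P}_M^{\natural}$; I would use the description $\mathcal{P}_M^{\natural}=\overline{\{\Delta_\varphi^{1/4}a\xi_\varphi\colon a\in M_+\}}$. Given $\xi_1,\dots,\xi_n\in\mathcal{P}_M^{\natural}$ and $\varepsilon>0$, first approximate each $\xi_\ell$ within $\delta$ (to be chosen) by $\Delta_\varphi^{1/4}b_\ell\xi_\varphi$ with $b_\ell\in M_+$, so it suffices to treat vectors of this special form; then pull $b_\ell$ back to $i(M)\subset\widetilde M$ and lift to a representing sequence $(b_\ell^{(m)})_m\in\mathcal{M}^{\omega}(M_{k_m}(\mathbb{C}),\varphi_m)$ with $b_\ell^{(m)}\in M_{k_m}(\mathbb{C})_+$ (positivity of the lift can be arranged since the Ocneanu ultraproduct of positive elements has positive representatives). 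The point is that $\nai{\xi_i}{\xi_j}$ equals $\varphi((b_j^{1/2}\,\Delta_\varphi^{1/2}(b_i)\,b_j^{1/2})\cdots)$ — more precisely, $\nai{\Delta_\varphi^{1/4}b_i\xi_\varphi}{\Delta_\varphi^{1/4}b_j\xi_\varphi}$ is a fixed $*$-algebraic/modular expression in $b_i,b_j$ and $\varphi$ that is continuous along the representing sequences, because the $\sigma_t^{\varphi_m}$ and the $\|\cdot\|_{\varphi_m}$-topology converge to those of $\tilde\varphi$ (this is the Ocneanu-ultraproduct machinery of \cite{AndoHaagerup}). Hence for $m$ in a set in $\omega$, $\nai{\xi_i}{\xi_j}$ is within $\varepsilon$ of the corresponding quantity computed in $(M_{k_m}(\mathbb{C}),\varphi_m)$. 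Finally, inside a single matrix algebra $M_{k}(\mathbb{C})$ with a faithful state $\varphi_m$ given by a density matrix $\rho$, the vector $\Delta_{\varphi_m}^{1/4}b\xi_{\varphi_m}$ in $L^2(M_k,\varphi_m)$ has inner products expressible, after the standard identification $L^2(M_k,\varphi_m)\cong M_k(\mathbb{C})$ with $\xi_{\varphi_m}=\rho^{1/2}$ and $\Delta_{\varphi_m}^{1/4}x=\rho^{1/4}x\rho^{-1/4}$, as ${\rm{Tr}}(\rho^{1/4}b_i\rho^{1/2}b_j\rho^{1/4})={\rm{Tr}}((\rho^{1/4}b_i\rho^{1/4})(\rho^{1/4}b_j\rho^{1/4}))$; setting $a_\ell:={k}\,\rho^{1/4}b_\ell^{(m)}\rho^{1/4}\in M_{k}(\mathbb{C})_+$ (the $k$ absorbing the normalization) gives ${\rm{tr}}_{k}(a_ia_j)=$ that trace, which is $\varepsilon$-close to $\nai{\xi_i}{\xi_j}$. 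Choosing $\delta$ small enough that the cross-terms are controlled by Cauchy--Schwarz finishes this direction.

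\medskip\noindent\emph{(2)$\Rightarrow$(1).} Conversely, assume (2). The strategy is to manufacture, from the approximations, a sequence of finite type I factors converging to (a standard copy of) $M$ in the Effros--Mar\'echal topology, and then invoke Theorem \ref{thm: QWEP iff closure of injective factors}. Fix $\varphi\in S_{\rm{nf}}(M)$ with $M$ standardly represented on $H=L^2(M,\varphi)$, and fix a countable set $\{\eta_1,\eta_2,\dots\}\subset\mathcal{P}_M^{\natural}$ that is total (possible since $M_*$ is separable, using that $H$ is spanned by $\mathcal{P}_M^{\natural}$ and that the states $\omega_{\eta}$ separate points). For each $n$, apply (2) to $\eta_1,\dots,\eta_n$ with $\varepsilon=1/n$ to get $k_n$ and $a_1^{(n)},\dots,a_n^{(n)}\in M_{k_n}(\mathbb{C})_+$ with $|\nai{\eta_i}{\eta_j}-{\rm{tr}}_{k_n}(a_i^{(n)}a_j^{(n)})|<1/n$. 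Now form the Ocneanu ultraproduct $\widetilde M:=(M_{k_n}(\mathbb{C}),{\rm{tr}}_{k_n})^{\omega}$; since $M_{k_n}(\mathbb{C})$ are matrix algebras with tracial states, this is (a copy of) $R^{\omega}$. The vectors $\vec a_\ell:=(a_\ell^{(n)})_{n\ge\ell}$, suitably normalized, define elements of the GNS Hilbert space $L^2(\widetilde M,{\rm{tr}}^{\omega})$ — one must check $\ell^\infty$-boundedness of the representing sequences, which follows because $\nai{\eta_\ell}{\eta_\ell}={\rm{tr}}_{k_n}((a_\ell^{(n)})^2)+o(1)$ bounds the $\|\cdot\|_2$-norms, and a diagonal/truncation argument handles operator-norm boundedness (replace $a_\ell^{(n)}$ by $a_\ell^{(n)}\wedge R$ for a large constant $R$, absorbing the small error). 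The map sending $\eta_\ell\mapsto$ (the natural-cone representative of) $\vec a_\ell$ then extends to an isometry of the closed real cone generated by $\{\eta_\ell\}$ — hence, after complexifying and using self-duality and the Powers--St\o rmer inequality, to an identification of $M$-data with a subcone of $\mathcal{P}_{\widetilde M}^{\natural}$; by the GNS/standard-form functoriality this yields an embedding $i\colon M\hookrightarrow\widetilde M=R^{\omega}$ with $\varphi={\rm{tr}}^{\omega}\circ i$, and because the inclusion is compatible with the traces, there is a ${\rm{tr}}^{\omega}$-preserving normal faithful conditional expectation $\varepsilon\colon R^{\omega}\to i(M)$. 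Finally, $i\colon M\to R^{\omega}\subset R_{\infty}^{\omega}$ (tensoring as in the proof of Theorem \ref{UW4.1.4}) together with $\varepsilon$ shows, via Theorem \ref{UW4.1.4}, that $M\in\overline{\mathcal{F}_{\rm{inj}}}$, hence by Theorem \ref{thm: QWEP iff closure of injective factors}, $M$ has QWEP.

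\medskip\noindent\emph{Main obstacle.} The delicate point is the direction (2)$\Rightarrow$(1): turning the purely \emph{numerical} Gram-matrix approximations into an actual $*$-homomorphism $i\colon M\to R^{\omega}$ with a conditional expectation onto the image. Inner products in the natural cone determine the vectors, but to recover the algebra one must show that the matrix elements $a_\ell^{(n)}$ can be chosen not merely to approximate pairwise inner products but to be \emph{asymptotically multiplicative} in the relevant $L^2$-sense — i.e.\ that products $a_i^{(n)}a_j^{(n)}$ in $M_{k_n}(\mathbb{C})$, or rather the corresponding $JxJ$-actions, line up with products of the $\eta$'s under the order/modular structure. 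This is exactly where the self-duality of $\mathcal{P}_M^{\natural}$, the fact that $\mathcal{P}_M^{\natural}$ linearly spans $H$, and the Araki--Powers--St\o rmer inequality must be leveraged: they let one upgrade approximation of the quadratic form to approximation of the full standard-form structure, and they control the passage to a limit along $\omega$. I would expect the bulk of the technical work (and the careful use of \cite{Haagerup1,Araki,Connes74} on the geometry of the natural cone) to be concentrated here; the rest is bookkeeping with the ultraproduct identifications from \cite{AndoHaagerup} and the Effros--Mar\'echal input from \cite{HW1,HW2}.
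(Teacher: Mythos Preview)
Your direction (1)$\Rightarrow$(2) is essentially the paper's argument, though executed more computationally. The paper avoids your hand-wavy ``continuous along the representing sequences'' step by working structurally: by \cite[Theorem 3.18]{AndoHaagerup} the natural cone of the Groh--Raynaud ultraproduct $\prod^{\omega}M_{k_n}(\mathbb{C})$ is literally the ultraproduct $(\mathcal{P}_n^{\natural})_{\omega}$ of the matrix natural cones, and in each $M_{k_n}(\mathbb{C})$ the natural cone (for the trace) \emph{is} $M_{k_n}(\mathbb{C})_+$ with $\nai{a}{b}={\rm tr}_{k_n}(ab)$; so condition (2) holds trivially for $\mathcal{P}_{\omega}^{\natural}$, hence for the sub-cone $q\mathcal{P}_{\omega}^{\natural}$ (standard cone of the Ocneanu ultraproduct), hence for the further sub-cone $\mathcal{P}_{i(M)}^{\natural}\cong\mathcal{P}_M^{\natural}$ coming from the conditional expectation. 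No explicit density-matrix bookkeeping is needed (and your normalization constant should be $\sqrt{k}$, not $k$).

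Your direction (2)$\Rightarrow$(1), however, has a genuine gap, and you correctly locate it: you try to upgrade the Gram-matrix approximations to an honest $*$-homomorphism $M\to R^{\omega}$, and you do not explain how ``asymptotic multiplicativity'' could ever follow from approximation of the positive-cone inner products alone. The paper sidesteps this entirely. The missing idea is Lemma \ref{lem: keylemma}: if $(M,H_M,J_M,\mathcal{P}_M^{\natural})$ is a standard form with $M$ QWEP and $\rho\colon H_N\to H_M$ is a linear isometry with $\rho(\mathcal{P}_N^{\natural})\subset\mathcal{P}_M^{\natural}$, then $N$ is QWEP. No multiplicativity is ever needed: from $\rho$ one builds, via Araki's order isomorphism $a\mapsto\Delta^{1/4}a\xi$ between $\{a:-\alpha\le a\le\alpha\}$ and $\{\xi:-\alpha\xi_{\varphi}\le\xi\le\alpha\xi_{\varphi}\}$, merely \emph{unital positive} maps $i\colon N\to M$ and $\varepsilon\colon M\to N$ with $\varepsilon\circ i={\rm id}_N$; then Kirchberg's criterion (\cite[Corollary 5.3]{Ozawa}: a contraction $\varepsilon$ from a QWEP algebra whose image of the ball is weak$^*$-dense in the ball forces QWEP on the target) finishes. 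With this lemma in hand, (2)$\Rightarrow$(1) is short: take a dense sequence $\{\xi_n\}\subset\mathcal{P}_M^{\natural}$, use (2) to produce $a_i^{(n)}\in M_{k_n}(\mathbb{C})_+$, set $\tilde\xi_i:=(a_i^{(n)}\xi_{{\rm tr}_{k_n}})_{\omega}\in\mathcal{P}_{\omega}^{\natural}$ in the \emph{Groh--Raynaud} ultraproduct Hilbert space $H_{\omega}$, and observe $\nai{\tilde\xi_i}{\tilde\xi_j}=\nai{\xi_i}{\xi_j}$; this yields the required cone-preserving isometry $\rho\colon L^2(M)\to H_{\omega}$, and since $\prod^{\omega}M_{k_n}(\mathbb{C})$ is QWEP, Lemma \ref{lem: keylemma} gives QWEP for $M$. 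Note that working in the Groh--Raynaud picture means only the Hilbert-space norms ${\rm tr}_{k_n}((a_\ell^{(n)})^2)$ need to be bounded---which they are, by hypothesis---so your operator-norm truncation worry disappears.
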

Note that if $M$ is a II$_1$ factor, then $\mathcal{P}_M^{\natural}=\overline{M_+\xi_{\tau}}$. Therefore in this case, the above theorem is analogous to the following Kirchberg's result \cite[Proposition 4.6]{Kirchberg} when unitaries in his results are replaced by positive elements. 
\begin{theorem}[Kirchberg]\label{thm: Kirchberg approximation}
Let $M$ be a finite von Neumann algebra with separable predual with a normal faithful tracial state $\tau_M$. Then the following conditions are equivalent:
\begin{list}{}{}
\item[{\rm{(1)}}]
There exists an embedding $i\colon M\to R^{\omega}$ with $\tau^{\omega}(i(a))=\tau_M(a)\ (a\in M)$.
\item[{\rm{(2)}}] For every $\varepsilon>0, n\in \mathbb{N}$ and $u_1,\cdots,u_n\in \mathcal{U}(M)$, there is $k\in \mathbb{N}$ and $v_1,\cdots,v_n\in \mathcal{U}(M_k(\mathbb{C}))$ such that 
\[|\tau_M(u_i^*u_j)-{\rm{tr}}_k(v_i^*v_j)|<\varepsilon,\ \ \ |\tau_M(u_i)-{\rm{tr}}_k(v_i)|<\varepsilon\]
for $i,j=1,\cdots,n$. 
\end{list}  
\end{theorem}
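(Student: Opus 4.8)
The plan is to prove the two implications separately, using throughout the standard facts that the hyperfinite $R$ is the $\|\cdot\|_{\tau}$-closure of an increasing sequence of matrix subalgebras and that $R^{\omega}$ is the tracial ultrapower $\prod^{\omega}(R,\tau)$.

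\textbf{(1)$\Rightarrow$(2).} This is the soft direction. Given a trace-preserving embedding $i\colon M\to R^{\omega}$ and $u_1,\dots,u_n\in\mathcal{U}(M)$, I would first lift each unitary $i(u_j)$ to a sequence of \emph{unitaries} $u_j^{(m)}\in\mathcal{U}(R)$ with $(u_j^{(m)})^{\omega}=i(u_j)$ (unitaries lift through the tracial ultrapower, e.g.\ by Borel functional calculus applied to a self-adjoint lift of a logarithm). Since $R$ is hyperfinite, for each fixed $m$ I can approximate $u_j^{(m)}$ in $\|\cdot\|_{\tau}$ by an element of a matrix subalgebra $M_k(\mathbb{C})\subset R$ and, after a negligible perturbation ensuring invertibility, replace it by the unitary part $v_j$ of its polar decomposition inside that matrix algebra; the elementary estimate $\|u-v\|_{\tau}\le C\|u-x\|_{\tau}$ for the polar part controls the error. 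Because $\tau_R(u_i^{(m)*}u_j^{(m)})\xrightarrow{m\to\omega}\tau^{\omega}(i(u_i)^*i(u_j))=\tau_M(u_i^*u_j)$ and likewise for the single moments, choosing $m$ in the appropriate $\omega$-large set and the matrix approximation fine enough makes $|\tau_M(u_i^*u_j)-\mathrm{tr}_k(v_i^*v_j)|$ and $|\tau_M(u_i)-\mathrm{tr}_k(v_i)|$ simultaneously smaller than $\varepsilon$.

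\textbf{(2)$\Rightarrow$(1).} This is the substantial direction. Since $M$ has separable predual I would fix a countable subgroup $G\subset\mathcal{U}(M)$ generating $M$ as a von Neumann algebra and enumerate its finite subsets with decreasing tolerances. Applying (2) to each finite $F\subset G$ with $\varepsilon=1/m$ produces matrix unitaries $\{v_u^{F,m}\}_{u\in F}$ in $\mathcal{U}(M_{k(F,m)}(\mathbb{C}))$, and since every matrix algebra embeds trace-preservingly into $R$, the tracial ultraproduct $\prod^{\omega}(M_{k(F,m)}(\mathbb{C}),\mathrm{tr})$ sits trace-preservingly inside $R^{\omega}$. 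Indexing the pairs $(F,m)$ cofinally by $\mathbb{N}$ I obtain a map $\rho\colon G\to\mathcal{U}(R^{\omega})$, $\rho(u)=(v_u^{F_n,m_n})^{\omega}$, with $\tau^{\omega}(\rho(u))=\tau_M(u)$ and $\tau^{\omega}(\rho(u)^*\rho(u'))=\tau_M(u^*u')$ for all $u,u'\in G$. The matching of these moments shows that the linear extension $\sum c_u u\mapsto\sum c_u\rho(u)$ is a well-defined $\|\cdot\|_{\tau}$-isometry of $L^2(M,\tau)$ into $L^2(R^{\omega})$ fixing the cyclic vector.

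\textbf{The main obstacle.} What remains — and what I expect to be the heart of the matter — is to promote $\rho$ to a genuine \emph{trace-preserving $*$-homomorphism}, i.e.\ to establish $\rho(uu')=\rho(u)\rho(u')$. This does \emph{not} follow from the pairwise data alone: controlling $\tau_M(u^*u')$ and $\tau_M(u)$ pins down the Gram matrix of $\{u\widehat{1}\}$ but says nothing about the triple products $\mathrm{tr}(v_{uu'}^*v_uv_{u'})$ that govern the multiplicativity defect, and two matrix unitaries with equal trace need not be close in $\|\cdot\|_{\tau}$. I would therefore not attempt to force multiplicativity by elementary microstate manipulation; instead I would reduce the statement to the approximability of the tracial state $\tau_M$ (pulled back to $C^*(\mathbb{F}_{\infty})$ along a surjection $g_j\mapsto u_j$ onto a generating family) by \emph{finite-dimensional} tracial states, and invoke Kirchberg's tensor-product characterization of WEP — the identity $C^*(\mathbb{F}_{\infty})\otimes_{\min}C^*(\mathbb{F}_{\infty})=C^*(\mathbb{F}_{\infty})\otimes_{\max}C^*(\mathbb{F}_{\infty})$ and the resulting description of the relevant trace simplex — to bridge the gap between the pairwise approximability furnished by (2) and honest matricial (hence multiplicative) microstates. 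This is precisely the point where the depth of Connes' embedding problem enters, and carrying it out is the real content of the proof.
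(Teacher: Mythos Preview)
The paper does not supply a proof of this theorem: it is quoted verbatim as Kirchberg's result \cite[Proposition 4.6]{Kirchberg}, purely to set Theorem~\ref{thm: QWEP via approximation of standard form} in context, so there is no ``paper's own proof'' to compare your attempt against.

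On your sketch itself: the direction (1)$\Rightarrow$(2) is fine. For (2)$\Rightarrow$(1) you correctly isolate the crux --- condition (2) controls only the first and second moments $\tau_M(u_i)$ and $\tau_M(u_i^*u_j)$, so the ultraproduct map $\rho$ you assemble has no reason to be multiplicative --- but the remedy you propose is circular. The identity
\[
C^*(\mathbb{F}_\infty)\otimes_{\min}C^*(\mathbb{F}_\infty)=C^*(\mathbb{F}_\infty)\otimes_{\max}C^*(\mathbb{F}_\infty)
\]
is not ``Kirchberg's tensor-product characterization of WEP'' that one may simply invoke; it is precisely the QWEP/Connes conjecture (condition (1) in the introduction of the present paper), so appealing to it would assume vastly more than what you are trying to establish. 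What Kirchberg actually uses is his \emph{unconditional} theorem $C^*(\mathbb{F}_\infty)\otimes_{\min}\mathbb{B}(H)=C^*(\mathbb{F}_\infty)\otimes_{\max}\mathbb{B}(H)$: through it, the pairwise data in (2) --- which encodes $\min$-continuity of the associated bilinear form on $C^*(\mathbb{F}_\infty)$ --- can be promoted to $\max$-continuity, and from there one extracts finite-dimensional approximants that match \emph{all} moments, yielding a genuine trace-preserving $*$-homomorphism into $R^\omega$. Until you replace the conjectural identity by this theorem, your argument for (2)$\Rightarrow$(1) has a real gap.
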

The next lemma plays the key role in the proof of Theorem \ref{thm: QWEP via approximation of standard form}
\begin{lemma}\label{lem: keylemma}
Let $(M,H_M,J_M,\mathcal{P}_M^{\natural}), (N,H_N,J_N,\mathcal{P}_N^{\natural})$ be standard forms. Assume that $M$ has {\rm{QWEP}}, $N$ is $\sigma$-finite and there is an linear isometry $\rho\colon H_N\to H_M$ satisfying $\rho(\mathcal{P}_N^{\natural})\subset \mathcal{P}_M^{\natural}$. Then $N$ has {\rm{QWEP}}.
\end{lemma}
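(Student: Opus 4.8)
The plan is to produce a surjective $*$-homomorphism onto $N$ from a C$^*$-algebra with WEP, by transporting the QWEP structure of $M$ through the isometry $\rho$. The first step is to understand the image $\rho(\mathcal{P}_N^{\natural})$ inside $\mathcal{P}_M^{\natural}$ at the level of states: since every positive $\psi\in N_*$ is $\omega_{\xi_\psi}$ for a unique $\xi_\psi\in\mathcal{P}_N^{\natural}$, and $\rho$ carries $\mathcal{P}_N^{\natural}$ into $\mathcal{P}_M^{\natural}$ isometrically while preserving inner products, we get a map on the positive parts of the preduals that is compatible with the natural cone geometry; the Araki--Powers--St{\o}rmer inequality recalled above lets us control norms on both sides. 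I would fix a normal faithful state $\varphi$ on $N$ (using $\sigma$-finiteness), let $\xi_\varphi\in\mathcal{P}_N^{\natural}$ be its vector representative, and set $\eta:=\rho(\xi_\varphi)\in\mathcal{P}_M^{\natural}$, with $\psi:=\omega_\eta$ the corresponding normal state on $M$ (possibly only on the reduced algebra $M_{s(\psi)}=s(\psi)Ms(\psi)$, where $s(\psi)$ is the support projection). A corner of a QWEP algebra is QWEP (this is in \cite[Corollary 3.3]{Kirchberg}), so we may as well assume $\psi$ is faithful on $M$.

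**The core construction.** The natural candidate for the subalgebra of $M$ that models $N$ is $M_\rho:=\{x\in M : x\,\rho(H_N)\subseteq \rho(H_N)\text{ and }J_MxJ_M\,\rho(H_N)\subseteq \rho(H_N)\}$, or more precisely one works with the von Neumann algebra $Q$ generated inside $p M p$ (where $p=\rho\rho^*$ is the projection onto $\rho(H_N)$) by the compressions $p x p$ of elements respecting the cone. The key is to show $Q$ is $*$-isomorphic to $N$ (or has $N$ as a corner/quotient) via a map implemented by $\rho$, using that $\rho$ preserves the cone and hence intertwines the $J$'s and the modular structure on the relevant vectors: concretely $J_M\rho = \rho J_N$ on $\rho(H_N)$ follows from $J_M|_{\mathcal{P}_M^{\natural}}=\mathrm{id}$, $J_N|_{\mathcal{P}_N^{\natural}}=\mathrm{id}$, antilinearity, and the fact that $\mathcal{P}_N^{\natural}$ spans $H_N$. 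Then one uses the uniqueness of the standard form: a cone-preserving isometry between standard forms induces a spatial isomorphism of the von Neumann algebras (this is essentially \cite[Theorem 2.3]{Haagerup1}, or can be extracted from \cite{Araki, Connes74}). From that isomorphism, $N$ is identified with a von Neumann subalgebra of $M$ that is the range of a normal faithful conditional expectation (coming from $p\cdot p$ composed with the slice by $\psi$, using standard Takesaki-type criteria since $\psi\circ$(the embedding) is $\sigma_t^\psi$-invariant on the image). A subalgebra of a QWEP algebra that is the image of a normal faithful conditional expectation is again QWEP — indeed it is a quotient of the QWEP algebra $M$ via the expectation composed with nothing, or more safely: the expectation makes $N$'s standard form sit inside $M$'s with a compatible trace-like state, and one invokes \cite[Corollary 3.3]{Kirchberg} or the corner argument as in Lemma \ref{lem: Ocneanu UP has QWEP}.

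**The main obstacle.** The delicate point is that $\rho$ is only assumed to be an isometry of Hilbert spaces respecting the cones, not a priori a spatial implementation of any algebra morphism; so the hard part is upgrading ``cone-preserving isometry'' to ``there is an actual embedding $N\hookrightarrow M$ (or onto a corner) with a normal faithful conditional expectation.'' I expect to handle this by the following chain: (i) show $p=\rho\rho^*$ has the property that $\rho^*(\cdot)\rho$ maps $\{x\in M: xp=px\}$ onto a von Neumann algebra $Q_0\subseteq \mathbb B(H_N)$; (ii) show $Q_0$ contains $N$ — by checking that $\rho$ conjugates $\mathcal{P}_M^{\natural}$-positivity to $\mathcal{P}_N^{\natural}$-positivity and hence, via the characterization $\mathcal{P}_N^{\natural}=\overline{\{aJ_NaJ_N\xi_\varphi\}}$, that every $a\in N_+$ arises; (iii) deduce $N\subseteq Q_0\subseteq$ (corner of $M$), with the inclusion complemented by a conditional expectation because the state $\psi$ restricts correctly. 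Once $N$ sits as the image of a normal faithful conditional expectation in a QWEP algebra, QWEP of $N$ is immediate from \cite[Corollary 3.3 (i) and (v)]{Kirchberg} (pass to the corner, then note the expectation exhibits $N$ as QWEP). The verification in step (ii) — that the full copy of $N$, not just a weakly dense piece, is captured — is where I would be most careful, and where the self-duality of the cones and the Araki--Powers--St{\o}rmer bound do the real work of promoting a dense approximation to an honest inclusion of von Neumann algebras.
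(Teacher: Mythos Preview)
Your proposal has a genuine gap at exactly the point you flag as ``the main obstacle.'' You want to upgrade the cone-preserving isometry $\rho$ to an honest von Neumann algebra embedding $N\hookrightarrow (\text{corner of }M)$ complemented by a normal faithful conditional expectation, and you appeal to the uniqueness of the standard form (\cite[Theorem 2.3]{Haagerup1}) and the Connes--Araki characterization of isomorphisms via cone-preserving maps. But those results require a \emph{unitary} between the standard Hilbert spaces; for a mere isometry there is no reason to expect the projection $p=\rho\rho^*$ to lie in $M$, in $M'$, or in any algebra related to $M$, so the commutation properties needed to produce a subalgebra isomorphic to $N$ (your steps (i)--(iii)) simply need not hold. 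In particular, $x\mapsto \rho^* x\rho$ is only completely positive, not multiplicative, and there is no mechanism in your outline that recovers multiplicativity.

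The paper does not attempt to find such an embedding. After the same preliminary reductions you describe (showing $\rho J_N=J_M\rho$, reducing to the case where $\omega_{\rho(\xi_\psi)}$ is faithful on $M$ by cutting down by $q=pJ_MpJ_M$), it instead uses Araki's order isomorphism (Lemma \ref{lem: order isomorphism of positive cone}): the map $a\mapsto \Delta_\varphi^{1/4}a\xi_\varphi$ is an order isomorphism from $\{a\in M_{\rm sa}:-\alpha 1\le a\le \alpha 1\}$ onto $\{\xi\in H_{M,{\rm sa}}:-\alpha\xi_\varphi\le \xi\le \alpha\xi_\varphi\}$. Since $\rho$ and $\rho^*$ preserve the cone order and send $\xi_\psi\leftrightarrow\xi_\varphi$, conjugating by these order isomorphisms produces unital \emph{positive} (not multiplicative) linear maps $i\colon N\to M$ and $\varepsilon\colon M\to N$ with $\varepsilon\circ i={\rm id}_N$. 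Hence $\varepsilon$ is contractive and maps $\overline{Ball}(M)$ onto $\overline{Ball}(N)$, and Lemma \ref{lem: Kirchberg reduction} (the Kirchberg--Ozawa criterion) transfers QWEP from $M$ to $N$. The key point you are missing is that one never needs an algebra map: a contractive linear map hitting the whole unit ball is enough.
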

To prove the lemma, we use following classical results on the natural cone. 
\begin{lemma}[Araki]\label{lem: order isomorphism of positive cone} Let $(M,H,J,\mathcal{P}_M^{\natural})$ be a standard form with $M$ $\sigma$-finite. Let $\varphi\in S_{\rm{nf}}(M)$ with $\varphi=\omega_{\xi_{\varphi}}$, $\xi_{\varphi}\in \mathcal{P}_M^{\natural}$. Then $\Phi\colon M_{\rm{sa}}\ni a\mapsto \Delta_{\varphi}^{\frac{1}{4}}a\xi_{\varphi}\in H_{{\rm{sa}}}$ induces an order isomorphism between $\{a\in M_{\rm{sa}};\ -\alpha 1\le a\le \alpha 1\}$ and $\{\xi\in H_{\rm{sa}}; -\alpha \xi_{\varphi}\le \xi\le \alpha \xi_{\varphi}\}$ for each $\alpha>0$.  
\end{lemma}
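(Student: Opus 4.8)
I would prove this by promoting $\Phi$ to a global order embedding of $M_{\rm sa}$ and then controlling its range on the order interval separately. A few preliminaries first: for $a=a^{*}\in M$ we have $a\xi_{\varphi}\in\dom{S}=\dom{\Delta_{\varphi}^{1/2}}\subset\dom{\Delta_{\varphi}^{1/4}}$, so $\Phi(a)$ is defined; from $S=J\Delta_{\varphi}^{1/2}$ and $Sa\xi_{\varphi}=a^{*}\xi_{\varphi}=a\xi_{\varphi}$ one gets $\Delta_{\varphi}^{1/2}a\xi_{\varphi}=Ja\xi_{\varphi}=(JaJ)\xi_{\varphi}\in H_{\rm sa}$, hence $\Phi(a)=\Delta_{\varphi}^{1/4}a\xi_{\varphi}\in H_{\rm sa}$; moreover $\Phi$ is real-linear, $\Phi(1)=\Delta_{\varphi}^{1/4}\xi_{\varphi}=\xi_{\varphi}$ (using $\Delta_{\varphi}\xi_{\varphi}=\xi_{\varphi}$), and $\Phi$ is injective because $\Delta_{\varphi}^{1/4}$ is injective and $\xi_{\varphi}$ is separating. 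By homogeneity I may take $\alpha=1$. The statement then reduces to two claims: (A) for $a=a^{*}\in M$ one has $\Phi(a)\in\mathcal{P}_M^{\natural}$ if and only if $a\ge0$; and (B) every $\xi\in H_{\rm sa}$ with $-\xi_{\varphi}\le\xi\le\xi_{\varphi}$ is of the form $\Phi(a)$ for some $a\in M_{\rm sa}$. Granting (A), real-linearity gives $a\le b\Leftrightarrow\Phi(b-a)\in\mathcal{P}_M^{\natural}\Leftrightarrow\Phi(a)\le\Phi(b)$, so $\Phi$ is an order isomorphism of $M_{\rm sa}$ onto its image; together with $\Phi(\pm1)=\pm\xi_{\varphi}$ this shows $\Phi$ carries $\{-1\le a\le1\}$ order-isomorphically into $\{-\xi_{\varphi}\le\xi\le\xi_{\varphi}\}$, and (B) upgrades ``into'' to ``onto''.

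For (A), the implication $a\ge0\Rightarrow\Phi(a)\in\mathcal{P}_M^{\natural}$ is immediate from $\mathcal{P}_M^{\natural}=\overline{\{\Delta_{\varphi}^{1/4}b\xi_{\varphi}:b\in M_{+}\}}$. For the converse I would use self-duality: since $\{\Delta_{\varphi}^{1/4}b\xi_{\varphi}:b\in M_{+}\}$ is dense in $\mathcal{P}_M^{\natural}=(\mathcal{P}_M^{\natural})^{0}$, the hypothesis $\Phi(a)\in\mathcal{P}_M^{\natural}$ is equivalent to $\nai{\Delta_{\varphi}^{1/4}a\xi_{\varphi}}{\Delta_{\varphi}^{1/4}b\xi_{\varphi}}\ge0$ for all $b\in M_{+}$. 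Since $a\xi_{\varphi},b\xi_{\varphi}\in\dom{\Delta_{\varphi}^{1/2}}$ and $\Delta_{\varphi}^{1/4}$ is self-adjoint, this inner product equals $\nai{\Delta_{\varphi}^{1/2}a\xi_{\varphi}}{b\xi_{\varphi}}=\nai{(JaJ)b\xi_{\varphi}}{\xi_{\varphi}}$ by the identity above, with $JaJ\in M'$. Taking $b=c^{*}c$ with $c\in M$ arbitrary and using that $JaJ$ commutes with $c$, the condition becomes $\nai{(JaJ)c\xi_{\varphi}}{c\xi_{\varphi}}\ge0$ for all $c\in M$; since $M\xi_{\varphi}$ is dense this forces $JaJ\ge0$, i.e. $a\ge0$.

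Claim (B) is the main point. Given $\xi$ with $-\xi_{\varphi}\le\xi\le\xi_{\varphi}$, the vector $\eta:=\tfrac{1}{2}(\xi+\xi_{\varphi})$ lies in $\mathcal{P}_M^{\natural}$ and satisfies $\eta\le\xi_{\varphi}$, and $\xi=\Phi(2h-1)$ once $\eta=\Phi(h)$; so it suffices to produce, for every $\eta\in\mathcal{P}_M^{\natural}$ with $\eta\le\xi_{\varphi}$, an $h\in M$ with $\Phi(h)=\eta$. I would build $h$ from an auxiliary sesquilinear form: on the dense domain $M\xi_{\varphi}$ put $B(x\xi_{\varphi},y\xi_{\varphi}):=\nai{\eta}{\Delta_{\varphi}^{1/4}x^{*}y\xi_{\varphi}}$ (well defined since $\xi_{\varphi}$ is separating). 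Its diagonal $B(x\xi_{\varphi},x\xi_{\varphi})=\nai{\eta}{\Phi(x^{*}x)}$ is $\ge0$ by self-duality (both $\eta$ and $\Phi(x^{*}x)$ are in $\mathcal{P}_M^{\natural}$), and, using $\xi_{\varphi}-\eta\in\mathcal{P}_M^{\natural}$, it is $\le\nai{\xi_{\varphi}}{\Phi(x^{*}x)}=\varphi(x^{*}x)=\|x\xi_{\varphi}\|^{2}$. Hence $B$ is a bounded positive form, implemented by a unique $T\in\mathbb{B}(H)$ with $0\le T\le1$; and $\nai{T(zx)\xi_{\varphi}}{y\xi_{\varphi}}=\nai{\eta}{\Delta_{\varphi}^{1/4}x^{*}z^{*}y\xi_{\varphi}}=\nai{Tx\xi_{\varphi}}{z^{*}y\xi_{\varphi}}=\nai{zTx\xi_{\varphi}}{y\xi_{\varphi}}$ for all $x,y,z\in M$ shows $T\in M'$, so $h:=JTJ\in M$ with $0\le h\le1$. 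Finally, since $JTJ$ is self-adjoint, $S(JTJ)\xi_{\varphi}=(JTJ)\xi_{\varphi}$, hence $\Delta_{\varphi}^{1/2}h\xi_{\varphi}=Jh\xi_{\varphi}=T\xi_{\varphi}$, so for every $m\in M$, $\nai{\Phi(h)}{\Delta_{\varphi}^{1/4}m\xi_{\varphi}}=\nai{\Delta_{\varphi}^{1/2}h\xi_{\varphi}}{m\xi_{\varphi}}=\nai{T\xi_{\varphi}}{m\xi_{\varphi}}=\nai{\eta}{\Delta_{\varphi}^{1/4}m\xi_{\varphi}}$, the last step being the defining relation of $T$ with $x=1$; as $\{\Delta_{\varphi}^{1/4}m\xi_{\varphi}:m\in M\}$ is dense, $\Phi(h)=\eta$.

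Assembling (A), (B) and the reductions proves the lemma. The step I expect to be the genuine obstacle is (B): this is where the order relation $\eta\le\xi_{\varphi}$ is indispensable — a general vector of $\mathcal{P}_M^{\natural}$ need not be of the form $\Delta_{\varphi}^{1/4}h\xi_{\varphi}$ with $h$ bounded — and the crux is the observation that $0\le\eta\le\xi_{\varphi}$ is exactly what squeezes the naturally associated form $B$ between $0$ and the identity, forcing it to come from a bounded positive element. This is essentially Araki's analysis of the natural cone; cf. \cite{Araki} and \cite{Connes74} for the order structure.
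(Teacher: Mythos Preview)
Your proof is correct and follows the same two-step skeleton as the paper: first the equivalence $a\in M_+\Leftrightarrow \Delta_\varphi^{1/4}a\xi_\varphi\in\mathcal{P}_M^{\natural}$ (your Claim~(A)), then surjectivity of $\Phi$ onto the order interval $[0,\alpha\xi_\varphi]$ (your Claim~(B)). The only difference is that the paper disposes of (B) in one line by citing \cite[Theorem~3.8~(8)]{Araki}, whereas you supply a self-contained argument via the bounded positive sesquilinear form $B(x\xi_\varphi,y\xi_\varphi)=\langle\eta,\Delta_\varphi^{1/4}x^*y\xi_\varphi\rangle$ and Riesz representation in $M'$; this is essentially how Araki's result is proved, so the two routes coincide once the citation is unpacked.
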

\begin{proof}
For $a\in M$, it is easy to see that
\[a\in M_+\Leftrightarrow \Delta_{\varphi}^{\frac{1}{4}}a\xi_{\varphi}\in \mathcal{P}_M^{\natural}.\]
This implies that $\Phi$ is an order isomorphism from $M_{\rm{sa}}$ onto $\Phi(M_{\rm{sa}})$. By \cite[Theorem 3.8 (8)]{Araki}, $\Phi$ maps $\{x\in M;\ 0\le x\le \alpha 1\}$ onto $\{\eta\in \mathcal{P}_M^{\natural};\ 0\le \eta\le \alpha \xi_{\varphi}\}$. From this the claim easily follows. 
\end{proof}

\begin{lemma}\label{lem: appendix about natural cone}
Let $(M,H,J,\mathcal{P}_M^{\natural})$ be a standard form. For each $\xi\in \mathcal{P}_M^{\natural}$, denote by $e(\xi)$ the support projection of $\omega_{\xi}$. Let $q:=e(\xi)Je(\xi)J$. Then the following holds.
\begin{list}{}{}
\item[{\rm{(1) (Araki)}}] $\xi\perp \eta\Leftrightarrow e(\xi)\perp e(\eta)$. 
\item[{\rm{(2)}}] For $\xi,\eta\in \mathcal{P}_M^{\natural}$, $e(\xi)\le e(\eta)$ if and only if $\eta\perp \zeta\Rightarrow \xi\perp \zeta$ holds for every $\zeta\in \mathcal{P}_M^{\natural}$.
\item[{\rm{(3)}}] $q(\mathcal{P}_M^{\natural})=(\{\xi\}^{\perp}\cap \mathcal{P}_M^{\natural})^{\perp}\cap \mathcal{P}_M^{\natural}$.
\end{list}
\end{lemma}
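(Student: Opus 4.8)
The plan is to treat (1) as known --- it is Araki's theorem, \cite{Araki} (see also \cite{Connes74}) --- and to deduce both (2) and (3) from it, using only elementary facts about support projections of vector functionals. Throughout I will use that for $v\in H$ the vector functional $\omega_v|_M$ is normal and positive on $M$, that its support projection equals $[M'v]$, the orthogonal projection of $H$ onto $\overline{M'v}$, and that consequently $[M'v]\le g$ whenever $g$ is a projection in $M$ with $v\in gH$; and that every $\eta\in\mathcal{P}_M^{\natural}$ satisfies $\eta=e(\eta)\eta=Je(\eta)J\eta$ (the first since $\omega_\eta(1-e(\eta))=0$, the second by applying $J$ and using $J\eta=\eta$).

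For (2): the implication $e(\xi)\le e(\eta)\Rightarrow(\eta\perp\zeta\Rightarrow\xi\perp\zeta$ for all $\zeta)$ is immediate, since $\eta\perp\zeta$ gives $e(\eta)\perp e(\zeta)$ by (1), whence $e(\xi)\le e(\eta)\le 1-e(\zeta)$ and $\xi\perp\zeta$ by (1) again. For the converse I argue by contraposition. If $e(\xi)\not\le e(\eta)$ then $e(\xi)(1-e(\eta))\ne 0$, so there is a vector $v\in(1-e(\eta))H$ with $e(\xi)v\ne 0$; let $\zeta\in\mathcal{P}_M^{\natural}$ be the Araki representative of $\omega_v|_M$, so that $e(\zeta)=[M'v]$. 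Since $1-e(\eta)\in M$ and $v\in(1-e(\eta))H$ we get $e(\zeta)\le 1-e(\eta)$, hence $\zeta\perp\eta$ by (1); on the other hand $e(\zeta)\not\le 1-e(\xi)$ because $v\in e(\zeta)H$ and $e(\xi)v\ne 0$, hence $\xi\not\perp\zeta$ by (1). Thus $\zeta$ witnesses the failure of the right-hand condition.

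For (3): write $p=e(\xi)$, so that $q=pJpJ$ is a self-adjoint projection with $q(\mathcal{P}_M^{\natural})\subset\mathcal{P}_M^{\natural}$ by property 3 of a standard form, $xJxJ(\mathcal{P}_M^{\natural})\subset\mathcal{P}_M^{\natural}$. Using $JpJ\in M'$ and the identity $\eta=e(\eta)Je(\eta)J\eta$ one computes $q\eta=pe(\eta)\,J(pe(\eta))J\,\eta$ for every $\eta\in\mathcal{P}_M^{\natural}$. Inclusion $\subset$: for $w\in\mathcal{P}_M^{\natural}$ and $\eta\in\mathcal{P}_M^{\natural}$ with $\eta\perp\xi$, part (1) gives $pe(\eta)=0$, so $q\eta=0$ and $\langle qw,\eta\rangle=\langle w,q\eta\rangle=0$; by self-duality of the cone this says $qw\perp\eta$, so $qw\in(\{\xi\}^{\perp}\cap\mathcal{P}_M^{\natural})^{\perp}\cap\mathcal{P}_M^{\natural}$. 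Inclusion $\supset$: if $v\in\mathcal{P}_M^{\natural}$ lies in $(\{\xi\}^{\perp}\cap\mathcal{P}_M^{\natural})^{\perp}$, then $\xi\perp\zeta\Rightarrow v\perp\zeta$ for all $\zeta\in\mathcal{P}_M^{\natural}$, so part (2) applied with $(v,\xi)$ in place of $(\xi,\eta)$ gives $e(v)\le p$; then $qv=pe(v)\,J(pe(v))J\,v=e(v)Je(v)Jv=v$, so $v=qv\in q(\mathcal{P}_M^{\natural})$. Combining the two inclusions proves (3).

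The step I expect to be the main obstacle is the converse direction of (2): one must realize that the natural-cone representatives of the vector functionals $\omega_v|_M$ are exactly the right test objects, and that the possible failure of a projection of $M$ to occur as some $e(\zeta)$ (only the countably decomposable ones do) is harmless here, since a single well-chosen test vector in the cone suffices. Once (1) and (2) are available, (3) is a purely formal manipulation with $J$, the support projections, and the standard-form axioms.
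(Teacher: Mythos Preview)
Your proof is correct. The approach to (3) is essentially the same as the paper's: both identify the right-hand set with $\{\eta\in\mathcal{P}_M^{\natural}:e(\eta)\le e(\xi)\}$ via (2) and then use $\eta=e(\eta)Je(\eta)J\eta$ to show this coincides with $q(\mathcal{P}_M^{\natural})$; you have merely arranged the two inclusions a bit differently, proving $\subset$ by the self-adjointness computation $\langle qw,\eta\rangle=\langle w,q\eta\rangle=0$ rather than by observing $e(qw)\le p$.

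The genuine difference is in the converse of (2). The paper writes $1-e(\eta)$ as an orthogonal sum $\sum_{i\in I}e(\xi_i)$ of support projections of cone vectors (using that every $\sigma$-finite projection in $M$ is of the form $e(\zeta)$), applies the hypothesis to each $\xi_i$, and concludes $e(\xi)\perp\sum_i e(\xi_i)=1-e(\eta)$. Your contrapositive argument is more economical: a single vector $v\in(1-e(\eta))H$ with $e(\xi)v\ne 0$ already yields, via its cone representative $\zeta$, a witness with $\zeta\perp\eta$ but $\zeta\not\perp\xi$. This sidesteps the maximal-family construction and the auxiliary fact about $\sigma$-finite projections; the paper's route, on the other hand, makes the lattice-theoretic content slightly more explicit. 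Both are short and valid.
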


\begin{proof} (1) $(\Rightarrow )$ is \cite[Theorem 4, (7)]{Araki}, while $(\Leftarrow)$ is clear. (2) and (3) have been known. We include a proof from second named author's master thesis for reader's convenience. \\
(2) $(\Rightarrow )$ follows from (1). 
$(\Leftarrow)$ Let $\{\xi_i\}_{i\in I}$ be a family of elements in $\mathcal{P}_M^{\natural}$ such that $\{e(\xi_i)\}_{i\in I}$ are pairwise orthogonal and $\sum_{i\in I}e(\xi_i)=1-e(\eta)$ (it can be proved that every $\sigma$-finite projection $p\in M$ is of the form $p=e(\eta)$ for some $\eta\in \mathcal{P}_M^{\natural}$). Hence $\xi_i\perp \eta$, which implies  that $\xi_i\perp \xi$ for all $i\in I$. By (1), $e(\xi_i)\perp e(\xi)\ (i\in I)$ holds. Therefore 
\[e(\xi)\le 1-\sum_{i\in I}e(\xi_i)=e(\eta).\]
\\
(3) By (2), it holds that for $\eta\in \mathcal{P}_M^{\natural}$,
\eqa{
e(\eta)\le e(\xi)&\Leftrightarrow \forall \zeta\in \mathcal{P}_M^{\natural}\ (\zeta\perp \xi\Rightarrow \zeta\perp \eta)\\
&\Leftrightarrow \eta\in (\{\xi\}^{\perp}\cap \mathcal{P}_M^{\natural})^{\perp}\cap \mathcal{P}_M^{\natural}.
}
If $e(\eta)\le e(\xi)$, then $e(\xi)\eta=\eta$, so $q\eta=Je(\xi)Je(\xi)\eta=Je(\xi)\eta=J\eta=\eta$. Thus, $\eta=q(\eta)\in q(\mathcal{P}_M^{\natural})$. On the other hand, if $\eta\in q(\mathcal{P}_M^{\natural})$, then clearly $e(\eta)\le e(\xi)$ holds. This shows that $q(\mathcal{P}_M^{\natural})=(\{\xi\}^{\perp}\cap \mathcal{P}_M^{\natural})^{\perp}\cap \mathcal{P}_M^{\natural}$.
\end{proof}

The next result is \cite[Corollary 5.3]{Ozawa}.
\begin{lemma}[\cite{Kirchberg,Ozawa}] \label{lem: Kirchberg reduction}Let $M$ be a von Neumann algebra, and if there is a {\rm{C}}$^*$-algebra $A$ with {\rm{QWEP}} and a contractive linear map $\varphi\colon A\to M$ such that $\varphi(\overline{Ball}(A))$ is ultraweakly dense in $\overline{Ball}(M)$, then $M$ has {\rm{QWEP}}. Here, $\overline{Ball}(B)$ is the closed unit ball of a {\rm{C}}$^*$-algebra $B$.
\end{lemma}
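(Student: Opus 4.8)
The plan is to transport the problem to the biduals, where QWEP is well-behaved, and then to recognize $M$ as the range of a conditional expectation on a QWEP von Neumann algebra, invoking the same permanence properties of QWEP already used in the paper. Since $A$ has QWEP, so does $A^{**}$ by \cite[Corollary 3.3 (v)]{Kirchberg} (the fact already exploited in Lemma \ref{lem: Ocneanu UP has QWEP}). The map $\varphi$ extends to its weak$^{*}$-continuous bitranspose $\varphi^{**}\colon A^{**}\to M^{**}$, which is again contractive, and I would compose it with the canonical normal surjection $\pi_M\colon M^{**}\to M$ splitting the inclusion $M\hookrightarrow M^{**}$ (whose existence uses that $M$ is a von Neumann algebra) to obtain a normal, weak$^{*}$-to-$\sigma$-weakly continuous contraction
\[\Psi:=\pi_M\circ \varphi^{**}\colon A^{**}\longrightarrow M.\]

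Next I would verify that $\Psi$ is a metric surjection, i.e.\ $\Psi(\overline{Ball}(A^{**}))=\overline{Ball}(M)$. By Goldstine's theorem $\overline{Ball}(A)$ is weak$^{*}$-dense in the weak$^{*}$-compact ball $\overline{Ball}(A^{**})$; since $\Psi$ is normal and contractive, $\Psi(\overline{Ball}(A^{**}))$ is a $\sigma$-weakly compact, hence $\sigma$-weakly closed, subset of $\overline{Ball}(M)$ containing $\Psi(\overline{Ball}(A))=\varphi(\overline{Ball}(A))$, which is $\sigma$-weakly dense in $\overline{Ball}(M)$ by hypothesis. Hence $\Psi(\overline{Ball}(A^{**}))=\overline{Ball}(M)$, so $\Psi$ is a normal contractive surjection of the QWEP von Neumann algebra $A^{**}$ onto $M$ carrying the unit ball onto the unit ball.

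The decisive step is to turn this normal metric surjection into a transfer of QWEP. The clean mechanism is Tomiyama's theorem: once one produces a normal $*$-monomorphism $j\colon M\to A^{**}$ splitting $\Psi$ (so that $\Psi\circ j=\mathrm{id}_M$), the idempotent $E:=j\circ\Psi$ is a contractive projection of $A^{**}$ onto the $*$-subalgebra $j(M)$, hence by Tomiyama's theorem a completely positive conditional expectation onto $j(M)\cong M$; and a von Neumann subalgebra of a QWEP algebra that is the range of a conditional expectation is again QWEP — precisely the permanence invoked in the proof of Theorem \ref{thm: QWEP iff closure of injective factors} when QWEP is passed from $R_\infty^{\omega}$ to its expected subalgebras. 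This would complete the argument.

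The main obstacle is exactly the construction of such a splitting, equivalently the complete-order structure underlying it: a merely contractive linear $\varphi$ does not of itself furnish a multiplicative section of $\Psi$, so this is where the substantive Kirchberg--Ozawa input \cite[Corollary 5.3]{Ozawa} is needed, working at the level of the operator-system (matrix-order) structure of the unit ball rather than the bare Banach-space contraction. I would therefore expect the heart of the write-up to be this lifting/expectation step, the bidual reduction above being the routine part; and I note that in the intended application the map $\varphi$ will in fact carry the positivity supplied by the natural cone (Lemma \ref{lem: keylemma}), which is what makes the required complete-order structure available.
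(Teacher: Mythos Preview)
The paper does not prove this lemma; it merely quotes it as \cite[Corollary 5.3]{Ozawa}. So there is no in-paper argument to compare against, and the question is whether your sketch stands on its own.

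Your first two paragraphs --- the bidual reduction to a normal contraction $\Psi=\pi_M\circ\varphi^{**}\colon A^{**}\to M$ with $\Psi(\overline{Ball}(A^{**}))=\overline{Ball}(M)$ via Goldstine and weak$^*$-compactness --- are correct, and this is indeed how Ozawa's proof begins.

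The genuine gap is your ``decisive step.'' You propose to find a $*$-monomorphism $j\colon M\to A^{**}$ with $\Psi\circ j=\mathrm{id}_M$, so that $E=j\circ\Psi$ is a norm-one projection onto the C$^*$-subalgebra $j(M)$ and Tomiyama's theorem applies. But a normal metric surjection between von Neumann algebras has no reason whatsoever to admit a multiplicative section (nor even a bounded linear one), and without $j(M)$ being a C$^*$-subalgebra Tomiyama says nothing. You concede this and then invoke ``the substantive Kirchberg--Ozawa input \cite[Corollary 5.3]{Ozawa}'' --- but that corollary \emph{is} the lemma you are proving, so as written the argument is circular. Your closing remark about the application does not rescue the strategy either: in Lemma~\ref{lem: keylemma} the section $i\colon N\to M$ is only unital \emph{positive}, not a $*$-homomorphism, so $i(N)$ is not a C$^*$-subalgebra of $M$ and Tomiyama is still unavailable.

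Ozawa's actual route after the bidual reduction is different in kind: rather than seeking any section, he appeals to one of Kirchberg's intrinsic characterizations of QWEP (a lifting/tensor-norm criterion against $C^*(\mathbb{F}_\infty)$ that is testable on finite tuples from the unit ball). The ball-onto-ball property of $\Psi$ then lets one pull the required estimates for $M$ back to the QWEP algebra $A^{**}$. If you want to supply a self-contained proof here, that is the mechanism to write out; the conditional-expectation/Tomiyama idea is a red herring for this lemma.
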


\begin{proof}[Proof of Lemma \ref{lem: keylemma}]
Put $e\colon =\rho \rho^*\in \mathbb{B}(H_M)$. \\ \\
\textbf{Claim 1}. $\rho (\mathcal{P}_N^{\natural})=e(\mathcal{P}_M^{\natural})=\mathcal{P}_M^{\natural}\cap e(H_M)$, and $\rho J_N=J_M\rho$.\\
First, $\rho (\mathcal{P}_N^{\natural})=\rho \rho^* \rho (\mathcal{P}_N^{\natural})\subset e(\mathcal{P}_M^{\natural})$ holds. To prove $e(\mathcal{P}_M^{\natural})\subset \rho (\mathcal{P}_N^{\natural})$, we show $\rho^*(\mathcal{P}_M^{\natural})\subset \mathcal{P}_N^{\natural}$. Let $\xi\in \mathcal{P}_M^{\natural}$. Then for each $\eta\in \mathcal{P}_N^{\natural}$,
we have $\nai{\rho^*(\xi)}{\eta}=\nai{\xi}{\rho (\eta)}\ge 0$, because $\rho (\eta)\in \mathcal{P}_M^{\natural}$, whence $\rho^*(\xi)\in (\mathcal{P}_N^{\natural})^0=\mathcal{P}_N^{\natural}$ and $\rho^*(\mathcal{P}_M^{\natural})\subset \mathcal{P}_N^{\natural}$ holds. Therefore for $\xi \in \mathcal{P}_M^{\natural}$, $e\xi=\rho (\rho^*\xi)\in \rho \mathcal{P}_N^{\natural}$. This proves $\rho (\mathcal{P}_N^{\natural})=e\mathcal{P}_M^{\natural}$. Note that this also shows that $e(\mathcal{P}_M^{\natural})\subset \mathcal{P}_M^{\natural}$. Therefore $e(\mathcal{P}_M^{\natural})=\mathcal{P}_M^{\natural}\cap e(H_M)$ holds too. To see that last equality, recall that any $\xi\in H_N$ can be written as 
$\xi=(\xi_1-\xi_2)+i(\xi_3-\xi_4)$, where $\xi_j\in \mathcal{P}_N^{\natural}$ and $J_N\xi_j=\xi_j\ (1\le j\le 4)$. From this it holds that
\eqa{
\rho J_N\xi &=\rho \left \{(\xi_1-\xi_2)-i(\xi_3-\xi_4)\right \}=J_M\left \{(\rho \xi_1-\rho \xi_2)+i(\rho \xi_3-\rho \xi_4)\right \}\\
&=J_M\rho \xi.
}
Now fix $\psi \in S_{\rm{nf}}(N)$. Then $\psi=\omega_{\xi_{\psi}}, \xi_{\psi}\in \mathcal{P}_N^{\natural}$. Let $\varphi:=\omega_{\eta}, \eta:=\rho(\xi_{\psi})\in \mathcal{P}_M^{\natural}$, and $p:=\text{supp}(\varphi)\in M.$\\ \\ 
\textbf{Claim 2.} $q:=pJ_MpJ_M\ge e$.\\
Since $H_M$ is spanned by $\mathcal{P}_M^{\natural}$, it suffices to show that $q(\mathcal{P}_M^{\natural})\supset e(\mathcal{P}_M^{\natural})=\rho(\mathcal{P}_N^{\natural})$. 
Let $\xi\in \mathcal{P}_N^{\natural}$. Let $\zeta \in \{\rho(\xi_{\psi})\}^{\perp}\cap \mathcal{P}_M^{\natural}$. 
Then $0=\nai{\zeta}{\rho(\xi_{\psi})}=\nai{\rho^*(\zeta)}{\xi_{\psi}}$. 
Since $\psi=\omega_{\xi_{\psi}}$ is faithful, by Lemma \ref{lem: appendix about natural cone} (2), $\zeta'\perp \xi_{\psi}\Rightarrow \zeta' \perp \xi$ holds for every $\zeta'\in \mathcal{P}_N^{\natural}$. 
Therefore $\rho^*(\zeta)\perp \xi\Leftrightarrow \zeta\perp \rho(\xi)$ holds. This implies, by Lemma \ref{lem: appendix about natural cone} (3), that 
\[\rho(\xi)\in (\{\rho(\xi_{\psi})\}^{\perp}\cap \mathcal{P}_M^{\natural})^{\perp}\cap \mathcal{P}_M^{\natural}=q(\mathcal{P}_M^{\natural}).\]
Therefore $e(\mathcal{P}_M^{\natural})=\rho(\mathcal{P}_N^{\natural})\subset q(\mathcal{P}_M^{\natural})$, and $e\le q$ holds. 

Since $M$ has QWEP, so does $qMq$. Therefore we may replace $(M,H_M,J_M,\mathcal{P}_M^{\natural})$ by $(qMq, qH_M,J_M|_{qH_M}, q(\mathcal{P}_M^{\natural}))$ and assume that $q=1$, and $\varphi$ is faithful on $M$ with $\xi_{\varphi}=\rho(\xi_{\psi})\in \mathcal{P}_M^{\natural}$. In this case we may identify $\mathcal{P}_M^{\natural}=\overline{\Delta_{\varphi}^{\frac{1}{4}}M_+\xi_{\varphi}},\ \mathcal{P}_N^{\natural}=\overline{\Delta_{\psi}^{\frac{1}{4}}N_+\xi_{\psi}}$.\\ \\
\textbf{Claim 3.} There are unital positive maps $i\colon N\to M,\ \varepsilon\colon M\to N$ such that the following diagram commutes.
\[
\xymatrix{
N_+\ar@{}[rd]|{\circlearrowright}\ar[d]_{\beta}\ar[r]^i & M_+\ar@{}[rd]|{\circlearrowright}\ar[d]^{\alpha}\ar[r]^{\varepsilon}& N_+\ar[d]^{\beta}\\
\mathcal{P}_N^{\natural}\ar[r]_{\rho}&\mathcal{P}_M^{\natural}\ar[r]_{\rho^*}&\mathcal{P}_N^{\natural}
}
\]
where $\alpha\colon M_+\ni x\mapsto \Delta_{\varphi}^{\frac{1}{4}}x\xi_{\varphi}\in \mathcal{P}_M^{\natural},\ \beta\colon N_+\ni y\mapsto \Delta_{\psi}^{\frac{1}{4}}y\xi_{\psi}\in \mathcal{P}_N^{\natural}$. 
To construct $i$, let $y\in N_{\rm{sa}}$. Then as $-\|y\|1\le y\le \|y\|1$ in $N$, Lemma \ref{lem: order isomorphism of positive cone} asserts that $-\|y\| \xi_{\psi}\le \Delta_{\psi}^{\frac{1}{4}}y\xi_{\psi}\le \|y\|\xi_{\psi}$ in $H_{N,{\rm{sa}}}=\{\xi\in H_N;\ J_N\xi=\xi\}$. Therefore as $\rho$ preserves the order, $-\rho(\|y\|\xi_{\psi})=-\|y\|\xi_{\varphi}\le \rho(\Delta_{\psi}^{\frac{1}{4}}y\xi_{\psi})\le \|y\|\xi_{\varphi}$ in $H_{M,{\rm{sa}}}$. Again by Lemma \ref{lem: order isomorphism of positive cone} applied to $H_{M,{\rm{sa}}}$, there exists unique $\theta (y)\in M_{\rm{sa}}$ with $-\|y\|1\le \theta (y)\le \|y\|1$ in $M$, such that 
\begin{equation}
\rho(\Delta_{\psi}^{\frac{1}{4}}y\xi_{\psi})=\Delta_{\varphi}^{\frac{1}{4}}\theta(y)\xi_{\varphi}.\label{eq: definition of theta}
\end{equation} 
Then by the form of (\ref{eq: definition of theta}), the linearity of $\rho$ asserts that $\theta\colon N_{\rm{sa}}\to M_{\rm{sa}}$ is a real linear map. It is also clear by the construction that $\theta(N_+)\subset M_+$ and $\theta(1)=1$. Therefore $\theta$ can be extended to a complex linear unital positive map $i\colon N\to M$ by
\[i(y):=\theta(y_1)+i\theta(y_2),\ \ \ \ \ \ y=y_1+iy_2,\ \ \ y_1,y_2\in N_{\rm{sa}}.\]
Similarly, we define a unital positive map $\varepsilon\colon M\to N$ by
\[\varepsilon(x):=\pi(x_1)+i\pi(x_2),\ \ \ \ \ \ x=x_1+ix_2,\ \ \ x_1,x_2\in M_{\rm{sa}}\]
 where for $x\in M_{\rm{sa}}$, $\pi(x)\in N_{\rm{sa}}$ is the unique element satisfying 
\[\rho^*(\Delta_{\varphi}^{\frac{1}{4}}x\xi_{\varphi})=\Delta_{\psi}^{\frac{1}{4}}\pi(x)\xi_{\psi}.\]
Note that since $\rho^*(\mathcal{P}_M^{\natural})\subset \mathcal{P}_N^{\natural}$, $\pi(x)$ is well-defined by Lemma \ref{lem: order isomorphism of positive cone}. By construction, the above diagram commutes. Finally, let $y\in N_+$. Then by construction, $\rho(\Delta_{\psi}^{\frac{1}{4}}y\xi_{\psi})=\Delta_{\varphi}^{\frac{1}{4}}i(y)\xi_{\varphi}$ holds. Then we apply $\rho^*$ to obtain $\Delta_{\psi}^{\frac{1}{4}}y\xi_{\psi}=\rho^*(\Delta_{\varphi}^{\frac{1}{4}}i(y)\xi_{\varphi})$. Therefore by the construction of $\varepsilon$, $\varepsilon\circ i(y)=y$ holds. Therefore by linearity, $\varepsilon\circ i={\rm{id}}_N$. In particular, $\varepsilon$ maps the closed unit ball of $M$ onto the closed unit ball of $N$. Since $\varepsilon$ is positive hence contractive, and since $M$ has QWEP, $N$ also has QWEP by Lemma \ref{lem: Kirchberg reduction}. 
\end{proof}
\begin{proof}[Proof of Theorem \ref{thm: QWEP via approximation of standard form}]
(1)$\Rightarrow$(2) Assume that $M$ has QWEP. then by Theorem \ref{thm: QWEP iff closure of injective factors}, $M\in \overline{\mathcal{F}_{\rm{inj}}}$ holds. 
By Theorem \ref{UW4.1.4} (i)$\Leftrightarrow $(iv), there exist
$\{k_n\}_{n=1}^{\infty}\subset \mathbb{N}$, a sequence $\varphi_n\in S_{\rm{nf}}(M_{k_n}(\mathbb{C}))$ such that $M$ admits an embedding $i\colon M\to (M_{k_n}(\mathbb{C}),\varphi_n)^{\omega}$ and a normal faithful conditional expectation $\varepsilon\colon (M_{k_n}(\mathbb{C}),\varphi_n)^{\omega}\to i(M)$. 
Let $(M_{k_n}(\mathbb{C}),K_n,J_n,\mathcal{P}_n^{\natural})$ be the standard form constructed from the GNS representation of $\varphi_n$ for each $n\in \mathbb{N}$. Then by \cite[Theorem 3.18]{AndoHaagerup}, $(\prod^{\omega}M_{k_n}(\mathbb{C}),K_{\omega},J_{\omega},\mathcal{P}_{\omega}^{\natural})$ is a standard form, where $(K_{\omega},J_{\omega},\mathcal{P}_{\omega}^{\natural})$ is the ultraproduct of $(K_n,J_n,\mathcal{P}_n^{\natural})_n$ (see \cite[Theorem 3.18]{AndoHaagerup}). Let $p=\text{supp}((\varphi_n)_{\omega})\in \prod^{\omega}M_{k_n}(\mathbb{C})$, and $q:=pJ_{\omega}pJ_{\omega}$. Then by \cite[Corollaries 3.27 and 3.28]{AndoHaagerup}, $(q(\prod^{\omega}M_{k_n}(\mathbb{C}))q,qK_{\omega},J_{\omega}|_{qK_{\omega}},q\mathcal{P}_{\omega}^{\natural})$ can be regarded as a standard representation of $\widetilde{M}=(M_{k_n}(\mathbb{C}),\varphi_n)^{\omega}$. Recall also that the natural cone $\mathcal{P}_n^{\natural}$ can be also regarded as the one constructed from the GNS representation of the trace $\text{tr}_{k_n}$, we see that $\mathcal{P}_n^{\natural}$ satisfies conditon (2) for each $n$. Then $\mathcal{P}_{\omega}^{\natural}$ also satisfies condition (2): let $\varepsilon>0, m\in \mathbb{N}$ and $\xi_1=(\xi_{1,n})_{\omega},\cdots, \xi_m=(\xi_{m,n})_{\omega}\in \mathcal{P}_{\omega}^{\natural}$ be given, where $\xi_{i,n}\in \mathcal{P}_n^{\natural}\ (1\le i\le m,n\in \mathbb{N})$. Then we have 
\[\nai{\xi_i}{\xi_j}=\lim_{n\to \omega}\nai{\xi_{i,n}}{\xi_{j,n}}=\lim_{n\to \omega}\text{tr}_{k_n}(\xi_{i,n}\xi_{j,n})\ (1\le i,j\le m),\]
so that 
\[J:=\{n\in \mathbb{N};\ |\nai{\xi_i}{\xi_j}-\text{tr}_{k_n}(\xi_{i,n}\xi_{j,n})|<\varepsilon,\ 1\le i,j\le m\}\in \omega.\]
Choose $n_0\in J$, and put $\eta_i:=\xi_{i,n_0}\ (1\le i\le m)$. Then the inequality in (2) is satisfied.\ But as $q(\mathcal{P}_{\omega}^{\natural})\subset \mathcal{P}_{\omega}^{\natural}$, $q(\mathcal{P}_{\omega}^{\natural})$ also satisfies (2). Furthermore, $i(M)\subset \widetilde{M}$ and we have a normal faithful conditional expectation $\varepsilon\colon \widetilde{M}\to i(M)$. Thus choosing $\psi\in S_{\rm{nf}}(i(M))$ and letting $\widetilde{\psi}:=\psi\circ \varepsilon$, we obtain 
\[\mathcal{P}_M^{\natural}\cong \mathcal{P}_{i(M)}^{\natural}=\overline{\Delta_{\psi}^{\frac{1}{4}}i(M)_+\xi_{\psi}}\subset \mathcal{P}_{\widetilde{M}}^{\natural}=
\overline{\Delta_{\widetilde{\psi}}^{\frac{1}{4}}\widetilde{M}_+\xi_{\widetilde{\psi}}}\cong q(\mathcal{P}_{\omega}^{\natural}).\]
Here, $\cong$ means that the one natural cone is mapped to the other by a unitary implementing the isomorphism of a standard form. Therefore $\mathcal{P}_M^{\natural}$ satisfies condition (2).\\
(2)$\Rightarrow$(1) Assume $M$ satisfies (2). Let $\{\xi_n\}_{n=1}^{\infty}\subset \mathcal{P}_M^{\natural}$ be a dense sequence. Using condition (2), for each $n\in \mathbb{N}$, choose $k_n\in \mathbb{N}$ and $a_1^{(n)},\cdots,a_n^{(n)}\in M_{k_n}(\mathbb{C})_+$ such that 
\[|\nai{\xi_i}{\xi_j}-\text{tr}_{k_n}(a_i^{(n)}a_j^{(n)})|<\frac{1}{n},\ \ \ \ 1\le i,j\le n.\]
Put 
\[ \xi_i:=\begin{cases}a_i^{(n)}\xi_{\text{tr}_{k_n}}\in \mathcal{P}_n^{\natural}:=\mathcal{P}_{M_{k_n}(\mathbb{C})}^{\natural} & (n\ge i)\\
\ \ \ \ \ \ \ \ \ \ \ \ \ \ 0 & (n<i)
\end{cases},
\]
and $\widetilde{\xi}_i:=(\xi_i^{(n)})_{\omega}\in \mathcal{P}_{\omega}^{\natural}:=(\mathcal{P}_n^{\natural})_{\omega}$. Then
\[\nai{\widetilde{\xi}_i}{\widetilde{\xi}_j}=\lim_{n\to \omega}\nai{\xi_i^{(n)}}{\xi_j^{(n)}}=\nai{\xi_i}{\xi_j},\ \ \ \ i,j\in \mathbb{N}.\]
Fix one $\varphi \in S_{{\rm{nf}}}(M)$. Then on the dense subspace $K_0:=\text{span}\{\xi_i;i\ge 1\}$ of $L^2(M,\varphi)$, define $\rho_0\colon K_0\to H_{\omega}$ by 
\[\rho_0\left (\sum_{j=1}^k\lambda_{i_j}\xi_{i_j}\right ):=\sum_{j=1}^{k}\lambda_{i_j}\widetilde{\xi}_{i_j},\ \ \ \ \lambda_{i_j}\in \mathbb{C}\ (1\le i\le k).\]
 Then $\rho_0$ is uniquely extended to a linear isometry $\rho\colon L^2(M,\varphi)\to H_{\omega}$ such that $\rho(\mathcal{P}_M^{\natural})\subset \mathcal{P}_{\omega}^{\natural}$. Since $\widetilde{M}:=\prod^{\omega}M_{k_n}(\mathbb{C})$ has QWEP with natural cone $\mathcal{P}_{\omega}^{\natural}$, $M$ has QWEP thanks to Lemma \ref{lem: keylemma}.   
\end{proof}
As a corollary to Theorems \ref{UW4.1.4}, \ref{thm: QWEP iff closure of injective factors} and \ref{thm: QWEP via approximation of standard form}, we have the following:
\begin{theorem}\label{UW4.1.5}
Let $H$ be a separable infinite-dimensional Hilbert space, and let $0<\lambda<1$. The following conditions are equivalent:
\begin{list}{}{}
\item[{\rm{(a)}}] $\mathcal{F}_{\rm{inj}}$ is dense in {\rm{vN}}$(H)$.
\item[{\rm{(b)}}] For any $N\in {\rm{vN}}(H)$, there is an embedding $i\colon N\to R_{\infty}^{\omega}$ and a normal faithful conditional expectation $\varepsilon\colon R_{\infty}^{\omega}\to i(N)$.
\item[{\rm{(c)}}] For any $N\in {\rm{vN}}(H)$, there is an embedding $i\colon N\to R_{\lambda}^{\omega}$ and a normal faithful
 conditional expectation $\varepsilon\colon R_{\lambda}^{\omega}\to i(N)$.
\item[{\rm{(d)}}] For any $N\in \mathcal{F}_{{\rm{II}}_1}$, there is an embedding $i\colon N\to R^{\omega}$. 
\item[{\rm{(e)}}] Every $N\in {\rm{vN}}(H)$ has {\rm{QWEP}}.
\item[{\rm{(f)}}] For any $N\in {\rm{vN}}(H)$, there is $\{n_k\}_{k=1}^{\infty}\subset \mathbb{N}$, a sequence $\varphi_k\in S_{\rm{nf}}(M_{n_k}(\mathbb{C}))$ such that $N$ admits an embedding $i\colon N\to (M_{n_k}(\mathbb{C}),\varphi_k)^{\omega}$ and a normal faithful conditional expectation $\varepsilon\colon (M_{n_k}(\mathbb{C}),\varphi_k)^{\omega}\to i(N)$.
\item[{\rm{(g)}}] For any $N\in {\rm{vN}}(H)$, $\varepsilon>0$, $n\in \mathbb{N}$ and $\xi_1,\cdots,\xi_n\in \mathcal{P}_N^{\natural}$, there exist $k\in \mathbb{N}$ and $a_1,\cdots,a_n\in M_k(\mathbb{C})_+$ such that 
\[|\nai{\xi_i}{\xi_j}-{\rm{tr}}_k(a_ia_j)|<\varepsilon\ \ \ \ \ \ (1\le i,j\le n)\]
holds, where $\mathcal{P}_N^{\natural}$ is the natural cone in the standard form of $N$. 
\end{list}
\end{theorem}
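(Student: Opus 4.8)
The plan is to deduce Theorem \ref{UW4.1.5} from the ``pointwise'' results already proved in this paper: Theorems \ref{UW4.1.4}, \ref{thm: QWEP iff closure of injective factors} and \ref{thm: QWEP via approximation of standard form} each assert the equivalence, for a \emph{fixed} $N\in{\rm vN}(H)$, of a subset of the conditions in play, and one simply quantifies those equivalences over all $N\in{\rm vN}(H)$. The only conceptual point is how condition (d), phrased via the tracial ultrapower $R^{\omega}$ rather than via $\overline{\mathcal{F}_{\rm inj}}$, QWEP, or the ultrapowers $R_{\infty}^{\omega}$, $R_{\lambda}^{\omega}$, $(M_{k_n}(\mathbb{C}),\varphi_n)^{\omega}$, is woven into the chain.

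First I would record the tautology that $\mathcal{F}_{\rm inj}$ is dense in ${\rm vN}(H)$ if and only if $\overline{\mathcal{F}_{\rm inj}}={\rm vN}(H)$, i.e.\ if and only if \emph{every} $N\in{\rm vN}(H)$ lies in $\overline{\mathcal{F}_{\rm inj}}$; and I would note that, since $H$ is separable, every $N\in{\rm vN}(H)$ automatically has separable predual, so all the cited theorems apply to it. Applying Theorem \ref{UW4.1.4} to an arbitrary fixed $N$ shows that ``$N\in\overline{\mathcal{F}_{\rm inj}}$'' is equivalent to the corresponding $N$-instances of (b), (c) and (f); applying Theorem \ref{thm: QWEP iff closure of injective factors} shows it is equivalent to ``$N$ has QWEP''; and applying Theorem \ref{thm: QWEP via approximation of standard form} shows ``$N$ has QWEP'' is equivalent to the $N$-instance of (g). Quantifying all of these equivalences over $N\in{\rm vN}(H)$ yields at once the equivalence of (a), (b), (c), (e), (f) and (g).

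It remains to insert (d). For (a)$\Rightarrow$(d): assuming (a), every $N\in\mathcal{F}_{{\rm II}_1}$ lies in $\overline{\mathcal{F}_{\rm inj}}$, hence embeds into $R^{\omega}$ by \cite[Theorem 5.8]{HW2} (and any II$_1$ factor with separable predual can be realized on $H$, so this accounts for all of them). For (d)$\Rightarrow$(e): condition (d) is precisely the Connes embedding conjecture, which by Kirchberg \cite{Kirchberg} is equivalent to the QWEP conjecture; in particular every von Neumann algebra, \emph{a fortiori} every $N\in{\rm vN}(H)$, has QWEP, which is (e). Since (e)$\Rightarrow$(a) was just established, this places (d) inside the cycle and finishes the proof. (If one prefers to avoid quoting \cite{Kirchberg} for (d)$\Rightarrow$(e), one can argue within the present framework instead: from (d) and Lemma \ref{lem: Ocneanu UP has QWEP} every finite von Neumann algebra with separable predual has QWEP, and then the crossed-product reduction of \cite{HJX} used in the proof of Theorem \ref{thm: QWEP iff closure of injective factors}, together with Lemma \ref{lem: Kirchberg reduction}, propagates QWEP to an arbitrary $N\in{\rm vN}(H)$.)

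Since every nontrivial input has already been proved, there is no genuine obstacle here; the argument is essentially bookkeeping, and the main care needed is to check that the ``separable predual'' hypotheses are met and that the pointwise theorems are being quoted in the right direction. The one step that reaches outside the paper is (d)$\Rightarrow$(e): the pointwise theorems developed here never mention the tracial ultrapower $R^{\omega}$, so to link (d) to the remaining conditions one must invoke Kirchberg's original equivalence between the Connes embedding conjecture and the QWEP conjecture (or re-run the reduction arguments already available in the paper).
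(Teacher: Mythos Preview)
Your proposal is correct and matches the paper's approach: the paper states Theorem \ref{UW4.1.5} as an immediate corollary of Theorems \ref{UW4.1.4}, \ref{thm: QWEP iff closure of injective factors} and \ref{thm: QWEP via approximation of standard form} with no further proof, and your write-up simply makes explicit the pointwise quantification and the link to (d) via \cite[Theorem 5.8]{HW2} and Kirchberg's equivalence already recalled in the introduction.
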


\section*{Acknowledgements}
The authors thank Masaki Izumi for useful comments and informations about literature. HA is supported by EPDI/JSPS/IH\'ES Fellowship. UH is supported by ERC Advanced Grant No. OAFPG 27731 and the Danish National Research Foundation through the Center for Symmetry and Deformation.


Hiroshi Ando\\
Institut des Hautes \'Etudes Scientifiques,\\
Le Bois-Marie 35, route de Chartres,\\
 91440 Bures-sur-Yvette, France\\
ando@ihes.fr 
\\ \\
Uffe Haagerup\\
Department of Mathematical Sciences,\\
University of Copenhagen\\
Universitetsparken 5,\\
2100 Copenhagen \O, Denmark\\
haagerup@math.ku.dk\\ \\
Carl Winsl\o w\\
Department of Science Education,\\
University of Copenhagen\\
\O ster Voldgade 3,\\ 
 1350 Copenhagen C, Denmark\\
 winslow@ind.ku.dk
\end{document}